\newtheorem{assumption}{$Assumption$}
\newtheorem{lemma}{$Lemma$}
\newtheorem{theorem}{$Theorem$}
\newtheorem{definition}{$Definition$}
\newtheorem{corollary}{$Corollary$}
\definecolor{red1}{RGB}{192,0,0}
\definecolor{blue1}{RGB}{38,72,174}
\definecolor{purple1}{RGB}{112,48,160}
\DeclareMathOperator*{\argmin}{argmin}  % in your preamble 
\DeclareMathOperator*{\argmax}{argmax}  % in your preamble 
\begin{document}
%
% paper title
% Titles are generally capitalized except for words such as a, an, and, as,
% at, but, by, for, in, nor, of, on, or, the, to and up, which are usually
% not capitalized unless they are the first or last word of the title.
% Linebreaks \\ can be used within to get better formatting as desired.
% Do not put math or special symbols in the title.
%\title{Local Estimate Sequence in Accelerated Distributed Optimization over Networks}
%\title{Accelerating Distributed Optimization via Local Estimate Sequence Technique}
\title{
	%Rate and robustness analysis of accelerated distributed dual averaging
Accelerated Dual Averaging Methods for Decentralized Constrained Optimization
%	Decentralized Constrained Optimization via Accelerated Dual Averaging
}
%Accelerated Decentralized Dual Averaging for Smooth Constrained Optimization
%
%
% author names and IEEE memberships
% note positions of commas and nonbreaking spaces ( ~ ) LaTeX will not break
% a structure at a ~ so this keeps an author's name from being broken across
% two lines.
% use \thanks{} to gain access to the first footnote area
% a separate \thanks must be used for each paragraph as LaTeX2e's \thanks
% was not built to handle multiple paragraphs
%

%\author{Michael~Shell,~\IEEEmembership{Member,~IEEE,}
%        John~Doe,~\IEEEmembership{Fellow,~OSA,}
%        and~Jane~Doe,~\IEEEmembership{Life~Fellow,~IEEE}% <-this % stops a space
%\thanks{M. Shell was with the Department
%of Electrical and Computer Engineering, Georgia Institute of Technology, Atlanta,
%GA, 30332 USA e-mail: (see http://www.michaelshell.org/contact.html).}% <-this % stops a space
%\thanks{J. Doe and J. Doe are with Anonymous University.}% <-this % stops a space
%\thanks{Manuscript received April 19, 2005; revised August 26, 2015.}}

\author{Changxin Liu, \IEEEmembership{Member, IEEE},
%	Huiping Li, \IEEEmembership{Member, IEEE},
%	and 
	Yang Shi, \IEEEmembership{Fellow, IEEE},
	Huiping Li, \IEEEmembership{Senior Member, IEEE}, 
	and Wenli Du
	%	and 
	%%\thanks{This paragraph of the first footnote will contain the date on 
	%%which you submitted your brief for review. It will also contain support 
	%%information, including sponsor and financial support acknowledgment. For 
	%%example, ``This work was supported in part by the U.S. Department of 
	%%Commerce under Grant BS123456.'' }
		\thanks{This work was supported in part by the Natural Sciences and Engineering Research Council of Canada, in part by the National Natural Science Foundation of China (NSFC) under Grant 61922068, and in part by the Shaanxi Provincial Funds for Distinguished Young Scientists under Grant 2019JC-14x. \textit{(Corresponding author: Yang Shi.)} }
	\thanks{C. Liu is with the Key Laboratory of Smart Manufacturing in Energy Chemical Process, Ministry of Education, East China University of Science and Technology, Shanghai, 200237, and also with the Department of Mechanical Engineering, University of Victoria, Victoria, BC V8W 3P6, Canada (e-mail: chxliu@uvic.ca).}
	\thanks{Y. Shi is with the Department of Mechanical Engineering, University of Victoria, Victoria, BC V8W 3P6, Canada (e-mail: yshi@uvic.ca).}
	\thanks{H. Li is with the School of Marine Science and Technology, Northwestern Polytechnical University, Xi'an, 710072, China (e-mail: lihuiping@nwpu.edu.cn).}
		\thanks{W. Du is with the Key Laboratory of Smart Manufacturing in Energy Chemical Process, Ministry of Education, East China University of Science and Technology, Shanghai, 200237, China (e-mail: wldu@ecust.edu.cn).}
}

% note the % following the last \IEEEmembership and also \thanks - 
% these prevent an unwanted space from occurring between the last author name
% and the end of the author line. i.e., if you had this:
% 
% \author{....lastname \thanks{...} \thanks{...} }
%                     ^------------^------------^----Do not want these spaces!
%
% a space would be appended to the last name and could cause every name on that
% line to be shifted left slightly. This is one of those "LaTeX things". For
% instance, "\textbf{A} \textbf{B}" will typeset as "A B" not "AB". To get
% "AB" then you have to do: "\textbf{A}\textbf{B}"
% \thanks is no different in this regard, so shield the last } of each \thanks
% that ends a line with a % and do not let a space in before the next \thanks.
% Spaces after \IEEEmembership other than the last one are OK (and needed) as
% you are supposed to have spaces between the names. For what it is worth,
% this is a minor point as most people would not even notice if the said evil
% space somehow managed to creep in.

% The paper headers
%\markboth{Journal of \LaTeX\ Class Files,~Vol.~14, No.~8, August~2015}%
\markboth{ }
{Shell \MakeLowercase{\textit{et al.}}: Bare Demo of IEEEtran.cls for IEEE Journals}
% The only time the second header will appear is for the odd numbered pages
% after the title page when using the twoside option.
% 
% *** Note that you probably will NOT want to include the author's ***
% *** name in the headers of peer review papers.                   ***
% You can use \ifCLASSOPTIONpeerreview for conditional compilation here if
% you desire.

% If you want to put a publisher's ID mark on the page you can do it like
% this:
%\IEEEpubid{0000--0000/00\$00.00~\copyright~2015 IEEE}
% Remember, if you use this you must call \IEEEpubidadjcol in the second
% column for its text to clear the IEEEpubid mark.

% use for special paper notices
%\IEEEspecialpapernotice{(Invited Paper)}

% make the title area
\maketitle

% As a general rule, do not put math, special symbols or citations
% in the abstract or keywords.
{
\begin{abstract}
%Recently, decentralized dual averaging has demonstrated its superiority in handling constraints and dynamic networks in multiagent optimization. 

In this work, we study decentralized convex constrained optimization problems in networks. 
We focus on the dual averaging-based algorithmic framework that is well-documented to be superior in handling constraints and complex communication environments simultaneously.
Two new decentralized dual averaging (DDA) algorithms are proposed.
In the first one, a second-order dynamic average consensus protocol is tailored for DDA-type algorithms, which equips each agent with a provably more accurate estimate of the global dual variable than conventional schemes. 
We rigorously prove that the proposed algorithm attains $\mathcal{O}(1/t)$ convergence for general convex and smooth problems, for which existing DDA methods were only known to converge at $\mathcal{O}(1/\sqrt{t})$ prior to our work.
In the second one, we use the extrapolation technique to accelerate the convergence of DDA. Compared to existing accelerated algorithms, where typically two different variables are exchanged among agents at each time, the proposed algorithm only seeks consensus on local gradients. 
Then, the extrapolation is performed based on two sequences of primal variables which are determined by the accumulations of gradients at two consecutive time instants, respectively. The algorithm is proved to converge at $\mathcal{O}(1)\left(\frac{1}{t^2}+\frac{1}{t(1-\beta)^2}\right)$, where $\beta$ denotes the second largest singular value of the mixing matrix. We remark that the condition for the algorithmic parameter to guarantee convergence does not rely on the spectrum of the mixing matrix, making itself easy to satisfy in practice. 
Finally, numerical results are presented to demonstrate the efficiency of the proposed methods.

\end{abstract}}

% Note that keywords are not normally used for peerreview papers.
\begin{IEEEkeywords}
Decentralized optimization, constrained optimization, dual averaging, acceleration, multi-agent system.
\end{IEEEkeywords}

% For peer review papers, you can put extra information on the cover
% page as needed:
% \ifCLASSOPTIONpeerreview
% \begin{center} \bfseries EDICS Category: 3-BBND \end{center}
% \fi
%
% For peerreview papers, this IEEEtran command inserts a page break and
% creates the second title. It will be ignored for other modes.
\IEEEpeerreviewmaketitle

\section{Introduction}
%This document is a template for \LaTeXe. If you are reading a paper or
%PDF version of this document, please download the electronic file
%\texttt{ifacconf.tex}. You will also need the class file
%\texttt{ifacconf.cls}. Both files are available on the IFAC web site.
%
%Please stick to the format defined by the \texttt{ifacconf} class, and
%do not change the margins or the general layout of the paper. It
%is especially important that you do not put any running header/footer
%or page number in the submitted paper.\footnote{
%This is the default for the provided class file.}
%Use \emph{italics} for emphasis; do not underline.
%
%Page limits may vary from conference to conference. Please observe the 
%page limits of the event for which your paper is intended.
Consider a multi-agent system consisting of $n$ agents. Each agent holds a private objective function. {They are connected via a communication network in order to collaboratively solve the following optimization problem:
\begin{equation} \label{constrained_optimization}
	\min_{x\in\mathcal{X}} \left\{  f(x):=\frac{1}{n}\sum_{i=1}^{n}f_i(x)\right\}
\end{equation}
where $f_i$ 
represents the local smooth objective function of agent $i$
and $\mathcal{X}\subseteq\mathbb{R}^m$ denotes the constraint set shared by all the agents.} This problem is referred to as decentralized optimization in the literature and finds broad applications in optimal control of cyber-physical systems, sensor networks, and machine learning, to name a few. 
For an overview of decentralized optimization and its applications, please refer to \cite{nedic2018network,yang2019survey}.

%Decentralized optimization has received increasing attention recently due to its fundamental role in engineering design, especially of modern cyber-physical systems such as federated learning and multi-agent control systems. The problem therein usually can be cast as a group of spatially separated nodes cooperatively optimizing the sum of their local objective functions with only peer-to-peer communication. For a detailed overview of decentralized optimization and its various applications, please refer to \cite{nedic2018network,yang2019survey}.

%We consider the problem where a team of agents connected via a network manage to optimize the sum of their local interests while respecting certain common constraints. This problem is referred to as distributed optimization, and has been extensively investigated in recent years mainly due to its broad applications. For example, distributed machine learning, formation control of autonomous vehicles, and sensor fusion can be cast as optimization problems of this type. For a recent overview of distributed optimization, please refer to \cite{nedic2018network}.

%Depending on whether or not constraints can be handled, the algorithms in the literature can be classified into two categories. 
{Over the last decade, many decentralized optimization algorithms have been proposed for solving Problem \eqref{constrained_optimization}.
{\bf For unconstrained problems}, i.e., $\mathcal{X}= \mathbb{ R}^m$, the authors in} \cite{yuan2016convergence,nedic2009distributed} developed decentralized gradient descent (DGD) methods with constant step sizes, where the local search performed by individual agents is guided by local gradients and a consensus protocol. However, because each individual gradient evaluated at the global optimum is not necessarily zero, the search directions induced by consensus-seeking and local gradient may conflict with each other, making it difficult to ascertain the exact solution to the problem.
% that is, there exists a gap between the accumulation point and the global optimum. 
 Several efforts have been made to overcome this drawback. For example, the authors in \cite{shi2015extra} proposed the EXTRA algorithm that adds a cumulative correction term to DGD to achieve consensual optimization.  
 Alternatively, the additional gradient-tracking process based on dynamic average consensus scheme in \cite{zhu2010discrete} can be used. 
 It is shown in \cite{varagnolo2015newton,qu2017harnessing} that, for unconstrained smooth optimization, the algorithms steered by the tracked gradient exactly {converge} at an $\mathcal{O}({1}/{{t}})$ rate. 
{Based on this idea, a decentralized Nesterov gradient descent was proposed in \cite{qu2019accelerated}, where the rate of convergence is accelerated to $\mathcal{O}(1/t^{1.4-\epsilon})$ for any $\epsilon\in(0,1.4)$ at the expense of exchanging an additional variable among agents at each time instant.}
{In \cite{jakovetic2014fast}, the authors proposed an accelerated decentralized algorithm with multiple consensus rounds  at each time instant, and proved that after $t$ local iterations and $\mathcal{O}(t\log t)$ communication rounds the objective error is bounded by $\mathcal{O}(1/t^2)$}.
% Based on this idea, the accelerated decentralized Nesterov gradient descent was reported in \cite{qu2019accelerated} to accelerate the rate of convergence to $\mathcal{O}({1}/{t^{1.4-\epsilon}})$ for some $\epsilon\in(0,1.4)$ at the expense of another communication round. 
 By modeling Problem \eqref{constrained_optimization} as a linearly constrained optimization problem, centralized primal-dual paradigms such as the augmented Lagrangian method (ALM), the alternating direction method of multipliers (ADMM) and the dual ascent can also be used to design decentralized algorithms \cite{jakovetic2018unification,shi2014linear,uribe2021dual}. 
%{In particular, the authors in \cite{uribe2020dual} developed a modified dual formulation whose solution solves the original problem with prescribed accuracy for non-strongly convex and smooth objectives. In doing so, the primal objective becomes strongly convex, which leads to better theoretical results.}
 {Based on the primal-dual reformulation, an accelerated primal-dual method was developed in \cite{xu2020accelerated}. The rate of convergence is improved to $\mathcal{O}(1)\left(\frac{L}{t^2}+\frac{1}{t\sqrt{\eta} t}\right)$, where $L$ denotes the smoothness parameter of each objective function and $\eta = \lambda_2(\mathcal{L})/\lambda_m(\mathcal{L})$ is the eigengap of the graph Laplacian $\mathcal{L}$. 
 Notably, the authors established a lower bound for a class of decentralized primal-dual methods, suggesting that the developed algorithm therein is optimal in terms of gradient computations. 
 The authors in \cite{scaman2017optimal} considered the Lagrangian dual formulation of the decentralized optimization problem and developed two algorithms based on dual accelerated methods. The algorithms are proved to be linearly convergent for strongly convex and smooth problems. }
% Note that such a framework requires computing the convex conjugate of the objective at each iteration.}
% In this framework, an accelerated primal-dual method was presented in \cite{xu2019accelerated}, where the rate of convergence is improved to $\mathcal{O}(1)({1}/{t^2}+{1}/{t})$. 
%Notably, only two rounds of communication are required at each time instant for this method.

{{\bf For constrained problems}, the design and convergence analysis of decentralized optimization algorithms are more challenging \cite{latafat2016new,wai2017decentralized,li2021new}. The seminal work in \cite{nedic2010constrained} is based on} the gossip protocol and projected subgradient method, where the step size was made decaying for convergence. 
{The randomized smoothing technique and multi-round consensus scheme {are} used to design a provably optimal decentralized algorithm for non-smooth convex problems in \cite{scaman2018optimal}.}
To improve the performance using a constant step size, a variant of EXTRA (PG-EXTRA) was developed in \cite{shi2015proximal}, where the constraint is modeled as a non-smooth indicator function and handled via the proximal operator. An $\mathcal{O}({1}/{t})$ rate of convergence is proved for the squared norm of the difference of consecutive iterates. 
%{\color{red}In \cite{scaman2018optimal}, the authors considered non-smooth convex objectives. To speed up convergence, a randomized smoothing technique together with a multi-round consensus scheme is used in the design of a decentralized algorithm. This algorithm is provably optimal
%	ed to have an $\mathcal{O}(1/\sqrt{t}+1/t)$ rate of convergence which is optimal.}
Recently the authors in \cite{li2020decentralized} proposed an accelerated decentralized penalty method (APM),
%with an $\mathcal{O}(1)({1}/{t^2}+{1}/{t})$ rate of convergence, 
where the constraint can be also treated as the non-smooth part of the objective.
Notably, there are some decentralized algorithms \cite{rabbat2015multi,shahrampour2017distributed,yuan2018optimal,duchi2011dual,liu2018accelerated,liu2018distributed,liang2019dual} available in the literature where the local search {mechanism} for individual agents is inspired by dual methods \cite{nemirovsky1983problem}, e.g., mirror descent and dual averaging \cite{nesterov2009primal,xiao2010dual}.
{Particularly, dual averaging is provably more efficient in exploiting sparsity than proximal gradient methods for $l_1$-regularized problems \cite{xiao2010dual}. }
%Compared to primal methods such as gradient descent,
%% where a recursively updated dual model of the objective in conjunction with a prox-function establishes the mapping from the dual space to the primal in order to substantially 
%dual methods substantially shrink the optimization constant for problems defined over non-Euclidean geometries; see \cite{nemirovsky1983problem} for more details. 
%decentralized mirror descent \cite{rabbat2015multi,shahrampour2017distributed,yuan2018optimal} and decentralized dual averaging (DDA) \cite{duchi2011dual,liu2018accelerated,liu2018distributed,liang2019dual}. 
For example, the authors in \cite{duchi2011dual} developed a decentralized dual averaging (DDA) algorithm where a linear model of the global objective function is gradually learned by each agent via gossip. {Compared to other types of decentralized first-order methods, DDA has the advantage in simultaneously handling constraints and complex communication environments, e.g., directed networks \cite{tsianos2012push}, deterministic time-varying networks \cite{liang2019dual}, and stochastic networks \cite{duchi2011dual,colin2016gossip}.
From a technical perspective, this is because that DDA seeks consensus on linear models of the objective function rather than on the local projected iterates as in decentralized primal methods, e.g., DGD, therefore decoupling the consensus-seeking process from nonlinear projection and facilitating the rate analysis in complex communication environments. We present a more detailed comparison in Section \ref{Subsec: Algorithms}.} 

% decouples the consensus-seeking process from nonlinear projection
%
%In particular, the authors demonstrated that, instead of building consensus for local iterates directly as in decentralized primal methods, agreeing on linear models of the objective function in DDA decouples the consensus-seeking process from nonlinear projection.
%Therefore, the rate analysis can be significantly simplified, especially for problems with constraints and time-varying communication networks.

% Inspired by \cite{nesterov2015quasi}, the authors in \cite{liu2018distributed} introduced another primal averaging step to DDA to reap a non-ergodic convergence property, which makes the algorithm theoretically appealing to decentralized optimization with coupled constraints. 
 
% For problems with time-varying and unbalanced network, a combination of DDA and the push-sum consensus scheme was reported in \cite{liang2019dual}. 
% 
 
%Note that other centralized methods for constrained optimization such as the Frank-Wolfe method \cite{wai2017decentralized} and primal-dual methods have also been used to develop decentralized algorithms \cite{li2021new,latafat2016new,liu2020resource}. The rate of convergence for these methods is $\mathcal{O}({1}/{t})$ when the objective function is convex and smooth.

Although decentralized dual methods in the literature have demonstrated advantages
over their primal counterparts in terms of constraint handling and analysis complexity, existing results focused on non-smooth
problems and {can have} an $\mathcal{O}({1}/{\sqrt{t}})$ rate of convergence. \emph {Considering this, a question naturally arises: If the objective functions exhibit some desired properties, e.g., smoothness, is it possible to accelerate the convergence rate of DDA beyond $\mathcal{O}({1}/{\sqrt{t}})$?}
We provide affirmative answer to this question in this work. The main results and contributions are summarized in the following:
\begin{itemize}
	\item 
{First, we develop a new DDA algorithm, where a second-order dynamic average consensus protocol is deliberately designed to assist each agent in estimating the global dual variable.
Compared to the conventional estimation scheme \cite{duchi2011dual}, the proposed method equips each agent with provably more accurate estimates.
In particular, the estimation error accumulated over time is proved to admit an upper bound constituted by the successive difference of an auxiliary variable whose update uses the mean of local dual variables.
Then a rigorous investigation into the convergence of the auxiliary variable is carried out. 
Summarizing these two relations, we {establish} conditions for algorithm parameters such that the estimation error can be fully compensated, leading to an improved rate of convergence $\mathcal{O}(1/t)$. }

% therefore validates the use of large constant parameter in DDA.
% that assists each agent to locally track the global dual variable more accurately than that in \cite{duchi2011dual}. With the new dual estimate, the accumulation of error over time between local primal variables and their mean is proved to admit an upper bound in terms of the successive difference of mean variables. This together with a rigorous investigation of the descent property of the mean variable yields an $\mathcal{O}({1}/{t})$ convergence rate. 
%We then show that the proposed method naturally lends itself to the case with stochastic communication primarily because the synchronization is sought purely in the dual vector space and is not coupled with the projection operation, a feature that existing decentralized constrained optimization methods do not have. 

\item 
Second, we propose an accelerated DDA (ADDA) algorithm.
Different from DDA, each agent employs a first-order dynamic average consensus protocol to estimate the mean of local gradients and accumulates the estimate over time to generate a local dual variable.
By solving the convex conjugate of a $1$-strongly convex function over this local dual variable, each agent produces a primal variable and uses it to construct another two sequences of primal variables in an iterative manner based on the extrapolation technique in \cite{cohen2018acceleration} and the average consensus protocol. The rate of convergence is proved to be  $\mathcal{O}(1)\left(\frac{1}{t^2}+\frac{1}{t(1-\beta)^2}\right)$, where $\beta$ denotes the second largest singular value of the mixing matrix. Notably, the condition for the algorithmic parameter to ensure convergence does not rely on the mixing matrix. Establishing such a condition that is independent on the mixing matrix offers the appealing advantage of convenient verification in practical applications.
% therefore making itself easy to satisfy in practice.

\item 
Finally, the proposed algorithms are tested and compared with a few methods in the literature on decentralized LASSO problems characterized by synthetic and real datasets. The comparison results demonstrate the efficiency of the proposed methods.
 
%Alternatively, another 

%To further accelerate the convergence rate, we consider increasing weights for new gradients entering the linear model of the objective \cite{cohen2018acceleration}.
%For decentralized implementation, a first-order consensus protocol is used to track the global dual variable. Then two local sequences of primal variables are recursively generated using another consensus round and a prox-mapping for the tracked dual variable. 

\end{itemize}

\emph{Notation}: We use $\mathbb{R}$ and $\mathbb{R}^n$ to denote the set of reals and the $n$-dimensional Euclidean space, respectively.
%We denote a column vector of all ones by
%$\mathbf{1}$, where the dimension shall be understood from the context.
Given a real number $a$, we denote by $\lceil a \rceil$ the ceiling function that maps $a$ to the least integer greater than or equal to $a$.
Given a vector $x\in\mathbb{ R}^n$, $\lVert x  \rVert$ denotes its $2$-norm. 
Given a matrix $P \in\mathbb{R}^{n\times n}$, its spectral radius is denoted by $\rho(P)$. Its eigenvalues and singular values are denoted by $\lambda_1(P)\geq \lambda_2(P)\geq \cdots \geq \lambda_n(P)$ and $\sigma_1(P)\geq \sigma_2(P)\geq \cdots \geq \sigma_n(P)$, respectively.

% The very first letter is a 2 line initial drop letter followed
% by the rest of the first word in caps.
% 
% form to use if the first word consists of a single letter:
% \IEEEPARstart{A}{demo} file is ....
% 
% form to use if you need the single drop letter followed by
% normal text (unknown if ever used by the IEEE):
% \IEEEPARstart{A}{}demo file is ....
% 
% Some journals put the first two words in caps:
% \IEEEPARstart{T}{his demo} file is ....
% 
% Here we have the typical use of a "T" for an initial drop letter
% and "HIS" in caps to complete the first word.
%\IEEEPARstart{T}{his} demo file is intended to serve as a ``starter file''
%for IEEE journal papers produced under \LaTeX\ using
%IEEEtran.cls version 1.8b and later.
%% You must have at least 2 lines in the paragraph with the drop letter
%% (should never be an issue)
%I wish you the best of success.
%
%\hfill mds
% 
%\hfill August 26, 2015

\section{Preliminaries}

\subsection {Basic Setup}

We consider the finite-sum optimization problem \eqref{constrained_optimization}, in which $\mathcal{X}$ is a convex and compact set, and $f_i$ satisfies the following assumptions for all $i=1,\dots,n$:
%The set of solutions to \eqref{constrained_optimization} is denoted by $\mathcal{X}^*$ and assumed to be nonempty. Throughout the paper, we denote one of the minimizers and the optimal objective value by $x^*$ and $f^*$, respectively. 
\begin{assumption}\label{lipschitzassumption}
	
	\begin{itemize}
		
		\item[i)] $f_i$ is continuously differentiable on $\mathcal{X}$.
		\item[ii)] $f_i$ is convex on $\mathcal{X}$, i.e., for any $x,y\in\mathcal{X}$, 
		\begin{equation}
			\label{eq:strongly-convex}
			f_i(x) - f_i(y) - \langle\nabla f_i(y), x - y \rangle \geq 0.
		\end{equation}
		\item[iii)] $\nabla f_i$ is Lipschitz continuous on $\mathcal{X}$ with a constant $L>0$, i.e., for any $x,y\in\mathcal{X}$,
		\begin{equation}
			\label{eq:Lip-origin}
			\|\nabla f_i(x) - \nabla f_i(y)\| \leq L\|x - y\|.
		\end{equation}
	\end{itemize}
\end{assumption}
A direct consequence of Assumption \ref{lipschitzassumption}(iii) is
\begin{equation}
	\label{eq:Lip-cond}
	f_i(x) - f_i(y) - \langle\nabla f_i(y), x - y \rangle \leq \frac{L}{2}\|x - y\|^2, \forall x,y\in\mathcal{X}.
\end{equation}
The above assumptions are standard in the study of decentralized algorithms for convex optimization problems. Throughout the paper, we denote by $x^*$ an optimal solution of Problem \eqref{constrained_optimization}.

%We consider the multi-agent optimization problem given by
%\begin{equation} 
%\min_{x\in\mathcal{X}} \frac{1}{n}\sum_{i=1}^{n}f_i(x)
%\end{equation}
%where $f_i: \mathbb{R}^m\rightarrow\mathbb{R}, i\in\mathbb{N}_{[1,n]}$ 
%represents the local objective function privately known by agent $i$, $x\in\mathbb{R}^m$ stands for the common decision variable, 
%and $\mathcal{X}\subseteq\mathbb{R}^m$ denotes the constraint set
%
%
% that 
%is assumed to be convex and compact.
%Throughout this paper, 
%we denote one of the minimizers by $x^*$.
%For \eqref{constrained_optimization}, the following standard assumption is made.
%%The set of solutions to \eqref{constrained_optimization} is denoted by $\mathcal{X}^*$ and assumed to be nonempty. Throughout the paper, we denote one of the minimizers and the optimal objective value by $x^*$ and $f^*$, respectively. 
%
%\begin{assumption}\label{lipschitzassumption}
%	Each $f_i(x), i\in\mathbb{N}_{[1,n]}$ is convex and has Lipschitz continuous gradients with parameter $L$, i.e.,
%	\begin{equation*}
%\lVert \nabla f_i(x)- \nabla f_i(y)\rVert_2\leq L\lVert x-y \lVert_2, \forall x,y\in\mathcal{X},
%	\end{equation*}
%	where $\nabla f_i$ denotes the gradient of $f_i$.
%	%	 $\mathcal{X}^*\subseteq \mathcal{X}$ is nonempty.
%\end{assumption}
%A direct consequence of the above assumption is
%	\begin{equation*}
%f_i(y)\leq f_i(x)+\langle \nabla f_i(x) , y-x\rangle +\frac{L}{2}\lVert x-y \lVert_2^2, \forall x,y\in\mathcal{X}.
%\end{equation*}

\subsection {Communication Network}
We consider solving Problem \eqref{constrained_optimization} in a decentralized fashion, that is, a pair of agents can exchange information only if they are connected in the communication network.
To describe the network topology,
an undirected graph $\mathcal{G}=\{\mathcal{V},\mathcal{E}\}$ 
is used,
where $\mathcal{V}=\{1,\cdots,n\}$ denotes the set of $n$ agents and $\mathcal{E}\subseteq \mathcal{V}\times\mathcal {V}$ represents the set of bidirectional channels, i.e., $(i,j)\in \mathcal{E}$ indicates that nodes $i$ and $j$ can send information to each other. 
Agent $j$ is said to be a neighbor of $i$ if there exists a link between them, 
and the set of $i$'s neighbors is denoted by $ \mathcal{N}_i=\{j\in \mathcal{V}|(j,i)\in \mathcal{E} \}$.
For every pair $(i,j)\in\mathcal{E}$, a positive weight $p_{ij}>0$ is assigned to $i$ and $j$ to weigh the information received from each other. Otherwise $p_{ij}=0$ is considered. 
For the convergence of the algorithm, we make the following assumption for $P:=[p_{ij}]\in [0,1]^{n\times n}$.

\begin{assumption}\label{graphconnected}
	\begin{itemize}
		\item[i)] $P\mathbf{1} = \mathbf{1}$ and $\mathbf{1}^{\mathrm{T}}P = \mathbf{1}^{\mathrm{T}}$, where $\mathbf{1}$ denotes the all-one vector of dimension $n$.
		\item[ii)] $P$ has a strictly positive diagonal. i.e., $p_{ii}>0$.
	\end{itemize}
	% \begin{equation*}
	% 			\beta \Deltaq \sqrt{ \rho\big( \mathbb{E}[P_t^{\mathrm{T}}P_t] -\frac{\mathbf{1}\mathbf{1}^{\mathrm{T}}}{n} \big)}<1,
	% \end{equation*}
	% where $\rho(\cdot)$ denotes the spectral radius.
	%	, i.e.,
	%	$
	%		\beta=\max\big\{ |\lambda_2(P) |, |\lambda_n(P) |\big\} <1.
	%	$
\end{assumption}

%\begin{assumption}
%	The graph $\mathcal{G}=(\mathcal{V},\mathcal{E})$ is connected.
% $P=[p_{ij}]$ is a doubly stochastic weight matrix, i.e., $P\mathbf{1}=\mathbf{1}$ and $\mathbf{1}^{\mathrm{T}}P=\mathbf{1}^{\mathrm{T}}$, and has a strictly positive diagonal. i.e., $p_{ii}>0$.
%	%	, i.e.,
%	%	$
%	%		\beta=\max\big\{ |\lambda_2(P) |, |\lambda_n(P) |\big\} <1.
%	%	$
%\end{assumption}

Assumption \ref{graphconnected} implies that $\sigma_2(P)<1$ \cite{liu2018distributed}.
Given a connected network, the constant edge weights and the Metropolis-Hastings algorithm \cite{xiao2004fast} can be used to construct a weight matrix $P$ fulfilling Assumption \ref{graphconnected}.

%In the work, we consider a typical distributed convex optimization setting where a group of computing units connected via a communication network collaboratively minimize an aggregated objective function while respecting a common constraint. 

%For the projection operator in \eqref{cda}, a standard result in convex analysis {(Lemma 1 in \cite{nesterov2009primal})} is recalled in the following lemma.

%\begin{definition}\label{bregman_divergence}
%	For $x,y\in\mathcal{X}$, the Bregman divergence induced by a differentiable function $d(\cdot)$ is defined as
%	\begin{equation*}
%	D_{d}(x,y)=d(x)-d(y)-\langle \nabla d(y), x-y \rangle.
%	\end{equation*}
%\end{definition} 

\subsection{Centralized Dual Averaging}
%Without loss of generality, we focus on the case with $m=1$ for ease of notation in the remaining sections, i.e., $\mathbf{1}\otimes I_m=\mathbf{1}$, $P\otimes I_m=P$. 

Our algorithms are based on the dual averaging methods \cite{nesterov2009primal}. 
Before introducing them, we state the following definition.
\begin{definition}\label{prox_function}
	A differentiable function $\psi$ is strongly convex with modulus $\mu> 0$ on $\mathcal{X}$, if
	\begin{equation*}
		\psi(x)- \psi(y)-\langle\nabla \psi(y), x-y \rangle\geq\frac{\mu}{2}\lVert x-y\rVert^2, \forall x,y\in\mathcal{X}.
	\end{equation*}
\end{definition}
Let $d$ be a strongly convex  and differentiable function with modulus $1$ on $\mathcal{X}$ such that
\begin{equation}\label{eq:initial-cond}
	x^{(0)} =\argmin_{x\in\mathcal{X}} d(x) \quad  \mathrm{and} \quad d(x^{(0)})=0.
\end{equation}
{To meet the condition in \eqref{eq:initial-cond} for any $x^{(0)}\in\mathcal{X}$, one can choose
\begin{equation*}
	d(x) = \tilde{d}(x)-\tilde{d}(x^{(0)}) - \langle \nabla \tilde{d}(x^{(0)}), x-x^{(0)} \rangle,
\end{equation*} 
where $\tilde{d}$ is any strongly convex function with modulus $1$, e.g., $\tilde{d}(x)=\lVert x \rVert^2/2$.}
The convex conjugate of $d$ is defined as
\begin{equation*}
	\nabla d^*(\cdot) =\argmax_{x\in\mathcal{X}} \left\{ \left\langle \cdot, x\right\rangle -d(x) \right\}.
\end{equation*}
As a corollary of Danskin's Theorem, we have the following result \cite{cohen2018acceleration}.
\begin{lemma} \label{gamma_continuity}
	For all $x,y\in\mathbb{R}^m$, we have
	\begin{equation}\label{projection_lipschitz}
		\begin{split}
			&\left\lVert \nabla d^*(x) -\nabla d^*(y) \right\rVert  \leq  \lVert x-y\rVert.
		\end{split}
	\end{equation}
\end{lemma}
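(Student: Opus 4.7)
The plan is to derive the non-expansiveness of $\nabla d^*$ from the first-order optimality condition for the constrained maximization defining $\nabla d^*$, combined with the strong convexity of $d$ with modulus $1$.

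First, I would set $u = \nabla d^*(x)$ and $v = \nabla d^*(y)$, both of which lie in $\mathcal{X}$ by definition. Since $u$ maximizes $z\mapsto \langle x,z\rangle - d(z)$ over $\mathcal{X}$, the standard variational inequality for constrained optimization yields $\langle x - \nabla d(u),\, z - u\rangle \leq 0$ for all $z\in\mathcal{X}$. Analogously, $\langle y - \nabla d(v),\, z - v\rangle \leq 0$ for all $z\in\mathcal{X}$.

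Next, I would specialize these inequalities by substituting $z = v$ in the first and $z = u$ in the second, and add them. This eliminates the inner products with the minimizers' values and leaves
\begin{equation*}
\langle x - y,\, v - u\rangle \;\leq\; \langle \nabla d(u) - \nabla d(v),\, v - u\rangle.
\end{equation*}
Invoking the $1$-strong convexity of $d$ (Definition \ref{prox_function} with $\mu=1$), one has $\langle \nabla d(v) - \nabla d(u),\, v - u\rangle \geq \|v-u\|^2$, so the right-hand side above is bounded by $-\|v-u\|^2$. Rearranging gives $\langle y - x,\, v - u\rangle \geq \|v-u\|^2$, and Cauchy–Schwarz then delivers $\|v-u\| \leq \|y-x\|$, which is exactly \eqref{projection_lipschitz}.

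I do not anticipate a real obstacle here, as the argument is classical; the only point that requires a bit of care is the justification of the variational inequality on the constrained domain and the fact that $\nabla d$ is well-defined at $u,v$, both of which follow since $\mathcal{X}$ is convex and compact and $d$ is differentiable on $\mathcal{X}$ by assumption. An alternative route would be via Danskin's theorem (as hinted in the text), deducing that $\nabla d^*$ equals the argmax and then bounding its variation by duality; however, the optimality-condition argument above is shorter and self-contained.
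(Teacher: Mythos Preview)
Your argument is correct and complete. The paper does not actually supply a proof of this lemma: it simply states the result as a corollary of Danskin's Theorem and cites \cite{cohen2018acceleration}. Your route---writing out the variational inequalities for the two constrained maximizers, adding them, and invoking the $1$-strong convexity of $d$ to extract $\|v-u\|^2 \leq \langle y-x, v-u\rangle$---is the standard direct proof and is fully self-contained, whereas the paper's ``proof'' is a deferral to the literature. The Danskin viewpoint would first identify $\nabla d^*$ as the unique maximizer (which you already use as the definition) and then appeal to the smoothness of $d^*$ dual to the strong convexity of $d$; your optimality-condition argument bypasses that duality machinery and is arguably cleaner for this purpose.
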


{\bf Dual averaging.} 
The dual averaging method can be applied to solving Problem \eqref{constrained_optimization} in a centralized manner. Starting with $x^{(0)}$, it generates a sequence of variables $\{x^{(t)}\}_{t\geq 0}$ iteratively according to
%\begin{equation}\label{cda}
%{x}_{t+1}= \arg \min_{x\in\mathcal{X}}\big\{a_t\langle \sum_{k=0}^{t}\nabla f(x_{k}),x \rangle+d(x)\big\}
%%	\\
%%	%	x_{t+1}&=\frac{A_{t}}{A_{t+1}}x_{t}+\frac{a_{t+1}}{A_{t+1}}\hat{x}_{t+1} \\
%%	z_{t+1}& = z_{t}+a_{t+1}\nabla f(x_{t+1})
%\end{equation}

%\begin{equation}\label{cda}
%	{x}^{(t)}= \nabla d^*\left(-a_t\sum_{\tau=0}^{t-1}\nabla f(x^{(\tau)})\right)
%	%	\\
%	%	%	x_{t+1}&=\frac{A_{t}}{A_{t+1}}x_{t}+\frac{a_{t+1}}{A_{t+1}}\hat{x}_{t+1} \\
%	%	z_{t+1}& = z_{t}+a_{t+1}\nabla f(x_{t+1})
%\end{equation}
\begin{equation}\label{cda}
	{x}^{(t)}= \nabla d^*\left(  - a_tz^{(t)}  \right) 
	%	\\
	%	%	x_{t+1}&=\frac{A_{t}}{A_{t+1}}x_{t}+\frac{a_{t+1}}{A_{t+1}}\hat{x}_{t+1} \\
	%	z_{t+1}& = z_{t}+a_{t+1}\nabla f(x_{t+1})
\end{equation}
where 
\begin{equation}\label{c_dual}
	z^{(t)} = \sum_{\tau = 0}^{t-1}\nabla f(x^{(\tau)})
\end{equation}
%and
%\begin{equation*}
%	\nabla d^*(-z)=\argmin_{x\in\mathcal{X}}\left\{d(x)+\langle z,x \rangle\right\},
%\end{equation*}
and $\{a_t\}_{t\geq 0}$ is a sequence of positive 
parameters that determines the rate of convergence.
Let $\tilde{x}^{(t)} = t^{-1}\sum_{\tau=0}^{t-1}{x}_i^{(\tau)}$.
It is proved in \cite{nesterov2009primal} 
that $f(\tilde{x}^{(t)})-f(x^*) \leq \mathcal{O}({1}/{\sqrt{t}})$ 
%an  $\mathcal{O}({1}/{\sqrt{t}})$ rate of convergence is ensured for dual averaging 
when $a_t=\Theta(1/\sqrt{t})$, that is, with order exactly $1/\sqrt{t}$. When the objective function is convex and  smooth, a
constant $a_t=a$ can be used to achieve an ergodic
$\mathcal{O}({1}/{t})$ rate of convergence in terms of objective error \cite{lu2018relatively}.

{{\bf Accelerated dual averaging.} To speed up the rate of convergence, an accelerated dual averaging algorithm is developed in \cite{cohen2018acceleration}. In particular, the variables are updated according to
\begin{subequations}\label{ADA_iteration}
	\begin{align}
		u^{(t)}&=\frac{A_{t-1}}{A_{t}}v^{(t-1)}+\frac{a_{t}}{A_{t}}{w}^{(t-1)} \\
		v^{(t)}&=\frac{A_{t-1}}{A_{t}}v^{(t-1)}+\frac{a_{t}}{A_{t}}	{w}^{(t)},
	\end{align}
\end{subequations}
where {$a_t:=a(t+1)$} for some $a>0$, $A_t=\sum_{\tau=1}^{t}a_\tau$ and
\begin{equation} \label{projection_ADA}
	{w}^{(t)}=\nabla d^*\left(  - \sum_{\tau=1}^{t}a_\tau \nabla f(u^{(\tau)})  \right).
\end{equation}}
Note that $t\geq 2$ is considered for the above iteration, and the variables are initialized with $u^{(1)}=w^{(0)} = x^{(0)}$, $v^{(1)}=w^{(1)} $.
For convex and smooth objective functions, it is proved that $f(v^{(t)})-f(x^*)\leq \mathcal{O}(1/t^2)$ \cite{cohen2018acceleration}.

\section{Algorithms and Convergence Results}
In this section, we develop two new DDA algorithms that are provably more efficient than existing DDA-type algorithms.

\subsection {Decentralized Dual Averaging}\label{Subsec: Algorithms}

To solve Problem \eqref{constrained_optimization} in a decentralized manner, we propose a novel DDA algorithm.
In particular, we employ the following dynamic average consensus protocol to estimate $z^{(t)}$ in \eqref{c_dual}: 
\begin{subequations}\label{2nd_order_consensus}
	\begin{align}
	%	{x}_{i,t+1}&= \sum_{j=1}^{n}p_{ij}\hat{x}_{j,t}\\
	%	x_{i,t+1}&=\frac{A_{t}}{A_{t+1}}\sum_{j=1}^{n}p_{ij}x_{j,t}+\frac{a_{t+1}}{A_{t+1}}\hat{x}_{i,t+1} \\
	%	h_{i,t+1}&=\sum_{j\in\mathcal{N}_i\cup \{i\}}p_{ij}h_{j,t}+ s_{i,t+1}-s_{i,t}\\
			z_{i}^{(t)}&=\sum_{j=1}^np_{ij}\left( z_{j}^{(t-1)}+s_{j}^{(t-1)}  \right), \label{def:z}\\
	s_{i}^{(t)}&=\sum_{j=1}^np_{ij} s_{j}^{(t-1)}+\nabla f_i(x^{(t)}_i)-\nabla f_i(x^{(t-1)}_i)  , \label{def:s}
	%		\\
	%	q_{i,t+1}&=\sum_{k=0}^{t+1}h_{i,k} 
	%		 \\
	%	r_{i,t+1}&=r_{i,t}+s_{i,t+1}-\sum_{j\in\mathcal{N}_i\cup \{i\}}p_{ij}s_{j,t+1}.
	%	\\
	%	h_{i,t+1}&=h_{i,t}+ q_{i,t+1}- \sum_{j\in\mathcal{N}_i\cup \{i\}}p_{ij} q_{j,t+1}.
	%	\\
	%%	{s}^2_{i,t+1}&=\sum_{j=1}^{n}p_{ij}s^2_{j,t}+s^1_{i,t+1}-s^1_{i,t}  \\
	%	z_{i,t+1}& = 
	%	\sum_{j=1}^{n}p_{ij}z_{i,t}+a_{t+1}{s}_{i,t+1}
	\end{align}
\end{subequations}
where $z_{i}^{(t)}$ is a local estimate of $z^{(t)}$ generated by agent $i$, $s_i^{(t)}$ is a proxy of $\frac{1}{n}\sum_{i=1}^{n}\nabla f_i(x_i^{(t)})$ which aims to reduce the consensus error among variables $\{z_i^{(t)}:i=1,\cdots, n\}_{t\geq 0}$.
%We can see from an example in Fig. \ref{fig:DualError} that for a constant `$a$', the second-order scheme in \eqref{2nd_order_consensus} gives a significantly smaller tracking error than \eqref{DA}. Note that we make the change of the reference signal in Fig. \ref{fig:DualError} square summable over time, which behaves similarly with the gradient in smooth optimization. 
Using it, each agent $i$ performs a local computation step to update its estimate of $x^{(t)}$:
\begin{equation}\label{primal_smooth}
%{x}_{i,t+1}= \arg \min_{x\in\mathcal{X}}\Big\{ \sum_{k=0}^{t}a\Big\langle h_{i,k},x \Big\rangle+d(x)\Big\}.
%	\\
%%	{s}^2_{i,t+1}&=\sum_{j=1}^{n}p_{ij}s^2_{j,t}+s^1_{i,t+1}-s^1_{i,t}  \\
%	z_{i,t+1}& = 
%	\sum_{j=1}^{n}p_{ij}z_{i,t}+a_{t+1}{s}_{i,t+1}
{x}_{i}^{(t)}= \nabla d^*\left(  -az_i^{(t)}  \right).
\end{equation}
The overall algorithm is summarized in Algorithm \ref{FDDA}.

\begin{algorithm}[tb]
	\caption{Decentralized Dual Averaging}
		\label{FDDA}
	\begin{algorithmic}
		\STATE {\bfseries Input:} $a>0$, $x^{(0)}\in\ \mathcal{X}$ and a strongly convex function $d$ with modulus $1$ such that \eqref{eq:initial-cond} holds
		% 		\begin{equation}
		% 		    \label{eq:initial-cond}
		% 		    x^{(0)} = \argmin_{x\in\mathbb{R}^m} d(x) \in \mbox{dom}(h) \ \ \mbox{and} \ \ d(x^{(0)}) = 0
		% 		\end{equation}
		\STATE {\bfseries Initialize:} $x_i^{(0)} = x^{(0)}$, $z_i^{(0)} = 0$, and $s_i^{(0)} = \nabla f_i(x^{(0)})$ for all $i = 1, \dots, n$ 
		\FOR{$t=1,2,\cdots$}
		%		\FORALLP{$i = 1,\dots,n$} 
		\STATE \emph{In parallel (task for agent $i$, $i = 1,\dots,n$)}
		\STATE collect $z_{j}^{(t-1)}$ and $s_{j}^{(t-1)}$ from all agents $j\in\mathcal{N}_{i}$
		\STATE update $z_{i}^{(t)}$ and $s_{i}^{(t)}$ by \eqref{2nd_order_consensus}
		\STATE compute $x_i^{(t)}$ by \eqref{primal_smooth}
		\STATE broadcast $z_{i}^{(t)}$ and $s_{i}^{(t)}$ to all agents $j\in\mathcal{N}_{i}$
		%		\ENDFAP
		\ENDFOR
		%	\UNTIL{$noChange$ is $true$}
	\end{algorithmic}
\end{algorithm}

Before proceeding, we make the following remarks on Algorithm \ref{FDDA}.

% \textbf{i) Intuition}: 
%The scheme in \eqref{2nd_order_consensus} is inspired by the high-order consensus scheme in \cite{zhu2010discrete}. By letting 
% \textbf{i) Condition in \eqref{eq:initial-cond}}:  For any $x^{(0)}\in\mathcal{X}$, one can choose
%	\begin{equation*}
%		d(x) = \tilde{d}(x)-\tilde{d}(x^{(0)}) - \langle \nabla \tilde{d}(x^{(0)}), x-x^{(0)} \rangle 
%	\end{equation*}
%to meet the condition in \eqref{eq:initial-cond},
%where $\tilde{d}$ is any strongly convex function with modulus $1$, e.g., $\tilde{d}(x)=\lVert x \rVert^2/2$.

{\textbf{i) Subproblem solvability.}  Similar to centralized dual averaging methods, we assume the subproblem in \eqref{primal_smooth} can be solved easily. For general problems, we can choose $d(x)=\lVert x-x^{(0)} \rVert^2/2$ such that the subproblem \eqref{primal_smooth} reduces to computing the projection of variables onto $\mathcal{X}$. If $\mathcal{X}$ is simple enough, e.g., the simplex or $l_1$-norm ball, a closed-form solution exists.

\textbf{ii) Comparison with existing DDA algorithms.}
In existing DDA algorithms \cite{duchi2011dual,liang2019dual,liu2018distributed},  each agent estimates $z^{(t)}$ in the following way
	\begin{equation}\label{d_dual}
		z_{i}^{(t)}=\sum_{j=1}^np_{ij}z_{j}^{(t-1)}+\nabla f_i(x_{i}^{(t)}).
	\end{equation}
For this scheme, it is proved that the consensus error among variables $\{z_i^{(t)}:i=1,\cdots, n\}_{t\geq 0}$ admits a constant upper bound \cite{duchi2011dual}, which necessitates the use of a monotonically decreasing sequence $\{a_t\}_{t\geq 0}$ for convergence. However, decreasing $a_t$ slows down the convergence significantly; the rate of convergence in \cite{duchi2011dual,liang2019dual} is reported to be $\mathcal{O}(1/\sqrt{t})$. To speed up the convergence, we develop the consensus protocol in \eqref{2nd_order_consensus}, 
which is inspired by the high-order consensus scheme in \cite{zhu2010discrete}.
Thanks to it, we are able to prove that the deviation among variables $\{z_i^{(t)}:i=1,\cdots, n\}_{t\geq 0}$ asymptotically vanishes as time evolves. 
Therefore, the parameter in \eqref{primal_smooth} can be set constant, i.e, $a_t=a>0$, which is key to obtaining the improved rates.}

%a more accurate estimate of $z^{(t)}$ can be locally obtained by each agent. This novel scheme, in company with \eqref{primal_smooth}, guarantees an $\mathcal{O}(1/t)$ rate of convergence for smooth problems, as proved in Theorem \ref{inexact_thm}.

%	where $p_{ij}^{(t)}$ denotes the $(i,j)$-th element in the mixing matrix $P$, $z_{i}^{(t)}$ is a local estimate of $z^{(t)}$. Using it, each agent $i$ performs a local computation step to update its estimate of $x^{(t)}$:
%	\begin{equation}\label{c_dda}
%		{x}_i^{(t)}= \nabla d^*\left(  - a_tz_i^{(t)}  \right).
%	\end{equation}
	
\textbf{iii) Comparison with DGD algorithms.}
In existing DGD, a so-called gradient-tracking process similar to \eqref{2nd_order_consensus} is usually observed:
\begin{equation}\label{gradient-tracking}
	\begin{split}
		%	{x}_{i,t+1}&= \sum_{j=1}^{n}p_{ij}\hat{x}_{j,t}\\
		%	x_{i,t+1}&=\frac{A_{t}}{A_{t+1}}\sum_{j=1}^{n}p_{ij}x_{j,t}+\frac{a_{t+1}}{A_{t+1}}\hat{x}_{i,t+1} \\
		%	h_{i,t+1}&=\sum_{j\in\mathcal{N}_i\cup \{i\}}p_{ij}h_{j,t}+ s_{i,t+1}-s_{i,t}\\
		x_{i}^{(t)}&=\sum_{j=1}^np_{ij}\left( x_{j}^{(t-1)}+as_{j}^{(t-1)}  \right),\\
		s_{i}^{(t)}&=\sum_{j=1}^np_{ij} s_{j}^{(t-1)}+\nabla f_i(x^{(t)}_i)-\nabla f_i(x^{(t-1)}_i)  ,
		%		\\
		%	q_{i,t+1}&=\sum_{k=0}^{t+1}h_{i,k} 
		%		 \\
		%	r_{i,t+1}&=r_{i,t}+s_{i,t+1}-\sum_{j\in\mathcal{N}_i\cup \{i\}}p_{ij}s_{j,t+1}.
		%	\\
		%	h_{i,t+1}&=h_{i,t}+ q_{i,t+1}- \sum_{j\in\mathcal{N}_i\cup \{i\}}p_{ij} q_{j,t+1}.
		%	\\
		%%	{s}^2_{i,t+1}&=\sum_{j=1}^{n}p_{ij}s^2_{j,t}+s^1_{i,t+1}-s^1_{i,t}  \\
		%	z_{i,t+1}& = 
		%	\sum_{j=1}^{n}p_{ij}z_{i,t}+a_{t+1}{s}_{i,t+1}
	\end{split}
\end{equation}
	where $a$ represents the step size. 
The proposed scheme \eqref{2nd_order_consensus} differs from \eqref{gradient-tracking} in step \eqref{def:z}. With such a deliberate design and another local dual averaging step in \eqref{primal_smooth}, Algorithm \ref{FDDA} solves \emph{constrained} problems with convergence rate guarantee.
	To compare DDA with existing algorithms applicable to solving Problem \eqref{constrained_optimization}, we recall the PG-EXTRA algorithm \cite{shi2015proximal}:
	\begin{equation}\label{pg-extra}
		\begin{split}
			\hat{x}_{i}^{(t+1)}=&\sum_{j=1}^{n}p_{ij}{x}_{j}^{(t)}+\hat{x}_{i}^{(t)}-\sum_{j=1}^{n}\tilde{p}_{ij}{x}_{j}^{(t-1)}\\
			&-a\left( \nabla f_i(x_{i}^{(t)})-\nabla f_i(x_{i}^{(t-1)})\right)\\
			{x}_{i}^{(t+1)} =& \argmin_{x\in\mathcal{X}} \left\lVert x-\hat{x}_{i}^{(t+1)}\right\rVert^2
		\end{split}
	\end{equation}
	where $a$ represents the step size and 
	$\tilde{p}_{ij}^{(t)}$ denotes the $(i,j)$-th entry of $\tilde{ P}={(P+I)}/{2}$. {Notably, PG-EXTRA seeks consensus among variables $\{{  x}_{i}^{(t)}:i=1,\cdots,n\}$ at time $t+1$ that are obtained via a projection operator at time $t$.
	In contrast, DDA manages to agree on $\{{  z}_{i}^{(t)}:i=1,\cdots,n\}$, which essentially decouples the consensus-seeking procedure from projection. 
	After using the smoothness assumption in \eqref{eq:Lip-origin} to bound $\lVert \nabla f_i(x^{(t)}_i)-\nabla f_i(x^{(t-1)}_i) \rVert$ in \eqref{def:s}, the iteration in \eqref{2nd_order_consensus} can be kept almost linear, which greatly facilitates the rate analysis; see the proof of Lemma 
	\ref{consensus_error_lemma} for more details.
	
%	From a technical perspective, this helps keep the iteration in \eqref{2nd_order_consensus} almost linear under the smoothness assumption in \eqref{eq:Lip-origin}, which greatly facilitates the rate analysis.
%	As a consequence, the rate analysis of DDA can be made rather concise.
}

%	It is worth mentioning that this work is built on dual averaging method and is different from PG-EXTRA. In particular, 
%	
%	
%	 mixes vectors from different vector spaces, considering that $\Big(\{\hat{  x}_{i}^{(t)}\}_{t\geq 0}, \{{  x}_{i}^{(t)}\}_{t\geq 0}\Big)$ and $\{\nabla f_i(  x_{i}^{(t)})\}_{t\geq 0}$ are belong to the primal space and the dual space, respectively. This is in sharp contrast with DDA in \eqref{2nd_order_consensus} and \eqref{primal_smooth} where two types of vectors are separately treated. 

%For the iteration rule in Algorithm \ref{FDDA}, it may be difficult to directly analyze the convergence of $x_{i,t}$. 

Next, we present the convergence result of Algorithm \ref{FDDA}.
Inspired by \cite{duchi2011dual}, we first 
establish the convergence property of an auxiliary sequence $\{{y}^{(t)}\}_{t\geq 0}$, which is instrumental in proving the convergence of $\{x_i^{(t)}: i =1 ,\cdots,n\}_{t\geq 0}$. In particular, the update of ${y}^{(t)}$ obeys
%\begin{subequations}
%	\begin{align}
%	x^{[1]}_{i,t+1}&=\frac{A_{t}}{A_{t+1}}x^{[2]}_{i,t}+\frac{a_{t+1}}{A_{t+1}}\hat{x}_{i,t} \\
%	%		q_{i,t+1}&=\sum_{k=0}^{t}h_{i,k} \\
%	{y}_{i,t+1}&= \arg \min_{x\in\mathcal{X}}\Big\{ \sum_{k=0}^{t+1}a_{k}\Big\langle g_{k},x \Big\rangle+d(x)\Big\}\\
%	x^{[2]}_{i,t+1}&=\frac{A_{t}}{A_{t+1}}x^{[2]}_{i,t}+\frac{a_{t+1}}{A_{t+1}}	\hat{x}_{i,t+1} \\
%	\end{align}
%\end{subequations}
%\begin{equation}\label{auxiliary_sequence}
%%	\begin{align}
%	{y}_{i,t+1}= \arg \min_{x\in\mathcal{X}}\Big\{ \sum_{k=0}^{t+1}a_{k}\Big\langle g_{k},x \Big\rangle+d(x)\Big\}
%%	\\
%%	y_{t+1}&=\frac{A_{t}}{A_{t+1}}y_{t}+\frac{a_{t+1}}{A_{t+1}}y_{t+1} 
%%	\\
%%%	\overline{s}_{t+1}&=g_{t+1}\\
%%	%	{s}^2_{i,t+1}&=\sum_{j=1}^{n}p_{ij}s^2_{j,t}+s^1_{i,t+1}-s^1_{i,t}  \\
%%	w_{t+1}& = 
%%	w_{t}+a_{t+1}{g}_{t+1}
%%	\end{align}
%\end{equation}
\begin{equation}\label{auxiliary_sequence}
	%	\begin{align}
	%	y^{[1]}_{t+1}&=\frac{A_{t}}{A_{t+1}}y^{[2]}_{t}+\frac{a_{t+1}}{A_{t+1}}{y}_{t} \\
	%		q_{i,t+1}&=\sum_{k=0}^{t}h_{i,k} \\
	{y}^{(t)}= \nabla d^*\left(  - a\overline{z}^{(t)}  \right),
	%	y^{[2]}_{t+1}&=\frac{A_{t}}{A_{t+1}}y^{[2]}_{t}+\frac{a_{t+1}}{A_{t+1}}	{y}_{t+1} \\
	%	\end{align}
\end{equation}
where the initial vector ${y}^{(0)}=x^{(0)}$ and $\overline{z}^{(t)}=\frac{1}{n}\sum_{i=1}^{n}z_i^{(t)}$.
To proceed, we introduce the following $2\times 2$ matrix:
%\begin{equation*}
%	{\bf M}=\begin{bmatrix}
%	\beta & a \\
%	L(\beta+1) & \beta+La
%\end{bmatrix},
%\end{equation*}
	\begin{equation}\label{def:M}
	{\bf M}=\begin{bmatrix}
		\beta & \beta \\
		aL(\beta+1) & \beta(aL+1)
	\end{bmatrix}
\end{equation}
where $\beta =\sigma_2(P)$,
and let $\rho({\bf M})$ be the spectral radius of ${\bf M}$.

\begin{theorem}\label{inexact_thm}
	Suppose that Assumptions \ref{lipschitzassumption} and \ref{graphconnected} are satisfied. If 
the constant $a$ in Algorithm \ref{FDDA} satisfies 
		\begin{equation}\label{1st_condition}
%		{a}<\frac{(1-\beta)^2}{2\beta L}, 
\frac{1}{a}>2L\cdot\max \left\{\frac{\beta}{(1-\beta)^2}, 1 +\frac{8 }{9\left(1-(\rho({\bf M}))^2\right)}\right\},
	\end{equation}	
%	  and 
%	\begin{equation}\label{def:gamma}
%			\gamma := \frac{1}{2} -aL-\frac{8aL}{9\left(1-(\rho({\bf M}))^2\right)}\geq 0,
%	\end{equation}
then, for all $t\geq 1$, it holds that
	\begin{equation}\label{FDDA_rate}
		\begin{split}
			f(\tilde{y}^{(t)})  -f(x^*)\leq \frac{C}{at},
			%\\
			%&+n  \sum_{k=0}^{t-1}\sum_{k=0}^{t-1}\langle  ag_{k}, {y}_{k+1}-x\rangle
			%\\
			%\leq& \sum_{k=0}^{t-1}\sum_{j=1}^{n}a\Big( f_j(x_{i,k+1}) -f_j(y_{k+1})+f_j(y_{k+1})- f_j(x) \Big)
		\end{split}
	\end{equation}
	where $\tilde{y}^{(t)}=t^{-1}\sum_{\tau=1}^{t}{y}^{(\tau)}$ with ${y}^{(\tau)}$ defined in \eqref{auxiliary_sequence}, 
	\begin{equation*}
		C:=d(x^*) +\frac{8a\pi^2 }{9nL\left(1-(\rho({\bf M}))^2\right)},
	\end{equation*}
and 
\begin{equation}\label{gradient_variance_t0}
	\pi^2 =  {\sum_{i=1}^n\left\|\nabla f_i(x^{(0)}) - \frac{1}{n}\sum_{j=1}^{n}\nabla f_j (x^{(0)})\right\|^2}.
\end{equation}
In addition, for all $t\geq 1$ and $i=1,\cdots, n$, we have
	\begin{equation}	    \label{eq:x-y-dist}
		\lVert \tilde{x}_i^{(t)} - \tilde{y}_i^{(t)}  \rVert^2  \leq \frac{D}{t}
	\end{equation}
where $\tilde{x}_i^{(t)}=t^{-1}\sum_{\tau=1}^{t}{x}_i^{(\tau)}$,
\begin{equation*}
	D:=\frac{8nC}{9\gamma\left(1-\rho({\bf M})\right)^2} + \frac{8\pi^2 }{9L^2\left(1-(\rho({\bf M}))^2\right)} ,
\end{equation*}
and 	\begin{equation}\label{def:gamma}
	\gamma := \frac{1}{2} -aL-\frac{8aL}{9\left(1-(\rho({\bf M}))^2\right)}.
\end{equation}
\end{theorem}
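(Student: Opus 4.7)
The plan is to decompose the analysis into two coupled tracks and close a feedback loop between them: (a) run a classical centralized dual averaging analysis on the virtual iterate $\{y^{(t)}\}$, and (b) bound the dual consensus errors $\{z_i^{(t)}-\bar z^{(t)}\}$ produced by \eqref{2nd_order_consensus}. As a preliminary, I would exploit Assumption \ref{graphconnected} and take the network average of \eqref{def:z} and \eqref{def:s}. Since $\mathbf{1}^{\mathrm T}P=\mathbf{1}^{\mathrm T}$, this gives $\bar z^{(t)}=\bar z^{(t-1)}+\bar s^{(t-1)}$ and $\bar s^{(t)}=\bar s^{(t-1)}+\tfrac{1}{n}\sum_i[\nabla f_i(x_i^{(t)})-\nabla f_i(x_i^{(t-1)})]$. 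Telescoping the second identity with the initialization $s_i^{(0)}=\nabla f_i(x^{(0)})$ yields $\bar s^{(t)}=\tfrac{1}{n}\sum_i\nabla f_i(x_i^{(t)})$ and hence $\bar z^{(t)}=\sum_{\tau=0}^{t-1}\bar s^{(\tau)}$. Consequently $y^{(t)}$ from \eqref{auxiliary_sequence} is exactly a centralized dual averaging iterate driven by the inexact gradient $\bar s^{(\tau)}$, the inexactness being the mismatch $\bar s^{(\tau)}-\nabla f(y^{(\tau)})$.

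With $\{y^{(t)}\}$ viewed as inexact centralized dual averaging, I would run the standard descent estimate. Combining convexity of $f$, the smoothness inequality \eqref{eq:Lip-cond}, strong convexity of $d$, and the envelope representation of $\nabla d^*$ produces a summed inequality of the form $a\sum_{\tau=1}^{t}[f(y^{(\tau)})-f(x^*)] \leq d(x^*) + R_t - \tfrac{1}{2}\sum_{\tau=1}^{t}\|y^{(\tau)}-y^{(\tau-1)}\|^2$, where the residual $R_t$ bundles cross-terms caused by the gradient mismatch $\bar s^{(\tau)}-\nabla f(y^{(\tau)})$. Applying $L$-smoothness collapses $R_t$ into a weighted sum of $\|x_i^{(\tau)}-y^{(\tau)}\|^2$, and Lemma \ref{gamma_continuity} applied to \eqref{primal_smooth} and \eqref{auxiliary_sequence} converts that further into a weighted sum of the dual consensus errors $\|z_i^{(\tau)}-\bar z^{(\tau)}\|^2$.

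The technical heart is the consensus-error bound itself. Stacking $(\|z_i^{(\tau)}-\bar z^{(\tau)}\|,\|s_i^{(\tau)}-\bar s^{(\tau)}\|)$ into a two-component vector and substituting \eqref{def:z}--\eqref{def:s}, the mixing contraction $\sigma_2(P)=\beta$ combined with $\|\nabla f_i(x_i^{(\tau)})-\nabla f_i(x_i^{(\tau-1)})\|\leq L\|x_i^{(\tau)}-x_i^{(\tau-1)}\|$ produces an almost-linear recursion whose iteration matrix is precisely $\mathbf M$; the entries $\beta$, $aL(\beta+1)$ and $\beta(aL+1)$ then emerge naturally from the cross between smoothness and mixing. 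Using Lemma \ref{gamma_continuity} once more to control $\|x_i^{(\tau)}-x_i^{(\tau-1)}\|$ by $a\|z_i^{(\tau)}-z_i^{(\tau-1)}\|$, and then by $\|y^{(\tau)}-y^{(\tau-1)}\|$ plus a consensus remainder, iterating this recursion and squaring should give a bound of the form $\sum_{\tau=1}^t\|z_i^{(\tau)}-\bar z^{(\tau)}\|^2 \lesssim \pi^2/(1-\rho(\mathbf M)^2) + (1-\rho(\mathbf M)^2)^{-1}\sum_{\tau=1}^t\|y^{(\tau)}-y^{(\tau-1)}\|^2$, with explicit constants matching those appearing in $C$ and $D$. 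This is presumably the content of the consensus-error lemma referenced in the surrounding discussion.

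To close the loop, I would substitute the consensus bound into $R_t$, which yields a positive coefficient on $\sum_{\tau=1}^t\|y^{(\tau)}-y^{(\tau-1)}\|^2$. The parameter condition \eqref{1st_condition} is engineered so that this coefficient is strictly smaller than $1/2$, leaving the strictly positive residual $\gamma$ defined in \eqref{def:gamma}. Rearranging yields $a\sum_{\tau=1}^{t}[f(y^{(\tau)})-f(x^*)] + \gamma\sum_{\tau=1}^{t}\|y^{(\tau)}-y^{(\tau-1)}\|^2 \leq d(x^*) + \tfrac{8a\pi^2}{9nL(1-\rho(\mathbf M)^2)}$, and Jensen's inequality applied to $\tilde y^{(t)}$ delivers \eqref{FDDA_rate}. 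For \eqref{eq:x-y-dist}, I would use convexity of $\|\cdot\|^2$ to bound $\|\tilde x_i^{(t)}-\tilde y^{(t)}\|^2 \leq t^{-1}\sum_{\tau=1}^t\|x_i^{(\tau)}-y^{(\tau)}\|^2$, apply Lemma \ref{gamma_continuity} to pass to $a^2\sum_\tau\|z_i^{(\tau)}-\bar z^{(\tau)}\|^2$, reinsert the consensus estimate, and use the newly established summability of $\sum_\tau\|y^{(\tau)}-y^{(\tau-1)}\|^2$ (finite because $\gamma>0$) to arrive at the stated constant $D$. The main obstacle is keeping the coupled consensus recursion genuinely linear so that $\rho(\mathbf M)<1$ delivers contraction, and then tuning $a$ so that the feedback from consensus into the primal descent inequality does not exhaust the $-\tfrac{1}{2}\sum\|y^{(\tau)}-y^{(\tau-1)}\|^2$ reserve furnished by the dual-averaging estimate; everything else reduces to careful bookkeeping of constants.
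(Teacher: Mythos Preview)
Your proposal is correct and follows essentially the same route as the paper: first establish the conservation identity $\bar s^{(t)}=\bar g^{(t)}$ and $\bar z^{(t)}=\sum_{\tau<t}\bar g^{(\tau)}$ (the paper's Lemma~\ref{conservation_property}), then derive the coupled $2\times2$ recursion with matrix $\mathbf M$ for the stacked consensus errors to obtain the bound on $\sum_\tau\|\mathbf x^{(\tau)}-\mathbf y^{(\tau)}\|^2$ (Lemma~\ref{consensus_error_lemma}), combine it with the standard dual-averaging inequality (Lemma~\ref{inexact_dual_averaging_inter}) applied to $y^{(t)}$, and use \eqref{1st_condition} to make the net coefficient $\gamma$ of $\sum_\tau\|y^{(\tau)}-y^{(\tau-1)}\|^2$ positive. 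The only cosmetic difference is that the paper linearizes each $f_i$ at $x_i^{(\tau-1)}$ rather than $y^{(\tau)}$, so the residual appears as $\frac{L}{2}\|y^{(\tau)}-x_i^{(\tau-1)}\|^2$ (split via $y^{(\tau-1)}$) instead of an explicit gradient-mismatch term, and the consensus recursion is written for the aggregated norms $\|\tilde{\mathbf z}^{(\tau)}\|,\|\tilde{\mathbf s}^{(\tau)}\|$ rather than per-agent; neither changes the argument.
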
  
\begin{proof}
	The proof is postponed to Appendix A.
\end{proof}
{
To obtain a more explicit version of \eqref{1st_condition}, we identify the eigenvalues of ${\bf M}$ as $\lambda_1 = (\xi_1+\xi_2)/2$ and $\lambda_2 = (\xi_1-\xi_2)/2$, where
\begin{equation}\label{eig}
	\xi_1 = \beta(2+aL)>0, \quad \xi_2= \sqrt{a^2\beta^2L^2+4aL\beta(\beta+1)}>0.
\end{equation}
Thus, we have $\lvert \lambda_1 \rvert>\lvert \lambda_2 \rvert$ and $\rho({\bf M}) =\lambda_1>0$. By routine calculation, we can verify that $\rho({\bf M})$ and $\nu(a):=\frac{8}{9(1-(\rho({\bf M}))^2)}$ monotonically increase with $a$. Due to 
\begin{equation*}
	\nu(\frac{1}{2L}) < \frac{1}{\left(1-\left(\frac{2.5\beta +\sqrt{2.25\beta^2+2\beta}}{2}\right)^2\right)},
\end{equation*}
we have that as long as $a$ satisfies
\begin{small}
\begin{equation}\label{explicit_parameter}
	%		{a}<\frac{(1-\beta)^2}{2\beta L}, 
	\frac{1}{a}>2L\cdot\max \left\{\frac{\beta}{(1-\beta)^2}, 1 +\frac{1}{\left(1-\left(\frac{2.5\beta +\sqrt{2.25\beta^2+2\beta}}{2}\right)^2\right)}\right\},
\end{equation}	
\end{small}
then $a$ also satisfies \eqref{1st_condition}.
Based on \eqref{explicit_parameter}, we have
\begin{equation*}
	a =	\Theta\left(\frac{(1-\beta)^2}{L}\right),
\end{equation*}
{whose size} is comparable to the DGD algorithms \cite{nedic2017achieving,qu2017harnessing,xu2015augmented} in the literature.}

Next, we consider an unconstrained version of Problem \eqref{constrained_optimization}, i.e., $\mathcal{X}=\mathbb{ R}^m$, where the rate of convergence is stated for $f(\tilde{x}_{i}^{(t)})-f(x^*)$.

\begin{corollary}\label{unconstrained corollary}
	Suppose 
	the premise of Theorem \ref{inexact_thm} holds.
If	$\mathcal{X}=\mathbb{ R}^m$ in \eqref{constrained_optimization} and $d(x)=\lVert x\rVert^2/2$ in \eqref{primal_smooth}, and
	%	\begin{equation}\label{step size_FDDA_unconstrained}
	%	{aL} + \frac{3aL}{\big(1-\rho({\bf M})\big)^2} \leq \frac{1}{2},
	%	\end{equation} 
			\begin{equation}\label{step size_FDDA_unconstrained}
		%		{a}<\frac{(1-\beta)^2}{2\beta L}, 
		\frac{1}{a}>2L\cdot\max \left\{\frac{\beta}{(1-\beta)^2}, 1 +\frac{8 }{3\left(1-(\rho({\bf M}))^2\right)}\right\},
	\end{equation}	
%	\begin{equation}\label{step size_FDDA_unconstrained}
%		1+\tau+ \frac{5+\frac{1}{\tau}}{\big(1-\rho({\bf M})\big)^2} \leq \frac{1}{aL},
%	\end{equation} 
%	for some $\tau>0$,
	then 
	\begin{equation}\label{rate_FDDA_unconstrained}
		\begin{split}
			f(\tilde{x}_{i}^{(t)})  -f(x^*)\leq  
			\frac{1}{t}\left( \frac{n}{2a}\lVert x^* \rVert^2+ \frac{8\pi^2 }{3L\left(1-(\rho({\bf M}))^2\right)} \right)
%			\frac{n^2\lVert x^*\rVert_2^2}{2at}.
			%\\
			%&+n  \sum_{k=0}^{t-1}\sum_{k=0}^{t-1}\langle  ag_{k}, {y}_{k+1}-x\rangle
			%\\
			%\leq& \sum_{k=0}^{t-1}\sum_{j=1}^{n}a\Big( f_j(x_{i,k+1}) -f_j(y_{k+1})+f_j(y_{k+1})- f_j(x) \Big)
		\end{split}
	\end{equation}
where $\tilde{x}_i^{(t)}=t^{-1}\sum_{\tau=1}^{t}{x}_i^{(\tau)}$ and $\pi^2$ is defined in \eqref{gradient_variance_t0}.
\end{corollary}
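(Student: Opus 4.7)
The plan is to leverage Theorem~\ref{inexact_thm} together with the key simplifications afforded by the unconstrained setting with $d(x)=\lVert x\rVert^2/2$. Under these hypotheses, $\nabla d^*$ reduces to the identity map on $\mathbb{R}^m$, so \eqref{primal_smooth} and \eqref{auxiliary_sequence} collapse to $x_i^{(t)}=-az_i^{(t)}$ and $y^{(t)}=-a\overline{z}^{(t)}$. This immediately yields the clean identity $\overline{x}^{(t)}:=\tfrac{1}{n}\sum_{i=1}^{n}x_i^{(t)}=y^{(t)}$ for every $t\ge 1$, and by linearity of averaging $\tfrac{1}{n}\sum_{i=1}^{n}\tilde{x}_i^{(t)}=\tilde{y}^{(t)}$. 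These identities allow us to transfer guarantees from the auxiliary sequence $\tilde{y}^{(t)}$ directly to each $\tilde{x}_i^{(t)}$ modulo a consensus residual.

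Next, I would split the error as
\begin{equation*}
f(\tilde{x}_i^{(t)})-f(x^*) = \bigl[f(\tilde{x}_i^{(t)})-f(\tilde{y}^{(t)})\bigr] + \bigl[f(\tilde{y}^{(t)})-f(x^*)\bigr],
\end{equation*}
and control the second bracket by the $C/(at)$ rate of Theorem~\ref{inexact_thm}. For the first bracket, I would invoke $L$-smoothness (Assumption~\ref{lipschitzassumption}(iii)) to obtain
\begin{equation*}
f(\tilde{x}_i^{(t)})-f(\tilde{y}^{(t)}) \le \langle \nabla f(\tilde{y}^{(t)}), \tilde{x}_i^{(t)}-\tilde{y}^{(t)}\rangle + \tfrac{L}{2}\lVert \tilde{x}_i^{(t)}-\tilde{y}^{(t)}\rVert^2,
\end{equation*}
handle the inner product via Young's inequality, and exploit the standard smooth-convex identity $\lVert \nabla f(y)\rVert^2\le 2L(f(y)-f(x^*))$, which is available here precisely because $\mathcal{X}=\mathbb{R}^m$ makes $x^*$ an unconstrained minimizer of $f$. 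Chaining these three estimates produces
\begin{equation*}
f(\tilde{x}_i^{(t)}) - f(x^*) \le 2\bigl[f(\tilde{y}^{(t)})-f(x^*)\bigr] + L\lVert \tilde{x}_i^{(t)}-\tilde{y}^{(t)}\rVert^2,
\end{equation*}
into which one can directly plug the bounds \eqref{FDDA_rate} and \eqref{eq:x-y-dist} from Theorem~\ref{inexact_thm}, specialised to $d(x^*)=\lVert x^*\rVert^2/2$.

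The step I expect to be the main obstacle is the algebraic bookkeeping required to match the specific constants in \eqref{rate_FDDA_unconstrained}. The stepsize condition \eqref{step size_FDDA_unconstrained} is strictly tighter than the one in Theorem~\ref{inexact_thm} (factor $8/3$ in place of $8/9$ multiplying $(1-(\rho(\mathbf{M}))^2)^{-1}$); I expect this tightening is precisely what ensures that the coefficient $\gamma$ in \eqref{def:gamma} is bounded below favourably enough that the $LD/t$ residual produced by the $L\lVert \tilde{x}_i^{(t)}-\tilde{y}^{(t)}\rVert^2$ term can be consolidated into the coefficient $\frac{8\pi^2}{3L(1-(\rho(\mathbf{M}))^2)}$ of the target bound. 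If a direct combination of the two Theorem~\ref{inexact_thm} estimates does not recover \eqref{rate_FDDA_unconstrained} with the exact constants advertised, the fallback is to rerun the proof of Theorem~\ref{inexact_thm} under the unconstrained setting, exploiting $\nabla d^*=\mathrm{id}$ to streamline the Bregman and telescoping arguments and to arrive at the sharper constants in \eqref{rate_FDDA_unconstrained} without going through the auxiliary sequence $\tilde{y}^{(t)}$ at all.
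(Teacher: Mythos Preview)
Your primary route---split into $[f(\tilde x_i^{(t)})-f(\tilde y^{(t)})]+[f(\tilde y^{(t)})-f(x^*)]$, bound the first via smoothness, Young's inequality, and $\lVert\nabla f(y)\rVert^2\le 2L(f(y)-f^*)$, and then plug in \eqref{FDDA_rate} and \eqref{eq:x-y-dist}---is sound and does deliver an $\mathcal O(1/t)$ rate. But as you suspect, it will not reproduce the constants in \eqref{rate_FDDA_unconstrained}: already the $\lVert x^*\rVert^2$ coefficient you obtain is $2C/a$ with leading term $\lVert x^*\rVert^2/a$, whereas the corollary has $n\lVert x^*\rVert^2/(2a)$, and the $LD$ contribution drags in $(1-\rho(\mathbf M))^{-2}$ factors that the target does not contain.

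The paper takes a different route, and it is worth knowing because two of its ingredients are not visible from the statement of Theorem~\ref{inexact_thm}. First, rather than working with the time-averaged $\tilde x_i^{(t)}$, the paper bounds the gap at each time step and sums over \emph{all} agents: since $y^{(\tau)}=\overline{x}^{(\tau)}$, the cross terms $\langle\overline g^{(\tau)},x_i^{(\tau)}-y^{(\tau)}\rangle$ cancel upon summation over $i$, yielding
\[
\sum_{i=1}^n\bigl(f(x_i^{(\tau)})-f(y^{(\tau)})\bigr)\le 2L\lVert\mathbf x^{(\tau)}-\mathbf y^{(\tau)}\rVert^2,
\]
which is then summed over $\tau$ and fed into Lemma~\ref{consensus_error_lemma}. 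This replaces your Young/gradient-norm step and avoids the factor~$2$ in front of $f(\tilde y^{(t)})-f(x^*)$. Second, the paper does not use the final bound \eqref{FDDA_rate} but the intermediate inequality \eqref{eq:last} from the proof of Theorem~\ref{inexact_thm}, which retains the negative term $-\gamma n^{-1}\sum_\tau\lVert\mathbf y^{(\tau)}-\mathbf y^{(\tau-1)}\rVert^2$. Multiplying \eqref{eq:last} by $n/a$ and adding it to the inequality above lets the $\sum_\tau\lVert\mathbf y^{(\tau)}-\mathbf y^{(\tau-1)}\rVert^2$ terms cancel; the tightened stepsize \eqref{step size_FDDA_unconstrained} (with $8/3$ in place of $8/9$) is exactly what makes the combined coefficient nonnegative after the extra $2L$ from the display above passes through Lemma~\ref{consensus_error_lemma}. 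The $n/(2a)$ prefactor on $\lVert x^*\rVert^2$ then appears simply as $(n/a)\,d(x^*)$.

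So your fallback---``rerun the proof of Theorem~\ref{inexact_thm}''---is in fact what the paper does, but the two specific moves to internalize are (i)~sum over agents to kill the first-order term rather than invoking Young, and (ii)~reuse \eqref{eq:last} rather than the post-processed bounds \eqref{FDDA_rate}--\eqref{eq:x-y-dist}.
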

\begin{proof}
		The proof is given in Appendix B.
\end{proof}

\subsection{Accelerated Decentralized Dual Averaging}

To further speed up the convergence, we develop a decentralized variant of the accelerated dual averaging method in \eqref{ADA_iteration} and \eqref{projection_ADA}. Different from Algorithm \ref{FDDA}, {we consider building consensus among variables $\{v_i^{(t)},i=1,\cdots,n\}$ and propose the following iteration rule:
% increasing weights for new gradients entering the linear model of the objective should be used. Following \cite{cohen2018acceleration}, we consider an increasing sequence $a_t=a(t+1),{t\geq 0}$ for some $a>0$ such that the dual variable becomes $\sum_{k=0}^{t}a_k\nabla f(x_{k})$.
%However, when increasing weights are considered, the second-order consensus scheme \eqref{2nd_order_consensus} still cannot track the dual variable with sufficient accuracy, and after the prox-mapping in \eqref{primal_smooth}, large consensus errors still exist between primal variables and their mean, which probably cannot be compensated by the descent property of the mean variable in the analysis. This argument is illustrated in Fig. \ref{fig:DualError}. Alternatively, we consider a synchronization step for primal variables
\begin{subequations}\label{ADDA_iteration}
	\begin{align}
	u^{(t)}_{i}&=\frac{A_{t-1}}{A_{t}}\sum_{j=1}^np_{ij}v^{(t-1)}_{j}+\frac{a_{t}}{A_{t}}{w}_{i}^{(t-1)}  \label{update_u}\\
	v^{(t)}_{i}&=\frac{A_{t-1}}{A_{t}}\sum_{j=1}^np_{ij}v^{(t-1)}_{j}+\frac{a_{t}}{A_{t}}	{w}_{i}^{(t)}, \label{update_v}
	\end{align}
\end{subequations}
where 
\begin{equation} \label{projection_ADDA}
	{w}_{i}^{(t)}= \nabla d^*\left(  - \sum_{\tau=1}^{t}a_\tau q_{i}^{(\tau)}  \right),
\end{equation}
and 
\begin{equation}\label{consensus_ADDA}
	q_{i}^{(t)}=\sum_{j=1}^np_{ij}q_{j}^{(t-1)}+\nabla f_i(u^{(t)}_{i})-\nabla f_i(u^{(t-1)}_{i}).
\end{equation}
}
The overall algorithm is summarized in Algorithm \ref{ADDA}. 
%One of its round is demonstrated in Fig. \ref{fig:ADDA}. 
It is worth to mention that agents in Algorithm \ref{ADDA} consume the same communication resources as Algorithm \ref{FDDA} to achieve acceleration.

\begin{algorithm}[tb]
	\caption{Accelerated Decentralized Dual Averaging}
	\label{ADDA}
	\begin{algorithmic}
		\STATE {\bfseries Input:} $a>0$, $x^{(0)}\in\mathcal{X}$ and a strongly convex function $d$ with modulus $1$ such that \eqref{eq:initial-cond} holds
		% 		\begin{equation}
		% 		    \label{eq:initial-cond}
		% 		    x^{(0)} = \argmin_{x\in\mathbb{R}^m} d(x) \in \mbox{dom}(h) \ \ \mbox{and} \ \ d(x^{(0)}) = 0
		% 		\end{equation}
		\STATE {\bfseries Initialize:}  $A_1 =a_1= 2a$, $u_i^{(1)}=w_i^{(0)} = x^{(0)}$, $q_i^{(1)} = \nabla f_i(x^{(0)})$, and $v_i^{(1)}=w_i^{(1)} $ for all $i = 1, \dots, n$ 
		\FOR{$t=2,3,\cdots$}
		\STATE set $a_{t}=a_{t-1}+a$ and $A_{t}=A_{t-1}+a_{t}$
		%		\FORALLP{$i = 1,\dots,n$} 
		\STATE \emph{In parallel (task for agent $i$, $i = 1,\dots,n$)}
		\STATE collect $v_{j}^{(t-1)}$ and $q_{j}^{(t-1)}$ from all agents $j\in\mathcal{N}_{i}$
		\STATE update $u_i^{(t)}$ by \eqref{update_u}
		\STATE update $q_{i}^{(t)}$ by \eqref{consensus_ADDA}
		\STATE compute $w_i^{(t)}$ by \eqref{projection_ADDA}
						\STATE update $v_i^{(t)}$ by \eqref{update_v}
		\STATE broadcast $v_{i}^{(t)}$ and $q_{i}^{(t)}$ to all agents $j\in\mathcal{N}_{i}$
		%		\ENDFAP
		\ENDFOR
		%	\UNTIL{$noChange$ is $true$}
	\end{algorithmic}
\end{algorithm}

%\begin{algorithm}
%	\begin{algorithmic}[1]
%		\caption{{\rm Accelerated Decentralized Dual Averaging}}
%		\label{ADDA}
%		\STATE  Set $t=0$, $ {x}^{[1]}_{i,0}={x}^{[2]}_{i,0}=\hat {x}_{i,0}=\arg\min_{x\in\mathcal{X}}{d(x)}$, $d(\hat{x}_{i,0})=0$, $s_{i,0}=\nabla f_i(x^{[1]}_{i,0}),\forall i\in\mathcal{V}$. Choose a positive control sequence $a_{t}=a({t+1}), t\geq 1$.	
%		\WHILE  { { Convergence is not reached}}
%		\FOR{Each agent $i\in\mathcal{V}$ (in parallel)}  
%		\STATE  Receive $s_{j,t},x^{[2]}_{j,t}, \forall j\in\mathcal{N}_i$;
%		\STATE Perform local computation in \eqref{ADDA_iteration}, \eqref{projection_ADDA} and \eqref{consensus_ADDA};
%		\STATE Broadcast $s_{i,t+1},x^{[2]}_{j,t+1}, \forall j\in\mathcal{N}_i$ ;
%		\ENDFOR 
%		\STATE Set $t = t+1$.
%		\ENDWHILE
%	\end{algorithmic}
%\end{algorithm}

\begin{assumption}\label{boundedness}
	For the problem in \eqref{constrained_optimization}, the constraint set $\mathcal{X}$ is bounded with the following diameter:
	\begin{equation*}
		G = \max_{x,y\in\mathcal{X}}\lVert x-y \rVert.
	\end{equation*}
\end{assumption}

\begin{theorem}\label{ADDA_thm}
	For Algorithm \ref{ADDA}, 
	if Assumptions \ref{lipschitzassumption}, \ref{graphconnected}, and \ref{boundedness} are satisfied, and 
	\begin{equation}\label{ADDA_condition}
		a\leq\frac{1}{6L},
	\end{equation}
 then, for all $t\geq 1$, it holds that
	\begin{equation}\label{ADDA_rate}
		\begin{split}
			f(\overline{  v}^{(t)})-f(x^*)	\leq \frac{d(x^*)}{A_t}+\frac{t}{A_t}\left(\frac{2G(LC_p+C_g)}{\sqrt{n}}+\frac{6LC_p^2}{n}\right),
		\end{split}
	\end{equation}
	where $$C_p:=\lceil\frac{3}{1-\beta}\rceil\sqrt{n}G$$ and $$C_g:=2L\lceil\frac{3}{1-\beta}\rceil \frac{ \sqrt{n}G+C_p}{1-\beta}.$$ In addition, for all $t\geq 1$ and $i=1,\cdots, n$, we have
%	\begin{equation*}
%		\lVert \tilde{\bf v}^{(t)}  \rVert \leq\frac{a_{t}}{A_{t}}C_p
%	\end{equation*}
	\begin{equation}\label{consensus_error_ADDA}
	\left\lVert {  v}_i^{(t)}-\overline{  v}^{(t)} \right\rVert^2 \leq\frac{2aC_p}{A_{t}}.
\end{equation}

%	 constants $C_p$ and $C_g$ are defined in Lemma \ref{primal_error} and Lemma  \ref{dual_error_lemma}, respectively.
\end{theorem}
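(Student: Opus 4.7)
The plan is to separate the proof into (i) consensus-error bounds on the sequences $\{v_i^{(t)}\}$ and $\{q_i^{(t)}\}$, and (ii) a perturbed centralized ADA argument in the spirit of \cite{cohen2018acceleration} applied to the mean iterates $\overline{u}^{(t)}$, $\overline{v}^{(t)}$, $\overline{w}^{(t)}$, $\overline{q}^{(t)}$. Averaging \eqref{update_u}--\eqref{update_v} using the doubly stochastic property of $P$ shows that $\overline{u}^{(t)}$ and $\overline{v}^{(t)}$ exactly obey the centralized ADA recursion in \eqref{ADA_iteration} with $\overline{w}^{(t)}$ in place of the single dual-projected point; summing \eqref{consensus_ADDA} with the given initialization yields the gradient-tracking identity $\overline{q}^{(t)}=\frac{1}{n}\sum_i \nabla f_i(u_i^{(t)})$, so that $-\sum_\tau a_\tau \overline{q}^{(\tau)}$ acts as the centralized dual accumulation, perturbed only by gradient evaluations at the non-consensual points $\{u_i^{(\tau)}\}$.

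To establish \eqref{consensus_error_ADDA}, I would apply the projector $\Pi = I - \frac{1}{n}\mathbf{1}\mathbf{1}^\top$ to the stacked form of \eqref{update_v}. Because $\Pi P$ has spectral radius $\beta$ on $\mathrm{range}(\Pi)$ and since $w_i^{(t)}\in\mathcal{X}$ has diameter $G$ by Assumption \ref{boundedness}, the resulting recursion $\|\Pi\mathbf{v}^{(t)}\|_F \leq \beta\,(A_{t-1}/A_t)\|\Pi\mathbf{v}^{(t-1)}\|_F + (a_t/A_t)\sqrt{n}\,G$ telescopes into a geometric sum that yields the claimed bound; the ceiling $\lceil 3/(1-\beta)\rceil$ in $C_p$ emerges from matching the decay factor $\beta^k$ against the weight $a_{t-k}/A_{t-k}$ induced by the choice $a_t=a(t+1)$. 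An analogous argument for \eqref{consensus_ADDA}, combined with $\|\nabla f_i(u_i^{(t)})-\nabla f_i(u_i^{(t-1)})\|\leq L\|u_i^{(t)}-u_i^{(t-1)}\|$ and the expansion of the increment $u_i^{(t)}-u_i^{(t-1)}$ through \eqref{update_u} into network-averaged $v$-terms and $w$-differences (each bounded by $G$ or by the already-derived $v$-consensus bound), closes the recursion and produces the uniform bound of order $C_g/\sqrt{n}$ on $\|q_i^{(t)}-\overline{q}^{(t)}\|$.

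For \eqref{ADDA_rate}, I would follow the centralized ADA potential argument applied to $\overline{v}^{(t)}$. Combining smoothness on the increment $\overline{v}^{(t)}-\overline{u}^{(t)}=(a_t/A_t)(\overline{w}^{(t)}-\overline{w}^{(t-1)})$, convexity at $\overline{u}^{(t)}$, and a three-point inequality for the mirror step associated with $\nabla d^*(-\sum_\tau a_\tau \overline{q}^{(\tau)})$, one obtains a per-step inequality that telescopes to $A_t(f(\overline{v}^{(t)})-f(x^*))\leq d(x^*)+E_t$. Two perturbation families populate $E_t$: the gradient mismatch $\|\overline{q}^{(t)}-\nabla f(\overline{u}^{(t)})\|\leq (L/\sqrt{n})\|\Pi\mathbf{u}^{(t)}\|_F$, controlled through the $v$-consensus bound by $LC_p/\sqrt{n}$ and producing the $2GLC_p/\sqrt{n}$ summand after being weighted by the diameter; and $\|\overline{w}^{(t)}-\nabla d^*(-\sum_\tau a_\tau \overline{q}^{(\tau)})\|$, controlled via Lemma \ref{gamma_continuity} and the $q$-consensus bound, producing the $2GC_g/\sqrt{n}$ summand. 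The residual quadratic $(La_t^2/2A_t)\|\overline{w}^{(t)}-\overline{w}^{(t-1)}\|^2$ created by the smoothness step must be absorbed by the strong-convexity surplus from $d$; the constant $1/6$ in \eqref{ADDA_condition} is precisely what enables this absorption, leaving the $6LC_p^2/n$ residue after summation over $\tau=1,\dots,t$.

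The hard part will be the consolidated bookkeeping in the last step: tracing how the $q$-consensus error propagates through both the nonlinear mirror map \eqref{projection_ADDA} and the quadratic surplus, while keeping the step-size condition \eqref{ADDA_condition} independent of $\beta$. The structural reason this closes cleanly is that the boundedness of $\mathcal{X}$ (Assumption \ref{boundedness}) allows each consensus recursion to be terminated by the uniform diameter $G$ instead of through a coupled Lyapunov function entangling the $v$- and $q$-errors, so the two consensus bounds can be slotted in as black boxes inside the ADA telescoping.
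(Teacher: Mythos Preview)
Your part (i) is on the right track and essentially matches the paper's Lemmas~\ref{primal_error} and~\ref{dual_error_lemma}, including the tracking identity $\overline q^{(t)}=\frac{1}{n}\sum_i\nabla f_i(u_i^{(t)})$. One correction: the $q$-consensus bound you obtain is not ``uniform'' but decays like $(a_t/A_t)C_g$; that decay rate is essential below.

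The genuine gap is in part (ii). The averaged point $\overline{w}^{(t)}=\frac{1}{n}\sum_i \nabla d^*\bigl(-\sum_\tau a_\tau q_i^{(\tau)}\bigr)$ is \emph{not} itself a mirror step, so to invoke a three-point inequality you are forced to introduce the auxiliary $\hat w^{(t)}:=\nabla d^*\bigl(-\sum_\tau a_\tau \overline q^{(\tau)}\bigr)$. But Lemma~\ref{gamma_continuity} then only gives
\[
\bigl\|\overline w^{(t)}-\hat w^{(t)}\bigr\|\;\le\;\frac{1}{n}\sum_i\Bigl\|\sum_{\tau=1}^{t} a_\tau\bigl(q_i^{(\tau)}-\overline q^{(\tau)}\bigr)\Bigr\|\;\le\;\sum_{\tau=1}^{t}\frac{a_\tau^2}{A_\tau}\,C_g\;\le\;2at\,C_g,
\]
which grows linearly in $t$ because the consensus error is \emph{accumulated} through the dual sum. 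When this feeds into the telescoped linear term $a_\tau\langle\overline q^{(\tau)},\overline w^{(\tau)}-\hat w^{(\tau)}\rangle$, the contribution to $E_t$ is of order $\sum_{\tau\le t} a_\tau\cdot a\tau=O(a^2t^3)$; dividing by $A_t\sim at^2$ yields $O(at)$, which diverges rather than producing the constant $2GC_g/\sqrt{n}$ you claim. The same mismatch breaks your absorption step: the negative surplus from the three-point inequality is $-\tfrac12\|\hat w^{(\tau)}-\hat w^{(\tau-1)}\|^2$, whereas the smoothness penalty involves $\|\overline w^{(\tau)}-\overline w^{(\tau-1)}\|^2$, and swapping between them again costs accumulated errors.

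The paper sidesteps this by never introducing $\hat w$. It writes
\[
a_\tau\bigl\langle\nabla f(\overline u^{(\tau)}),\overline w^{(\tau)}-x^*\bigr\rangle
=\frac{1}{n}\sum_{i} a_\tau\bigl\langle q_i^{(\tau)},w_i^{(\tau)}-x^*\bigr\rangle
+\frac{1}{n}\sum_{i} a_\tau\bigl\langle\nabla f(\overline u^{(\tau)})-q_i^{(\tau)},w_i^{(\tau)}-x^*\bigr\rangle,
\]
and applies the dual-averaging inequality (Lemma~\ref{inexact_dual_averaging_inter}) \emph{per agent} to the first sum, since each local $w_i^{(\tau)}$ genuinely is the mirror step driven by $q_i^{(\tau)}$. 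This also delivers the surplus $-\frac{1}{2n}\sum_i\|w_i^{(\tau)}-w_i^{(\tau-1)}\|^2$, which---after bounding $\|\overline u^{(\tau)}-\overline v^{(\tau)}\|^2$ through the individual $w$-increments and the $u,v$-consensus errors---matches the smoothness penalty and is exactly what makes the $1/6$ in \eqref{ADDA_condition} suffice. The second sum carries only the \emph{current-step} mismatch $\|\nabla f(\overline u^{(\tau)})-q_i^{(\tau)}\|\le (a_\tau/A_\tau)(LC_p+C_g)/\sqrt{n}$, so that after weighting by $a_\tau G$ and summing one obtains the factor $\sum_\tau a_\tau^2/A_\tau\le 2at$ visible in \eqref{ADDA_rate}. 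The structural point you are missing is that the dual-averaging lemma must be invoked where a genuine mirror step exists---at each agent---and then averaged, not applied at the nonlinear average.
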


\begin{proof}
	The proof is postponed to Appendix C.
\end{proof}

For Algorithm \ref{ADDA} and Theorem \ref{ADDA_thm}, the following remarks are in order.

%	\textbf{i) Comparison with Algorithm \ref{FDDA}}: 

\textbf{i) Comparison with existing accelerated algorithms.} Accelerated methods for decentralized constrained optimization are rarely reported in the literature. Recently, the authors in \cite{li2020decentralized} developed the APM algorithm, where the iteration rule reads
\begin{subequations}
	\begin{align}
		y_{i}^{(t)} =&{x}_{i}^{(t)}+\frac{\theta_t(1-\theta_{t-1})}{\theta_{t-1}}\left({x}_{i}^{(t)}-{x}_{i}^{(t-1)}\right) \\
		{s}_{i}^{(t)}=& \nabla f_i(y_{i}^{(t)})+\frac{\beta_0}{\theta_t}\sum_{i=1}^{n}p_{ij}\left( y_{i}^{(t)}-y_{j}^{(t)}\right) \label{APM_gradient} \\
		{x}_{i}^{(t+1)} =&\arg\min_{{x}\in\mathcal{X}}\left\lVert x-y_{i}^{(t)}+\frac{s_{i}^{(t)}}{L+\beta_0/\theta_t}\right\rVert^2 \label{APM_proj}
	\end{align}
\end{subequations}
where $\beta_0={L}/{\sqrt{1-\lambda_2(P)}}$ and $\theta_k$ is a decreasing parameter satisfying $\theta_k={\theta_{k-1}}/{(1+\theta_{k-1})}$ with $\theta_0=1$. Letting $\hat{s}_{i}^{(t)}=\theta_ks_{i}^{(t)}$, we can equivalently rewrite \eqref{APM_gradient} and \eqref{APM_proj} as
\begin{equation*}
	\begin{split}
		\hat{s}_{i}^{(t)}=& {\theta_t}\nabla f_i(y_{i}^{(t)})+{\beta_0}\sum_{i=1}^{n}p_{ij}\left( y_{i}^{(t)}-y_{j}^{(t)}\right)  \\
		x_{i}^{(t+1)} =&\arg\min_{{x}\in\mathcal{X}}\left\lVert x-y_{i}^{(t)}+\frac{\hat{s}_{i}^{(t)}}{L\theta_t+\beta_0}\right\rVert^2,
	\end{split}
\end{equation*}
from which we can see that new gradients are assigned with decreasing weights, whereas increasing weights are used for ADDA in \eqref{projection_ADDA}. 
{The reason for such different choices of parameters may be two-fold. First, parameter choices in (centralized) primal gradient descent and dual averaging methods are intrinsically different. Second, APM gradually increases the penalty parameter $1/\theta_t$ in order to enforce consensus, which essentially dilutes the weight for gradients, as shown above.}
We will show in simulation that  decreasing weights over time slows down convergence.
%In addition to this, APM also mixes the vectors from two spaces as in EXTRA.
%	\begin{equation*}
%		\begin{split}
%			{ u}_i^{(t)} &= \sum_{j=1}^{n}p_{ij}\left({x}_i^{(t-1)}-a\left(\nabla f_i(x_i^{(t-1)}) + \hat{y}_i^{(t-1)} \right) \right) \\
%			x_i^{(t)} & = u_i^{(t)}+  \alpha_{t-1} (u_i^{(t)}-u_i^{(t-1)}) \\
%			\hat{x}_i^{(t)} & = \sigma_{t-1} x_i^{(t)}+(1-\sigma_{t-1})u_i^{(t)} \\
%			y_i^{(t)} & = y_i^{(t-1)}+\tau_{t-1}\left( \hat{x}_i^{(t)} - \sum_{j=1}^{n} p_{ij}  \hat{x}_j^{(t)}\right) \\
%			\hat{y}_i^{(t)} & = {y}_i^{(t)}  +\beta_{t-1} (	\hat{y}_i^{(t)}- 	\hat{y}_i^{(t-1)})
%		\end{split}
%	\end{equation*}
There are also a few other accelerated decentralized methods such as \cite{qu2019accelerated,xu2020accelerated}, however they do not apply to constrained problems.

	{\textbf{ii) Discussion about optimality.}
For ADDA, the rate of convergence is proved to be
	\begin{equation*}
		\mathcal{O}(1)\left(  \frac{1}{t^2}+ \frac{1}{t(1-\beta)^2} \right).
	\end{equation*}
	In light of the lower bound in \cite{xu2020accelerated}, it is not optimal in terms of the dependence on $\beta$.
	%, i.e., communication times. 
	In particular, the dominant term of the error in $\mathcal{O}(1/(t(1-\beta)^2))$ becomes larger as $
	\beta$ grows, i.e., the network becomes more sparsely connected.
	%		When the network is poorly connected, the dominant term of the error in $\mathcal{O}(1/(t(1-\beta)^2))$ becomes large. 
	This is mainly because we consider a one-consensus-one-gradient update in the algorithm. However, extending the algorithm in \cite{xu2020accelerated} to handle constraints may require further investigation.
	In the simulation section, we demonstrate the superiority of ADDA over existing decentralized constrained optimization algorithms.}

\section{Simulation}
{In this section, we verify the proposed methods by applying them to solve the following constrained LASSO problems:
\begin{equation*}
	\min_{x\in\mathbb{R}^m}  \left\{f(x)=\frac{1}{2n}\sum_{i=1}^{n}\lVert M_ix - c_i \rVert^2 \right\}, \quad \mathrm{s.t.} \,\, \lVert x \rVert_1 \leq R
\end{equation*} 
where $M_i\in \mathbb{R}^{p_i\times m}$, $c_i\in\mathbb{R}^{p_i} $,
and $R$ is a constant parameter that defines the constraint. 
In the simulation, each agent $i$ has access to a local data tuple $(y_i,A_i)$ and $R$.
Two different problem instances characterized by both real and synthetic datasets are considered. 
%The simulations are performed on macOS Big Sur with an Intel I9 processor of 2.3 GHz and 16 GB RAM.

% $x^*\in\mathbb{R}^{m}$ is a sparse vector, and $b_i\in\mathbb{R}^{m}$ is the additive Gaussian noise with zero mean and certain variance.
%%Usually, $p_i\ll m$ and $x^*$ is sparse. 
%
%In the simulation, each agent $i\in\mathcal{V}$ has access to a local data tuple $(y_i,A_i)$ satisfying the following equation
%\begin{equation*}
%y_i=A_ix^*+b_i,
%\end{equation*} 

%To recover $x^*$,
%the following decentralized optimization problem is constructed
%\begin{equation*}
%\min_{x\in\mathbb{R}^m}  \frac{1}{n}\sum_{i=1}^{n}\frac{1}{2}\|y_i-A_ix \|^2, \quad \mathrm{s.t.} \,\, \lVert x \rVert_1 \leq R.
%\end{equation*} 

%{\bf Real dataset.} 
\subsection{Case I: Real Dataset}
In this setting, we use sparco7 \cite{van2007sparco, wai2017decentralized} to define the LASSO problem, and consider a cycle graph and a complete graph of $n=50$ nodes.
%The communication network is described by an Erdos-Renyi graph, where the probability for a connection between any two agents is set as $0.1$ \cite{shi2015extra}. 
The corresponding weight matrix $P$ is determined by following the Metropolis-Hastings rule \cite{xiao2004fast}.
Each local measurement matrix $M_i\in\mathbb{R}^{12\times 2560}$, and the local  corrupted measurement $c_i\in\mathbb{ R}^{12}$. The constraint parameter is set as $R = 1.1\cdot\lVert x_{{g}}  \rVert_1$, where $x_{{g}}$ with $\lVert x_g \rVert_0= 20$ denotes the unknown variable to be recovered via solving LASSO. In this case, the simulation experiments were performed using MATLAB R2020b.

For comparison, the PG-EXTRA method in \cite{shi2015proximal} and the APM method in \cite{liu2018accelerated}
%, and the PG-EXTRA in \cite{shi2015proximal} 
are simulated. 
For their algorithmic parameters, the step size for PG-EXTRA is set as $10^{-4}$, and the parameters for APM are set as $L=250$ and $\beta_0={L}/{\sqrt{1-\lambda_2(P)}}$. 
For DDA and ADDA in this work, we use $a=5\cdot10^{-4}$ and $a_t=(t+1)\cdot10^{-4}$, respectively, and $\lVert x \rVert^2/2$ as the prox-function.
%We further consider a stochastic setting to demonstrate the efficiency of FDDA, where all the communication links in the original graph stay active with probability $0.4$.
The projection onto an $l_1$ ball is carried out via the algorithm in \cite{condat2016fast}.
All the methods are initialized with
%and PG-EXTRA 
$x_{i}^{(0)}=0, \forall i \in\mathcal{V}$.

The performance of four algorithms are displayed in Figs. \ref{fig:obj} and \ref{fig:mse}. In Fig. \ref{fig:obj}, the performance is evaluated in terms of the objective error  $f(\frac{1}{n}\sum_{i=1}^{n}x_i^{(t)})-f(x^*)$, where $x^*$ is identified using CVX \cite{grant2014cvx}. It demonstrates that the DDA method outperforms other methods when the graph is a cycle. As the graph becomes denser, i.e, complete graph, the convergence of all algorithms becomes faster. Among them, the ADDA method demonstrates the most significant improvement. This is in line with Theorem \ref{ADDA_thm}, where the network connectivity impacts the convergence error in $\mathcal{O}(1/t)$ as opposed to $\mathcal{O}(1/t^2)$. In Fig. \ref{fig:mse}, we compare the trajectories of consensus error, i.e., $\sqrt{\sum_{i=1}^{n}\lVert x_i^{(t)} - n^{-1}\sum_{j=1}^nx_j^{(t)}\rVert^2}$, by all methods. When the graph is a cycle, the APM and PG-EXTRA have smaller consensus error than the developed methods, mainly because they build consensus directly among variables $\{x_i^{(t)}:i=1,\cdots, n\}_{t\geq 0}$. When the graph is complete, the consensus error by the proposed DDA method vanishes because of the conservation property established in Lemma \ref{conservation_property} and the complete graph structure.}

%, while the primal residual $\lVert {\bf x}_t-{\bf 1}\otimes x^*\rVert_2$ is considered in Fig. \ref{fig:mse}. The results suggest that ADDA converges faster than APM, primarily because the weight for the gradient information gradually decreases in APM. For methods with constant step sizes, FDDA in both deterministic and stochastic settings outperforms PG-EXTRA. 
%
%
%To compare performances of ADDA in different network configurations, we consider networks with connectivity ratio $r=0.2$, $0.4$ and $0.6$, where $r$ is defined as the actual number of links divided the number of all possible links. Note that the same control sequence $a_t$ is considered. The results are plotted in Figs. \ref{fig:robj} and \ref{fig:rmse}. It can be observed from them that with a higher connectivity, ADDA converges slightly faster. This is in line with Theorem \ref{ADDA_thm} that although the sufficient condition for step size does not rely on the graph, a sparser network can give rise to a larger consensus error and therefore a larger error bound.

{\subsection{Case II: Synthetic Dataset}
%{\bf Synthetic dataset.} 
For the synthetic dataset, the parameters are set as $n=8$, $m=30000$, $p_i=2000, \forall i\in\mathcal{V}$, and the data is generated in the following way.
First, each local measurement matrix $M_i$ is randomly generated where each entry follows the normal distribution $\mathcal{N}(0,1)$. Next, each entry of the sparse vector $x_{{g}}$ to be recovered via LASSO is randomly generated from the normal distribution $\mathcal{N}(0,1)$ with $\lVert x_{{g}} \rVert_0=1500$. Then the corrupted measurement $c_i$ is produced based on
\begin{equation*}
	c_i=M_ix_g+b_i
\end{equation*}
where $b_i$ represents the Gaussian noise with zero mean and variance $0.01$. The constraint parameter is set as $R= 1.1\cdot\lVert x_g \rVert_1$. 
%In this setting, we consider a system of $8$ cores/nodes, where each node $i$ is connected to the subset of nodes $\{1+ i \, \mathrm{mod} \,8, 1+ (i+3) \, \mathrm{mod} \, 8, 1+ (i+6) \, \mathrm{mod} \, 8\}$. 
For this setting, we employed the message passing interface (MPI) in Python 3.7.3 to simulate a network of $8$ nodes, where each node $i$ is connected to a subset of nodes $\{1+ i \, \mathrm{mod} \,8, 1+ (i+3) \, \mathrm{mod} \, 8, 1+ (i+6) \, \mathrm{mod} \, 8\}$. For comparison, the proposed methods are compared to their centralized counterparts. The parameters for dual averaging and accelerated dual averaging are set as $a=1/(3\cdot10^5)$ and $a_t = a(t+1)$, respectively. Similarly, the function $\lVert x \rVert^2/2$ is used as a prox-function, and the algorithms are initialized with
%and PG-EXTRA 
$x_{i}^{(0)}=0, \forall i \in\mathcal{V}$.

The performance of the developed algorithms and their centralized counterparts is illustrated in Fig. \ref{fig:robj}. In particular, the performance is evaluated in terms of objective function value versus computing time.
It demonstrates that the proposed methods outperform the corresponding centralized algorithms in the sense that the decentralized algorithms consume less computing time to reach the same degree of accuracy than their centralized counterparts. }
%It can be expected that a more significant difference can be o

\begin{figure}
	\begin{center}
		\includegraphics[width=8.8cm]{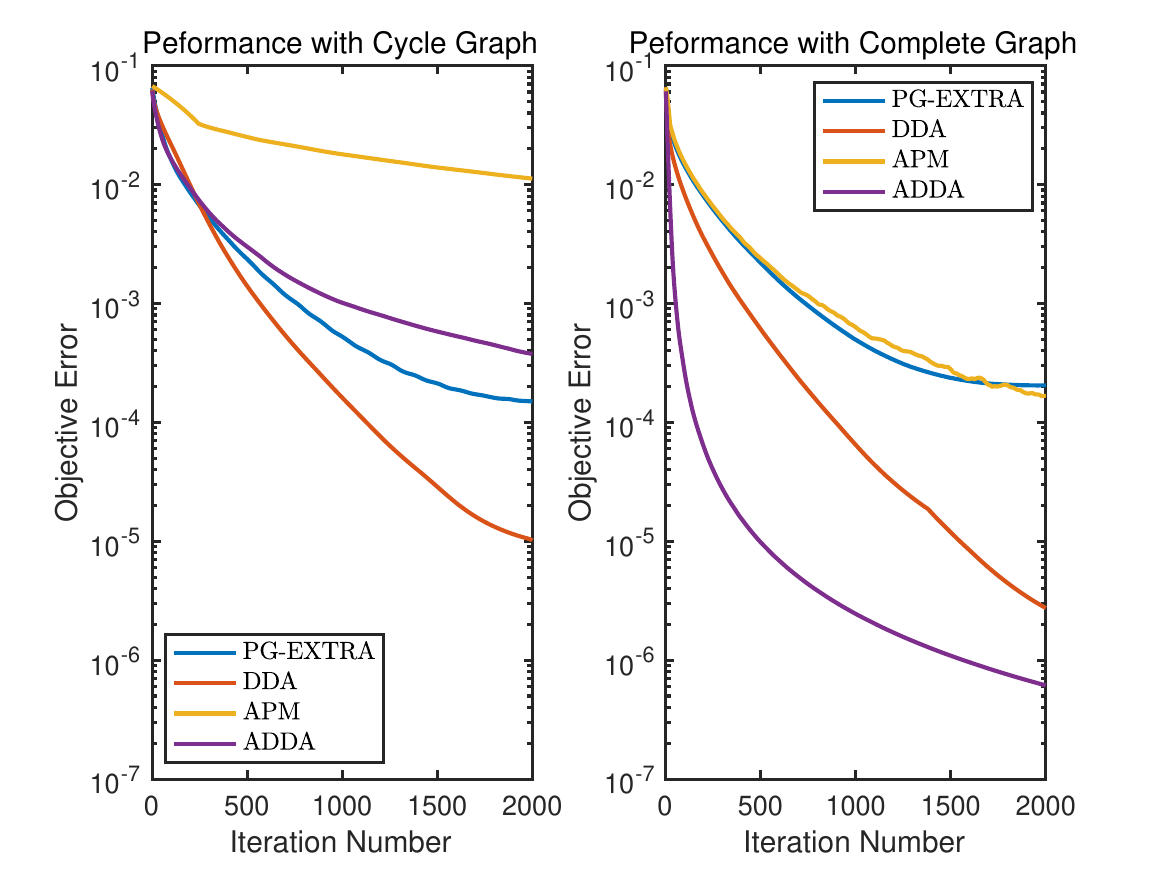}    % The printed column width is 8.4 cm.
		\caption {Comparison of objective error in Case I. }
		
		\label{fig:obj}
	\end{center}
\end{figure}

\begin{figure}
	\begin{center}
		\includegraphics[width=8.8cm]{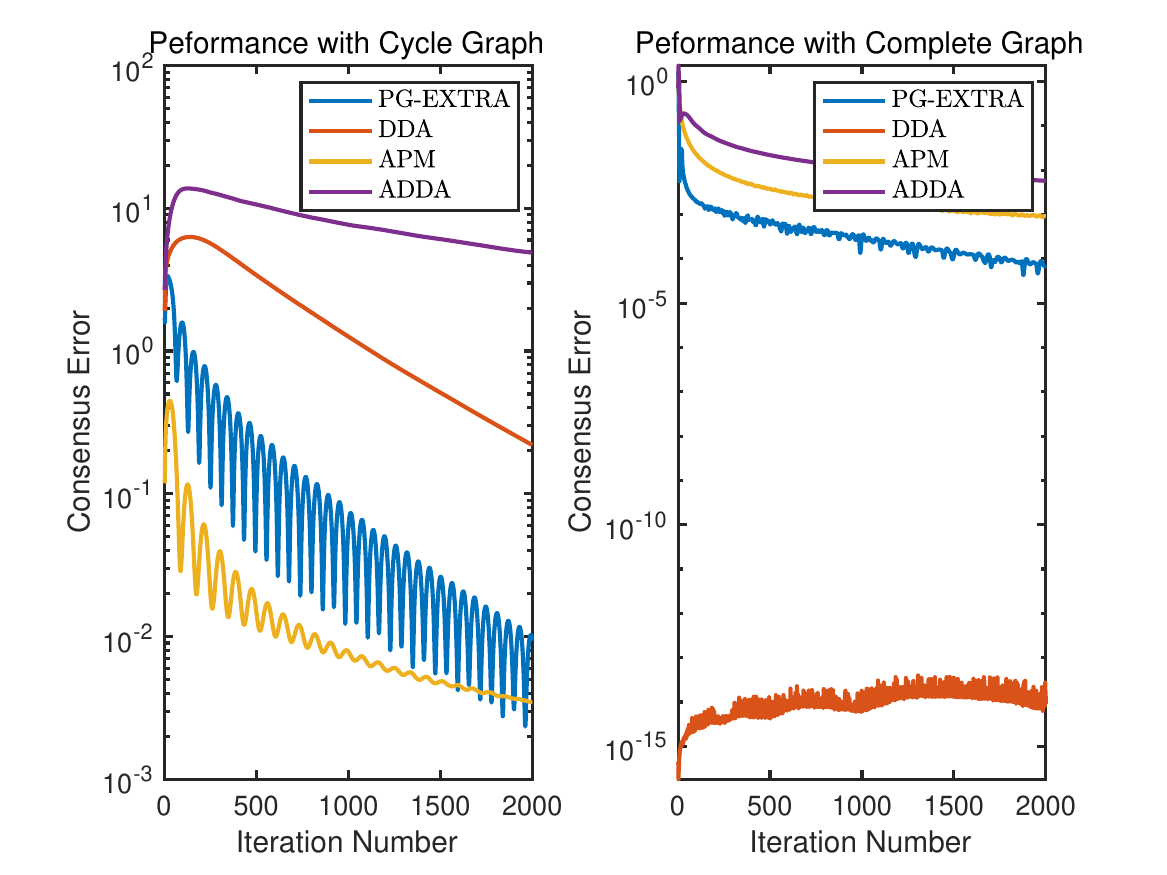}    % The printed column width is 8.4 cm.
		\caption {Comparison of consensus error in Case I. }
		
		\label{fig:mse}
	\end{center}
\end{figure}

%To accommodate the theoretical results developed therein, the step size for DPG is chosen as $\frac{1}{\sqrt{t}}$; the control sequence in DDA is set as $a_t = \frac{1}{\sqrt{t}}$.
%%; the step size for PG-EXTRA is selected as  $\frac{1}{m}$. 
%The control sequence for the proposed new DDA (N-DDA) is set as $a=\frac{1}{m}$. 

\begin{figure}
	\begin{center}
		\includegraphics[width=8.8cm]{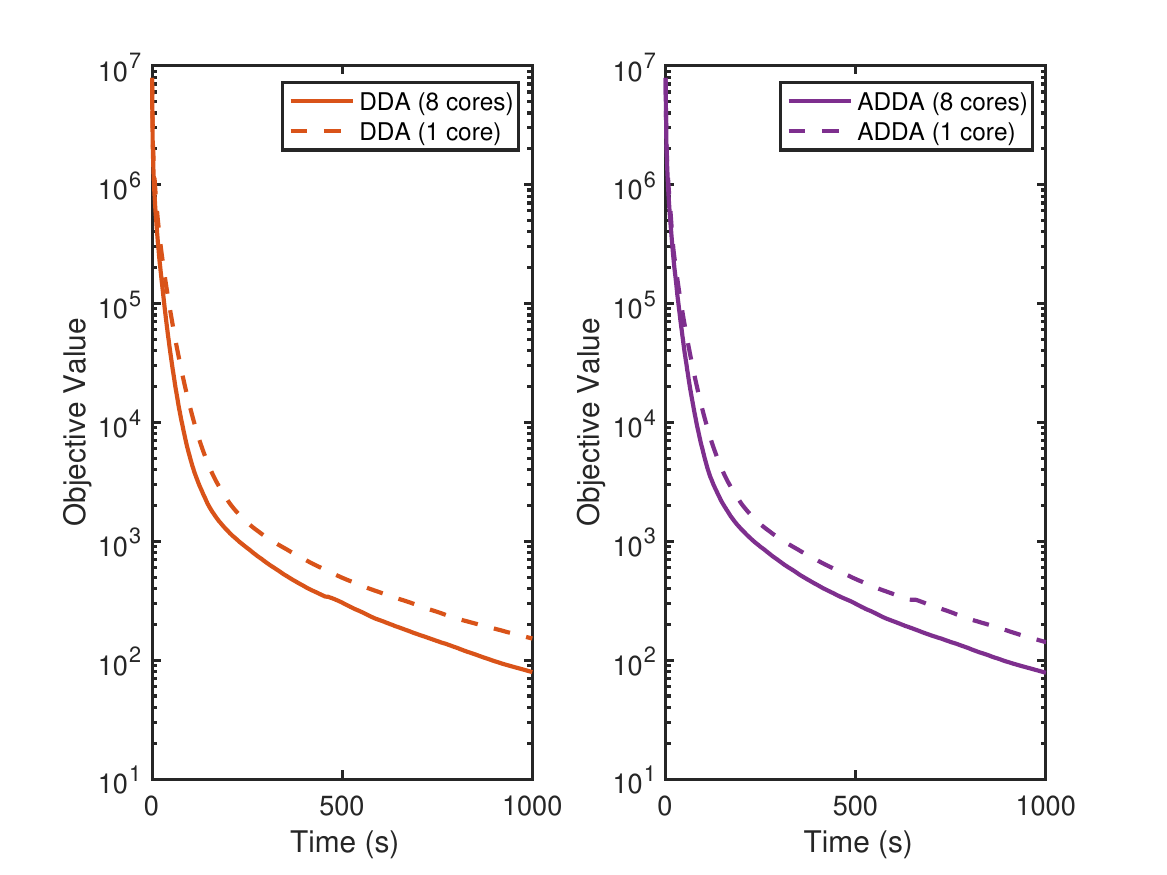}    % The printed column width is 8.4 cm.
		\caption {Comparison of objective value in Case II. }
		
		\label{fig:robj}
	\end{center}
\end{figure}

\section{Conclusion}
In this work, we have designed two DDA algorithms for solving decentralized constrained optimization problems with improved rates of convergence. In the first one, a novel second-order dynamic average consensus scheme is developed, with which each agent locally generates a more accurate estimate of the dual variable than existing methods under mild assumptions. This property enables each agent to use large constant weight in the local dual average step, and therefore improves the rate of convergence. In the second algorithm, each agent retains the conventional first-order dynamic average consensus method to estimate the average of local gradients. Alternatively, the extrapolation technique together with the average consensus protocol is used to achieve acceleration over a decentralized network. 
% each of which involves two rounds of communication and a projection step. The first method, abbreviated as FDDA, has a convergence rate of $O(\frac{1}{t})$ and is applicable to problems defined over stochastic networks, provided that a sufficient condition for the step size, Lipschitz modulus of the gradient, and the spectral radius of the weight matrix is satisfied. The second method ADDA has an $O(1)(\frac{1}{t^2}+\frac{1}{t})$ convergence rate, where the condition on the step size only depends on the Lipschitz constant of the gradient. In contrast to FDDA, the dependence on network connectivity is reflected by the rate constant in ADDA. 

This work opens several avenues for future research.
In this work, we focus on the basic setting with time-invariant bidirectional communication networks. We believe that the consensus-based dual averaging framework can be extended to tackle decentralized \emph{constrained} optimization in complex networks, e.g., directed networks \cite{pu2018push} and time-varying networks \cite{nedic2017achieving}.
Furthermore, we expect that our approach, as demonstrated by its centralized counterpart, i.e., follow-the-regularized-leader, may deliver superb performance in the online optimization setting \cite{yi2020distributed}. 
%We leave them as future work.
 
% Future directions of study include extending the DDA framework to solve strongly convex problems with linear convergence.
% Future works include extensions of ADDA to unconstrained problems and the case with stochastic communication.

% if have a single appendix:
%\appendix[Proof of the Zonklar Equations]
% or
%\appendix  % for no appendix heading
% do not use \section anymore after \appendix, only \section*
% is possibly needed

% use appendices with more than one appendix
% then use \section to start each appendix
% you must declare a \section before using any
% \subsection or using \label (\appendices by itself
% starts a section numbered zero.)
%

\appendices
\section{Roadmap for the proofs}
\begin{figure}
	\begin{center}
		\includegraphics[width=8.7cm]{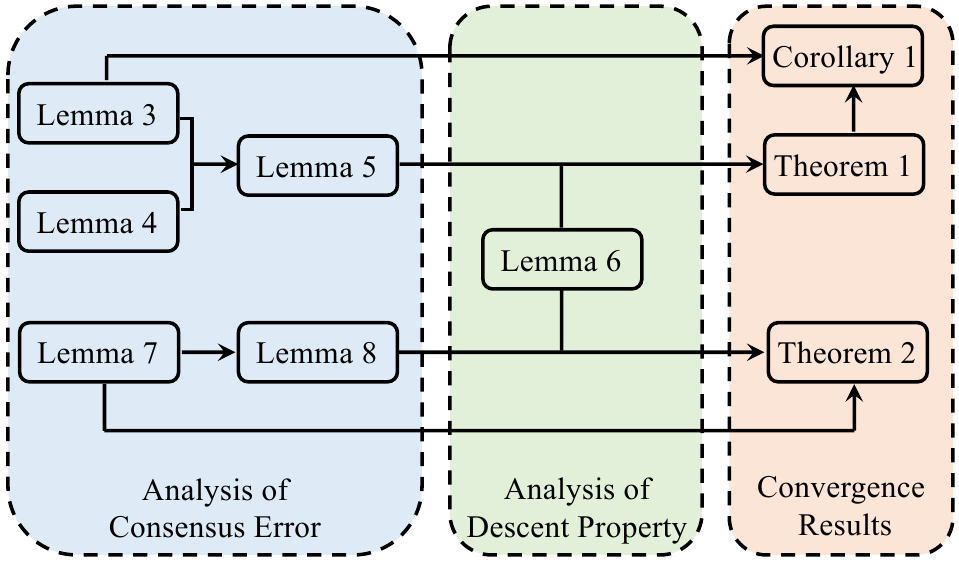}    % The printed column width is 8.4 cm.
		\caption {Relation among the convergence results. }
		
		\label{roadmap}
	\end{center}
\end{figure}

Before proceeding to the proofs, we present Fig. \ref{roadmap} to illustrate how they relate to each other.

\section{Proof of Theorem \ref{inexact_thm}}

\subsection{Preliminaries}

We introduce several notations to facilitate the presentation of the proof. Let 
\begin{equation*}
	{\bf x}^{(t)}=\begin{bmatrix}
		x_{1}^{(t)} \\ \vdots \\ x_{n}^{(t)}
	\end{bmatrix}, \quad
	{\bf z}^{(t)}=\begin{bmatrix}
		z_{1}^{(t)} \\ \vdots \\ z_{n}^{(t)}
	\end{bmatrix},\quad {\bf s}^{(t)}=\begin{bmatrix}
		s_{1}^{(t)} \\ \vdots \\ s_{n}^{(t)}
	\end{bmatrix},
\end{equation*}

\begin{equation*}
	{\bf y}^{(t)}=\begin{bmatrix}
		y^{(t)} \\ \vdots \\ y^{(t)}
	\end{bmatrix}, \quad {\bf \nabla}^{(t)}=\begin{bmatrix}
		\nabla f_1(x_{1}^{(t)})\\ \vdots \\ 	\nabla f_n(x_{n}^{(t)})
	\end{bmatrix},
\end{equation*}

\begin{equation*}
	\overline{g}^{(t)}=\frac{1}{n}\sum_{i=1}^{n}\nabla f_i(x_{i}^{(t)}), \, \overline{s}^{(t)}=\frac{1}{n}\sum_{i=1}^{n}s_i^{(t)}, \, \overline{z}^{(t)}=\frac{1}{n}\sum_{i=1}^{n}z_i^{(t)}.
\end{equation*}
Using these notations, we express \eqref{2nd_order_consensus} in the following compact form
\begin{subequations}
	\begin{align}
		{\bf z}^{(t)} &= {\bf P} \left(  {\bf z}^{(t-1)} +   {\bf s}^{(t-1)}  \right), \label{consensus_compact_z}\\	
		{\bf s}^{(t)} &= {\bf P}   {\bf s}^{(t-1)} +  \nabla^{(t)}-\nabla^{(t-1)}, \label{consensus_compact_s}
	\end{align}
\end{subequations}
where ${\bf P} = P\otimes I$. Before proceeding to the proof of Theorem \ref{inexact_thm}, we present several technical lemmas. 

Recall a lemma from \cite{xu2015augmented}.
\begin{lemma}
	\label{pertubed_consensus}
	Suppose that $\{\varepsilon^{(t)}\}_{t\geq 0}$ and $\{\epsilon^{(t)}\}_{t\geq 0}$ are two sequences of positive scalars such that for all $t\geq 0$, 
	\begin{equation*}
		\varepsilon^{(t)}\leq \delta^{t} c+\sum_{\tau=0}^{t-1}\delta^{t-\tau-1}\epsilon^{(\tau)}
	\end{equation*}
	where $\delta\in(0,1)$ and $c\geq 0$ is a constant. Then, the following holds for all $t\geq 1$:
	\begin{equation*}
		\sum_{\tau=1}^{t}(\varepsilon^{(\tau)})^2\leq  \frac{2}{(1-\delta)^2}\sum_{\tau=0}^{t-1}(\epsilon^{(\tau)})^2+\frac{2c^2}{1-\delta^2}.
	\end{equation*}
\end{lemma}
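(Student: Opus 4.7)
The plan is to start from the pointwise bound on $\varepsilon^{(t)}$, square it, and then sum over $t$, using elementary inequalities to untangle the convolution-type expression on the right. First, I would apply the inequality $(a+b)^2\leq 2a^2+2b^2$ to the given hypothesis to obtain
\begin{equation*}
(\varepsilon^{(t)})^2 \leq 2\delta^{2t}(\varepsilon^{(0)})^2 + 2\Bigl(\sum_{\tau=0}^{t-1}\delta^{t-\tau-1}\epsilon^{(\tau)}\Bigr)^2,
\end{equation*}
so that the two terms of the claimed bound can be treated independently.

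Next, the central technical move is to control the squared convolution. I would split $\delta^{t-\tau-1}=\delta^{(t-\tau-1)/2}\cdot\delta^{(t-\tau-1)/2}$ and invoke Cauchy--Schwarz to obtain
\begin{equation*}
\Bigl(\sum_{\tau=0}^{t-1}\delta^{t-\tau-1}\epsilon^{(\tau)}\Bigr)^2 \leq \Bigl(\sum_{\tau=0}^{t-1}\delta^{t-\tau-1}\Bigr)\Bigl(\sum_{\tau=0}^{t-1}\delta^{t-\tau-1}(\epsilon^{(\tau)})^2\Bigr) \leq \frac{1}{1-\delta}\sum_{\tau=0}^{t-1}\delta^{t-\tau-1}(\epsilon^{(\tau)})^2,
\end{equation*}
where the final step uses the geometric series bound $\sum_{\tau=0}^{t-1}\delta^{t-\tau-1}\leq 1/(1-\delta)$. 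This is the step I expect to be most delicate, since a crude bound here would produce an extra factor of $t$ and destroy the sought estimate; the weighted Cauchy--Schwarz is what keeps the estimate independent of the horizon.

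With this in hand, I would sum the squared inequality from $t=1$ to $t=T$. The first contribution gives $2(\varepsilon^{(0)})^2\sum_{t\geq 1}\delta^{2t}\leq \frac{2\delta^2}{1-\delta^2}(\varepsilon^{(0)})^2\leq \frac{2}{1-\delta^2}(\varepsilon^{(0)})^2$, which is exactly the second term of the desired bound. For the second contribution, I would swap the order of summation to get
\begin{equation*}
\sum_{t=1}^{T}\sum_{\tau=0}^{t-1}\delta^{t-\tau-1}(\epsilon^{(\tau)})^2 = \sum_{\tau=0}^{T-1}(\epsilon^{(\tau)})^2\sum_{t=\tau+1}^{T}\delta^{t-\tau-1} \leq \frac{1}{1-\delta}\sum_{\tau=0}^{T-1}(\epsilon^{(\tau)})^2,
\end{equation*}
and combining with the $2/(1-\delta)$ prefactor already picked up yields the $\frac{2}{(1-\delta)^2}$ coefficient on $\sum_{\tau=0}^{T-1}(\epsilon^{(\tau)})^2$. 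Adding the two contributions recovers the claimed inequality for all $T\geq 1$, completing the proof.
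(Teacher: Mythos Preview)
Your argument is correct: the $(a+b)^2\le 2a^2+2b^2$ split, the weighted Cauchy--Schwarz on the convolution, and the exchange of summation all go through exactly as you describe and yield the stated constants. Note, however, that the paper does not actually prove this lemma; it is quoted verbatim from \cite{xu2015augmented} without proof, so there is no ``paper's own proof'' to compare against. Your self-contained derivation is therefore a genuine addition rather than a reproduction of anything in the present manuscript.
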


%$g_t=\frac{1}{n}\sum_{i=1}^{n}\nabla f_i(x_{i,t})$, $
%\overline{s}_t=\frac{1}{n}{{\bf 1}}^{\mathrm{T}}{\bf s}_{t}$, and $
%\overline{h}_t=\frac{1}{n}{{\bf 1}}^{\mathrm{T}}{\bf h}_{t}$. 

%The following conservation property holds true for both FDDA and ADDA.
%\newtheorem{lemma}{Lemma}
\begin{lemma} \label{conservation_property}
	%	 Consider the sequence \eqref{averaging_subgradient}. 
	For Algorithm \ref{FDDA}, we have that for any $t\geq 0$
	\begin{equation}\label{conservation}
		\overline{s}^{(t)}=\overline{g}^{(t)}, \quad 	\overline{z}^{(t+1)}=\sum_{\tau=0}^{t}\overline{s}^{(\tau)}.
	\end{equation}
	%\overline{s}^{(t)}=\overline{g}^{(t)}
	%	$ and
	%	$
	%	\overline{z}^{(t+1)}=\sum_{\tau=0}^{t}\overline{s}^{(t)}
	%	$.
	%	\overline{s}_{t+1}=g_{t+1}.
	%	$
\end{lemma}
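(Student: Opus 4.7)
\emph{Plan.} Both identities in \eqref{conservation} are consequences of the doubly stochastic structure of $P$ (Assumption \ref{graphconnected}(i)) applied to the compact recursions \eqref{consensus_compact_z}--\eqref{consensus_compact_s}, together with the initializations $z_i^{(0)}=0$ and $s_i^{(0)}=\nabla f_i(x^{(0)})$. The plan is to left-multiply each recursion by $(\mathbf{1}^{\mathrm{T}}/n)\otimes I$, use $\mathbf{1}^{\mathrm{T}}P=\mathbf{1}^{\mathrm{T}}$ to eliminate the mixing step, and then either iterate by induction or telescope.

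\emph{Step 1 (first identity).} Averaging \eqref{consensus_compact_s} over agents and using $\sum_i p_{ij}=1$ yields
\begin{equation*}
\overline{s}^{(t)} \;=\; \overline{s}^{(t-1)} + \overline{g}^{(t)} - \overline{g}^{(t-1)}.
\end{equation*}
Since $\overline{s}^{(0)} = \tfrac{1}{n}\sum_i \nabla f_i(x^{(0)}) = \overline{g}^{(0)}$ by initialization (noting $x_i^{(0)}=x^{(0)}$ for all $i$), a trivial induction gives $\overline{s}^{(t)}=\overline{g}^{(t)}$ for every $t\geq 0$.

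\emph{Step 2 (second identity).} Averaging \eqref{consensus_compact_z} over agents, again via $\mathbf{1}^{\mathrm{T}}P=\mathbf{1}^{\mathrm{T}}$, yields
\begin{equation*}
\overline{z}^{(t)} \;=\; \overline{z}^{(t-1)} + \overline{s}^{(t-1)}.
\end{equation*}
Telescoping from $\overline{z}^{(0)}=0$ gives $\overline{z}^{(t+1)} = \sum_{\tau=0}^{t} \overline{s}^{(\tau)}$, which is the claim.

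\emph{Obstacle.} There is essentially no technical difficulty: the lemma is a bookkeeping identity that follows mechanically from double stochasticity and the chosen initialization. The only subtle point worth flagging is the offset in the index of the second identity (it is $\overline{z}^{(t+1)}$, not $\overline{z}^{(t)}$), which comes from the fact that $z_i^{(0)}=0$ while $s_i^{(0)}$ already carries a nonzero gradient contribution; this is automatically handled by the telescoping in Step 2.
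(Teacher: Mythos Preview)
Your proposal is correct and follows essentially the same approach as the paper: both left-multiply the compact recursions by $(\mathbf{1}^{\mathrm{T}}/n)\otimes I$, invoke $\mathbf{1}^{\mathrm{T}}P=\mathbf{1}^{\mathrm{T}}$ to cancel the mixing, and then use induction/telescoping from the stated initializations. The only cosmetic difference is that the paper writes out the Kronecker-product algebra explicitly, whereas you summarize it in words.
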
 
\begin{proof}[Proof of Lemma \ref{conservation_property}]
	We prove by induction.
	For $t=0$, we readily have \eqref{conservation} satisfied since $s_i^{(0)}=\nabla f_i(x^{(0)})$ and $z_i^{0}=0$ for all $i$. Suppose that \eqref{conservation} holds for $t-1$. Using \eqref{consensus_compact_s},
	\begin{equation}\label{kron}
		(A\otimes B)(C\otimes D) = (AC\otimes BD),
	\end{equation}
	and the double stochasticity of $P$,
	we have
	\begin{equation*}
		\begin{split}
			\overline{s}^{(t)} & =  \frac{1}{n}({\bf 1}^{\mathrm{T}}\otimes I) \left(   {\bf P}{\bf s}^{(t-1)} +\nabla^{(t)}-\nabla^{(t-1)} \right) \\
			& = \frac{1}{n}({\bf 1}^{\mathrm{T}}\otimes I) \left(   ({P}\otimes I){\bf s}^{(t-1)} +\nabla^{(t)}-\nabla^{(t-1)} \right) \\
			& = \frac{1}{n}\left(({\bf 1}^{\mathrm{T}} P)\otimes I\right) {\bf s}^{(t-1)} +\overline{g}^{(t)}-\overline{g}^{(t-1)}   \\
			&= \overline{s}^{(t-1)}+ \overline{g}^{(t)}-\overline{g}^{(t-1)}  = \overline{g}^{(t)}.
		\end{split}
	\end{equation*}
	Upon using a similar argument for $\overline{z}^{(t)}$, we have
	\begin{equation*}
		\begin{split}
			\overline{z}^{(t+1)} & =  \frac{1}{n}({\bf 1}^{\mathrm{T}}\otimes I)    {(P\otimes I)}\left({\bf z}^{(t)} +{\bf s}^{(t)}  \right) \\
			&= \overline{z}^{(t)}+  \overline{s}^{(t)}  = \sum_{\tau =0}^{t}\overline{s}^{(\tau)}.
		\end{split}
	\end{equation*}
	Therefore, \eqref{conservation} holds for all $t$.
\end{proof}

\begin{lemma} \label{spectral_radius_M}
	%	 Consider the sequence \eqref{averaging_subgradient}. 
	If the parameter $a$ satisfies \eqref{1st_condition},
	then $\rho({\bf M})<1$, where ${\bf M}$ is defined in \eqref{def:M}.
	
	%	\overline{s}_{t+1}=g_{t+1}.
	%	$
\end{lemma}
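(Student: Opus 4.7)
The plan is to reduce the spectral radius bound to an algebraic inequality on $aL$ that is guaranteed by the first argument of the max in \eqref{1st_condition}. Since the second clause of \eqref{1st_condition} involves $\rho({\bf M})$ itself, I only need to exploit the first clause, namely
\begin{equation*}
    \frac{1}{a}>\frac{2L\beta}{(1-\beta)^2}, \quad \text{equivalently} \quad 2aL\beta<(1-\beta)^2,
\end{equation*}
which is implied by the max. The case $\beta=0$ is trivial since ${\bf M}$ then has $\rho({\bf M})=0$, so I treat $\beta\in(0,1)$ below.

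First I would use the eigenvalue expressions already recorded in \eqref{eig}: both eigenvalues of ${\bf M}$ are real, and since $\xi_1,\xi_2>0$ we have $\rho({\bf M})=\lambda_1=(\xi_1+\xi_2)/2$. Hence $\rho({\bf M})<1$ is equivalent to $\xi_1+\xi_2<2$, i.e.\ $\xi_2<2-\xi_1$. To justify squaring later, I would first verify the sign condition $\xi_1<2$: from $2aL\beta<(1-\beta)^2$ one gets $aL<\frac{(1-\beta)^2}{2\beta}\le \frac{2(1-\beta)}{\beta}$ (the latter inequality amounts to $1-\beta<4$, which trivially holds), so $\xi_1=\beta(2+aL)<2$.

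Next I would square the target inequality $\xi_2<2-\xi_1$. After squaring, using $\xi_1=\beta(2+aL)$ and $\xi_2^2=a^2\beta^2L^2+4aL\beta(\beta+1)$, the terms $\beta^2 a^2 L^2$ cancel on both sides and one is left with
\begin{equation*}
    4aL\beta(\beta+1)<4(1-\beta)^2-4aL\beta(1-\beta),
\end{equation*}
which simplifies to $2aL\beta<(1-\beta)^2$. This is precisely the hypothesis, so the chain of equivalences gives $\rho({\bf M})<1$. The only subtlety to watch is maintaining nonnegativity of $2-\xi_1$ before squaring, which is handled by the preliminary check above; aside from that, the argument is pure algebra, so I do not anticipate any serious obstacle.
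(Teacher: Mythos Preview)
Your proposal is correct and follows essentially the same route as the paper: identify $\rho({\bf M})=\lambda_1=(\xi_1+\xi_2)/2$ from \eqref{eig} and reduce $\rho({\bf M})<1$ to the algebraic condition $2aL\beta<(1-\beta)^2$, which is exactly the first clause of the max in \eqref{1st_condition}. The only cosmetic difference is that the paper phrases the last step via monotonicity of $\rho({\bf M})$ in $a$ together with solving $\lambda_1=1$ for the threshold $a=(1-\beta)^2/(2\beta L)$, whereas you carry out the equivalent squaring argument directly (and explicitly justify $2-\xi_1>0$ before squaring, which the paper leaves implicit).
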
 
\begin{proof}[Proof of Lemma \ref{spectral_radius_M}]
	{	Recall the two real eigenvalues of ${\bf M}$ in \eqref{eig}.
%		can be identified as $\lambda_1 = (\xi_1+\xi_2)/2$ and $\lambda_2 = (\xi_1-\xi_2)/2$, where
%		\begin{equation}\label{eig}
%			\xi_1 = \beta(aL+2),\quad  \xi_2 = \sqrt{\beta^2a^2L^2+4\beta aL(\beta+1)}.
%		\end{equation}
		Since $\xi_1>0$ and $\xi_2>0$, we have $\rho({\bf M}) = \lvert\lambda_1  \rvert >\lvert \lambda_2 \rvert$. Also, one can verify that $\rho({\bf M})$ monotonically increases with $a$.
		The solution to the equation $\lvert \lambda_1 \rvert =1$ can be identified as $ a  =(1-\beta)^2/(2\beta L)$. Therefore, we conclude that $\rho({\bf M})<1$ as long as \eqref{1st_condition} is satisfied.	 
		%		\begin{equation*}
		%			\xi_1 = 2\beta+aL,\quad  \xi_2 = \sqrt{a^2L^2+4(\beta+1)aL}.
		%		\end{equation*}
		%%	\begin{equation*}
		%%		\frac{2\beta +aL \pm \sqrt{a^2L^2+4(\beta+1)aL}}{2}.
		%%	\end{equation*}
		%%	Denote them by $\lambda_1>\lambda_2$.
		%%	Note that when `$a$' approaches $0$, $\rho({\bf M})$ converges to $\beta$, which is smaller than $1$ by Assumption \ref{graphconnected}.
		%It can be verified that $\lambda_1$ and $\lambda_2$ are monotonically increasing and decreasing over `$a$', respectively. By further letting $\lambda_1=1$ and $\lambda_2=-1$, we obtain that as long as \eqref{1st_condition} is satisfied,
		%	$\rho({\bf M})<1$ holds true.
		% Since $\rho({\bf M})$ is continuous over $a$, there must exist a constant $a$ such that $\rho({\bf M})<1$. Indeed, 
		%\begin{equation*}
		%{a}<\frac{(1-\beta)^2}{2L}
		%\end{equation*}
		%
		%\begin{equation*}
		% a^2L^2+4(\beta+1)aL+4(1+\beta)^2 >{a^2L^2+4(\beta+1)aL}
		%\end{equation*}
	}
\end{proof}

The following lemma establishes the relation between sequences $\{x_{i}^{(t)}\}_{t\geq 0}$ and $\{{y}^{(t)}\}_{t\geq 0}$.
%; the deviation between them represents the consensus error to be compensated in convergence analysis.

%Define ${\bf z}_{k+1}=\sum_{l=0}^{k}a_{l+1}{\bf h}_l$. 

\begin{lemma}\label{consensus_error_lemma}
	Suppose that Assumptions \ref{lipschitzassumption}, \ref{graphconnected} hold and the parameter $a$ in Algorithm \ref{FDDA} satisfies \eqref{1st_condition}. Then, for all $t\geq 0$, it holds that
	\begin{equation}\label{consensus_error}
		\begin{split}
			&	\sum_{\tau=1}^{t}\lVert {\bf x}^{(\tau)}-\mathbf{y}^{(\tau)}\rVert^2 \\
			&\leq \frac{8}{9\left(1-\rho({\bf M})\right)^2}\sum_{\tau=0}^{t-1}\lVert  {\bf y}^{(\tau+1)}-{\bf y}^{(\tau)} \rVert^2 + \frac{8\pi^2 }{9L^2\left(1-(\rho({\bf M}))^2\right)}
		\end{split}
	\end{equation}
	where 
	\begin{equation*}
		\pi^2= {\sum_{i=1}^n\left\|\nabla f_i(x^{(0)}) - \overline{g}^{(0)}\right\|^2} .
	\end{equation*}
	%	where
	%	\begin{equation*}
	%	M(a)=\begin{bmatrix}
	%	\beta & a \\
	%	L(\beta+1) & \beta+La
	%	\end{bmatrix}.
	%	\end{equation*}
\end{lemma}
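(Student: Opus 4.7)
\textbf{Proof plan for Lemma \ref{consensus_error_lemma}.} The 1-Lipschitz property of $\nabla d^*$ (Lemma \ref{gamma_continuity}) immediately gives $\|x_i^{(t)} - y^{(t)}\| \leq a\|z_i^{(t)} - \overline z^{(t)}\|$, so the left-hand side is controlled by $a^2 \sum_{\tau=1}^{t}(\varepsilon_z^{(\tau)})^2$ with
\[
\varepsilon_z^{(t)} := \|({\bf I}-W){\bf z}^{(t)}\|, \qquad \varepsilon_s^{(t)} := \|({\bf I}-W){\bf s}^{(t)}\|,
\]
where $W = (\mathbf{1}\mathbf{1}^{\mathrm T}/n)\otimes I$. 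The core of the proof is thus to bound $\sum (\varepsilon_z^{(\tau)})^2$ in terms of $\sum \|{\bf y}^{(\tau+1)}-{\bf y}^{(\tau)}\|^2$ and the initial data.

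First I would derive a coupled linear recursion for $(\varepsilon_z^{(t)}, \varepsilon_s^{(t)})$. Multiplying \eqref{consensus_compact_z} by $({\bf I}-W)$ and using $({\bf I}-W){\bf P} = {\bf P}({\bf I}-W)$ together with $\|{\bf P}({\bf I}-W)\| = \beta$, one obtains $\varepsilon_z^{(t)} \leq \beta \varepsilon_z^{(t-1)} + \beta \varepsilon_s^{(t-1)}$. For the $s$-recursion, \eqref{consensus_compact_s} gives $\varepsilon_s^{(t)} \leq \beta \varepsilon_s^{(t-1)} + L\|{\bf x}^{(t)}-{\bf x}^{(t-1)}\|$ by Assumption \ref{lipschitzassumption}(iii). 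Inserting the triangle inequality $\|{\bf x}^{(t)}-{\bf x}^{(t-1)}\| \leq \|{\bf x}^{(t)}-{\bf y}^{(t)}\| + \|{\bf y}^{(t)}-{\bf y}^{(t-1)}\| + \|{\bf y}^{(t-1)}-{\bf x}^{(t-1)}\|$ and again the Lipschitz bound $\|{\bf x}^{(\tau)}-{\bf y}^{(\tau)}\| \leq a\varepsilon_z^{(\tau)}$, followed by the $z$-recursion to replace $\varepsilon_z^{(t)}$ by its bound in terms of $\varepsilon_z^{(t-1)}, \varepsilon_s^{(t-1)}$, yields exactly
\[
\begin{bmatrix}\varepsilon_z^{(t)}\\ \varepsilon_s^{(t)}\end{bmatrix} \leq {\bf M}\begin{bmatrix}\varepsilon_z^{(t-1)}\\ \varepsilon_s^{(t-1)}\end{bmatrix} + \begin{bmatrix}0\\ L\|{\bf y}^{(t)}-{\bf y}^{(t-1)}\|\end{bmatrix},
\]
with ${\bf M}$ as in \eqref{def:M}.

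Next, Lemma \ref{spectral_radius_M} gives $\rho({\bf M})<1$ under \eqref{1st_condition}. Since ${\bf M}$ has nonnegative entries, I would take a positive left eigenvector ${\bf u}>0$ of ${\bf M}$ associated with $\rho({\bf M})$ and form the scalar surrogate $\tilde\varepsilon^{(t)} := {\bf u}^{\mathrm T}(\varepsilon_z^{(t)}, \varepsilon_s^{(t)})^{\mathrm T}$. This satisfies the scalar recursion
\[
\tilde\varepsilon^{(t)} \leq \rho({\bf M})\tilde\varepsilon^{(t-1)} + u_2 L\|{\bf y}^{(t)}-{\bf y}^{(t-1)}\|,
\]
to which Lemma \ref{pertubed_consensus} with $\delta = \rho({\bf M})$ applies, yielding a sum-of-squares bound on $\tilde\varepsilon^{(\tau)}$. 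Since $u_1 \varepsilon_z^{(t)} \leq \tilde\varepsilon^{(t)}$, dividing by $u_1^2$ converts this into the desired bound on $\sum (\varepsilon_z^{(\tau)})^2$. Finally, the initialization $z_i^{(0)}=0$ gives $\varepsilon_z^{(0)}=0$, while $s_i^{(0)} = \nabla f_i(x^{(0)})$ gives $\varepsilon_s^{(0)} = \pi$, which produces the $\pi^2$ term in \eqref{consensus_error}. Multiplying through by $a^2$ recovers the norm $\|{\bf x}^{(\tau)}-{\bf y}^{(\tau)}\|$ on the left.

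The delicate step is extracting the explicit constant $8/9$. This requires a careful choice of the eigenvector ${\bf u}$ (or equivalently a weighted norm) together with the parameter constraint \eqref{1st_condition}, so that the ratio $(a L u_2/u_1)^2$ is bounded by $4/9$; the calculation reduces to analyzing the two real eigenvalues of ${\bf M}$ given in \eqref{eig} and using that \eqref{1st_condition} ensures $\rho({\bf M})$ is sufficiently below $1$ to make this quadratic inequality hold. I expect this bookkeeping to be the main obstacle, whereas the structural steps (Lipschitz reduction, coupled recursion, scalarization) are relatively routine.
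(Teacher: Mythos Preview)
Your proposal is structurally correct and agrees with the paper through the derivation of the coupled recursion
\[
\begin{bmatrix}\varepsilon_z^{(t)}\\ \varepsilon_s^{(t)}\end{bmatrix} \leq {\bf M}\begin{bmatrix}\varepsilon_z^{(t-1)}\\ \varepsilon_s^{(t-1)}\end{bmatrix} + \begin{bmatrix}0\\ L\|{\bf y}^{(t)}-{\bf y}^{(t-1)}\|\end{bmatrix},
\]
and also in the final step of applying Lemma~\ref{pertubed_consensus} with $\delta=\rho({\bf M})$ and the initial data $\varepsilon_z^{(0)}=0$, $\varepsilon_s^{(0)}=\pi$.

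The genuine difference is in how the $2\times 2$ system is reduced to a scalar. You propose to scalarize via a positive left Perron eigenvector ${\bf u}$, which leads to the requirement $(aLu_2/u_1)^2\le 4/9$; since $aLu_2/u_1=(\lambda_1-\beta)/(\beta+1)$, this amounts to proving $\lambda_1-\beta\le \tfrac{2}{3}(\beta+1)$ under \eqref{1st_condition}. The paper instead \emph{unrolls} the recursion, writes the first component explicitly as $\varepsilon_z^{(t)}\le L\sum_{\tau}({\bf M}^{t-\tau-1})_{12}\|{\bf y}^{(\tau+1)}-{\bf y}^{(\tau)}\|+({\bf M}^t)_{12}\pi$, and uses the closed form $({\bf M}^n)_{12}=\beta(\lambda_1^n-\lambda_2^n)/(\lambda_1-\lambda_2)\le 2\beta\rho({\bf M})^n/\xi_2$. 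The factor $2/3$ then falls out of the single elementary inequality $\xi_2>3\beta aL$, which is an immediate consequence of the first branch $aL<(1-\beta)^2/(2\beta)$ of \eqref{1st_condition}. This is a cleaner route to the exact constant than the eigenvector-ratio inequality you would need to verify, so if you want to match $8/9$ precisely, it is simpler to follow the paper's entrywise bound on ${\bf M}^n$ rather than the Perron scalarization.
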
   

\begin{proof}[Proof of Lemma \ref{consensus_error_lemma}]
	From Lemma \ref{conservation_property}, we have
	\begin{equation*}
		\overline{z}^{(\tau)} = \overline{z}^{(\tau-1)}+\overline{g}^{(\tau-1)}.
	\end{equation*}
	Define
	\begin{equation}\label{error_notation}
		\tilde{ {\bf s}}^{(t)}= {\bf s}^{(t)}-{\bf 1}\otimes \overline{s}^{(t)}, \, 	\tilde{ {\bf z}}^{(t)}= {\bf z}^{(t)}-{\bf 1}\otimes \overline{z}^{(t)}.
	\end{equation}
	These in conjunction with \eqref{consensus_compact_z} lead to
	\begin{equation}\label{deviation_z}
		\begin{split}
			\tilde{\bf z}^{(\tau)} 
			=  \mathbf{P}{\bf z}^{(\tau-1)}-\mathbf{1}\otimes\overline{z}^{(\tau-1)} +\mathbf{P}{\bf s}^{(\tau-1)} - \mathbf{1}\otimes\overline{s}^{(\tau-1)}.
		\end{split}
	\end{equation}
	Because of $\mathbf{1}\otimes\bar{z}^{(\tau-1)} = (\mathbf{1}\otimes I)\bar{z}^{(\tau-1)}$ and \eqref{kron}, 
we have
	$$ \mathbf{1}\otimes\bar{z}^{(\tau-1)} = \frac{1}{n}(\mathbf{1}\otimes I)(\mathbf{1}^{\mathrm{T}}\otimes I)\mathbf{z}^{(\tau-1)} = \left(\frac{\mathbf{1}\mathbf{1}^{\mathrm{T}}}{n}\otimes I\right)\mathbf{z}^{(\tau-1)}. $$
	In addition, we have
	\begin{align}\label{consensus_transformation}
		&\mathbf{P}{\bf z}^{(\tau-1)}-\mathbf{1}\otimes\overline{z}^{(\tau-1)} = (P\otimes I)\mathbf{z}^{(\tau-1)} - \left(\frac{\mathbf{1}\mathbf{1}^{\mathrm{T}}}{n}\otimes I\right)\mathbf{z}^{(\tau-1)} \nonumber \\
		& = \left(\left(P-\frac{\mathbf{1}\mathbf{1}^{T}}{n}\right)\otimes I\right){\bf z}^{(\tau-1)} \nonumber\\
		& = \left(\left(P-\frac{\mathbf{1}\mathbf{1}^{T}}{n}\right)\otimes I\right)\left(\tilde{\bf z}^{(\tau-1)} + (\mathbf{1}\otimes I)\bar{z}^{(\tau-1)}\right) \nonumber\\
		& = \left(\left(P-\frac{\mathbf{1}\mathbf{1}^{T}}{n}\right)\otimes I\right)\tilde{\bf z}^{(\tau-1)} + \left(\left(P\mathbf{1}-\mathbf{1}\right)\otimes I\right)\bar{z}^{(\tau-1)} \nonumber\\
		& = \left(\left(P-\frac{\mathbf{1}\mathbf{1}^{T}}{n}\right)\otimes I\right)\tilde{\bf z}^{(\tau-1)},
	\end{align}
	where the last equality is due to the double stochasticity of $P$. 
	Using the same arguments as above for $\mathbf{P}{\bf s}^{(\tau-1)}-\mathbf{1}\otimes\overline{s}^{(\tau-1)}$
	and Assumption \ref{graphconnected}, we have
	\begin{equation}\label{eq:take-exp}
		\begin{split}
			\lVert	\tilde{\bf z}^{(\tau)} \rVert 
			&\leq  \left\lVert \mathbf{P}{\bf z}^{(\tau-1)} - \mathbf{1}\otimes\overline{z}^{(\tau-1)}\right\rVert+\left\lVert \mathbf{P}{\bf s}^{(\tau-1)}-\mathbf{1}\otimes\overline{s}^{(\tau-1)}\ \right\rVert \\
			& \leq \beta 	\lVert	\tilde{\bf z}^{(\tau-1)} \rVert +\beta 	\lVert	\tilde{\bf s}^{(\tau-1)} \rVert .
		\end{split}
	\end{equation}
	% 	\begin{equation*}
	% 		(A\otimes B)^{\mathrm{T}} = A^{\mathrm{T}}\otimes B^{\mathrm{T}}, \forall A\in\mathbb{R}^{n\times n},B\in\mathbb{R}^{m\times m},
	% 	\end{equation*}
	% and
	% \begin{equation*}
	% 	(A\otimes B)(C\otimes D) = AC\otimes BD, \forall A,C\in\mathbb{R}^{n\times n},B,D\in\mathbb{R}^{m\times m}
	% \end{equation*}
	%	and therefore
	%	\begin{equation}\label{dual_error}
	%		\begin{split}
	%			\lVert	\tilde{\bf z}_k \rVert_\mathbb{E} 
	%			\leq &  \beta (\mathbb{E}[\lVert	\tilde{\bf z}_{k-1} \rVert_\mathbb{E} ] 
	%			+a_{k}	\lVert	\tilde{\bf s}_{k-1} \rVert_\mathbb{E} ).
	%		\end{split}
	%	\end{equation}
	Similarly, from Lemma \ref{conservation_property}, we obtain
	\begin{equation}\label{eq:tilde-s}
		\begin{split}
			\lVert	\tilde{\bf s}^{(\tau)} \rVert
			& =\left\| \mathbf{P}{{\bf s}}^{(\tau-1)}-\left(\mathbf{1}\otimes I\right)\overline{s}^{(\tau-1)}+\nabla^{(\tau)}- \nabla^{(\tau-1)}\right\rVert \\
			& \leq  \beta 	\lVert	\tilde{\bf s}^{(\tau-1)} \rVert+\left\| \nabla^{(\tau)}- \nabla^{(\tau-1)}\right\| \\
			& \leq  \beta 	\lVert	\tilde{\bf s}^{(\tau-1)} \rVert+L\lVert  {\bf x}^{(\tau)}-{{\bf x}}^{(\tau-1)}\rVert
		\end{split}
	\end{equation}
	where the last inequality is due to Assumption \ref{lipschitzassumption}. 
	Using 
	\begin{equation*}
		\begin{split}
			\lVert{\bf x}^{(\tau)}-{{\bf x}}^{(\tau-1)} \rVert
			\leq& \lVert {\bf x}^{(\tau)}-{{\bf y}}^{(\tau)}\rVert+\lVert {\bf x}^{(\tau-1)}-{{\bf y}}^{(\tau-1)}\rVert \\
			&+ \lVert  {\bf y}^{(\tau)}-{{\bf y}}^{(\tau-1)}\rVert,
		\end{split}
	\end{equation*}
	and Lemma \ref{gamma_continuity},
	we obtain
	\begin{equation*}
		\begin{split}
			&\lVert{\bf x}^{(\tau)}-{{\bf x}}^{(\tau-1)} \rVert
			\leq  a\lVert\tilde{\mathbf z}^{(\tau)}\rVert+ a\lVert\tilde{\mathbf z}^{(\tau-1)}\rVert+ \lVert  {\bf y}^{(\tau)}-{{\bf y}}^{(\tau-1)}\rVert \\
			& \leq a(\beta+1) \lVert\tilde{\mathbf z}^{(\tau-1)}\rVert+a\beta \lVert\tilde{\mathbf s}^{(\tau-1)}\rVert+ \lVert  {\bf y}^{(\tau)}-{{\bf y}}^{(\tau-1)}\rVert.
		\end{split}
	\end{equation*}
	Upon substituting the above inequality into \eqref{eq:tilde-s}, we obtain
	\begin{equation}\label{gradient_error}
		\begin{split}
			\lVert	\tilde{\bf s}^{(\tau)} \rVert
			\leq &
			\beta(aL+1)	\lVert	\tilde{\bf s}^{(\tau-1)} \rVert 
			+aL(\beta+1) 	\lVert	\tilde{\bf z}^{(\tau-1)} \rVert\\
			&	+L\lVert  {\bf y}^{(\tau)}-{{\bf y}}^{(\tau-1)}\rVert.
		\end{split}
	\end{equation}
	By combining \eqref{eq:take-exp} and \eqref{gradient_error}, we establish the following inequality:
	\begin{equation*}
		\begin{split}
			\begin{bmatrix}
				\lVert	\tilde{\bf z}^{(\tau)} \rVert\\
				\lVert	\tilde{\bf s}^{(\tau)} \rVert
			\end{bmatrix} 
			\leq  \mathbf{M} 	\begin{bmatrix}
				\lVert	\tilde{\bf z}^{(\tau-1)} \rVert \\
				\lVert	\tilde{\bf s}^{(\tau-1)} \rVert
			\end{bmatrix}   + L \begin{bmatrix}
				0\\
				\lVert  {\bf y}^{(\tau)}-{{\bf y}}^{(\tau-1)}\rVert
			\end{bmatrix} 
		\end{split}
	\end{equation*}
	where $\mathbf{M}$ is defined in \eqref{def:M}.
	By iterating the above inequality and using
	\begin{equation*}
		\lVert	\tilde{\bf z}^{(0)} 	\lVert= 0 ,\quad 	\lVert	\tilde{\bf s}^{(0)}	\lVert = \sqrt{\sum_{i=1}^n\left\|\nabla f_i(x^{(0)}) - \overline{g}^{(0)}\right\|^2}:=\pi ,
	\end{equation*}
	we obtain
	\begin{equation*}
		\begin{split}
			\begin{bmatrix}
				\lVert	\tilde{\bf z}^{(t)} \rVert \\
				\lVert	\tilde{\bf s}^{(t)} \rVert
			\end{bmatrix}
			\leq&L \sum_{\tau=0}^{t-1}\mathbf{M}^{t-\tau-1} \begin{bmatrix}
				0\\
				\lVert  {\bf y}^{(\tau+1)}-{{\bf y}}^{(\tau)}\rVert
			\end{bmatrix} +   \mathbf{M} ^{t} \begin{bmatrix}
				0 \\
				\pi
			\end{bmatrix} .
		\end{split}
	\end{equation*}
	Recall the eigenvalues of matrix $\mathbf{M}$ in \eqref{eig}.
	Then an analytical form can be presented for the $n$th power of $\mathbf{M}$ (see, e.g., \cite{williams1992n})
	\begin{equation*}
		\begin{split}
			\mathbf{M}^n = \lambda_1^n\left( \frac{\mathbf{M}- \lambda_2 I }{\lambda_1-\lambda_2} \right) -\lambda_2^n\left( \frac{\mathbf{M}- \lambda_1 I }{\lambda_1-\lambda_2} \right) .
		\end{split}
	\end{equation*}
Therefore, the $(1,2)$-th entry of $\mathbf{M}^n$ can be written as
	\begin{equation*}
		(\mathbf{M}^n)_{12}=\frac{ \mathbf{M}_{12}(\lambda_1^n  -\lambda_2^n)}{\lambda_1-\lambda_2} = \frac{ \beta(\lambda_1^n  -\lambda_2^n)}{\lambda_1-\lambda_2} \leq \frac{2\beta(\rho(\mathbf{M}))^n}{\xi_2}.
	\end{equation*} 
	Due to the assumption that
	$
	{1}/{a}>{2\beta L}/{(1-\beta)^2}
	$
	and $\beta\in (0,1)$,
	we have 
	\begin{equation*}
		\xi_2=\beta a L\sqrt{1+\frac{4(\beta+1)}{\beta a L}} >\beta a L\sqrt{1+\frac{8(\beta+1)}{(1-\beta)^2}} >3\beta a L.
	\end{equation*}
	Therefore,
	%Then
	%	\begin{equation*}
	%	\begin{split}
	%		&	
	%			\lVert	\tilde{\bf z}_{k} \rVert_\mathbb{E} \\
	%		\leq& \sqrt{a_k}\frac{L+\mu}{1-a\mu} \sum_{j=0}^{k-1}(\mathbf{M}^{k-j-1})_{12}{\sqrt{1-a\mu}}^{{k-j-1}}
	%			\sqrt{a_{j+1}}\lVert \Delta{\bf y}_{j}\rVert_\mathbb{E}\\
	%		&+ (\mathbf{M}^k)_{12} 
	%			a_1 	\lVert	\tilde{\bf s}_{0} \rVert_2\\
	%			\leq   & \frac{2\beta\sqrt{a_k}\frac{L+\mu}{1-a\mu} }{\xi_2} \sum_{j=0}^{k-1}\nu^{k-j-1}
	%			\sqrt{a_{j+1}}\lVert \Delta{\bf y}_{j}\rVert_\mathbb{E} \\
	%			&+ \frac{2\beta }{\xi_2} \rho^n(\mathbf{M}) a_1 \lVert	\tilde{\bf s}_{0} \rVert_2
	%	\end{split}
	%\end{equation*}
	\begin{equation}\label{consensus_error_mid}
		\begin{split}
			&	\lVert	\tilde{\bf z}^{(t)} \rVert\\
			& \leq
			\frac{2\beta{L}}{\xi_2}\sum_{\tau=0}^{t-1}\left( \rho(\mathbf{M}) \right)^{t-\tau-1}	\lVert {\bf y}^{(\tau+1)}-{\bf y}^{(\tau)}\rVert+ 	\frac{2\beta}{\xi_2} \left( \rho(\mathbf{M}) \right)^{t} \pi\\
			&	\leq \frac{2}{3a}\sum_{\tau=0}^{t-1}\left( \rho(\mathbf{M}) \right)^{t-\tau-1}	\lVert {\bf y}^{(\tau+1)}-{\bf y}^{(\tau)}\rVert+ \frac{2}{3aL}\left( \rho(\mathbf{M}) \right)^{t}\pi.
		\end{split}
	\end{equation}
	Using Lemma \ref{gamma_continuity}, we further obtain
	\begin{equation}\label{x-y}
		\begin{split}
			&\lVert{{\bf x}}^{(t)}-{{\bf y}}^{(t)}\rVert \leq   	  a 	\lVert	\tilde{\bf z}^{(t)} \rVert \\
			&	\leq \frac{2}{3}\sum_{\tau=0}^{t-1}\left( \rho(\mathbf{M}) \right)^{t-\tau-1}	\lVert {\bf y}^{(\tau+1)}-{\bf y}^{(\tau)}\rVert+ \frac{2}{3L}\left( \rho(\mathbf{M}) \right)^{t}\pi.
		\end{split}
	\end{equation} 
Further using the above relation and Lemma \ref{pertubed_consensus}, 
	the inequality \eqref{consensus_error} follows as desired.
\end{proof}

Then, a lemma is stated for the prox-mapping. 
%It will be used to establish descent properties for operations in \eqref{primal_smooth} and \eqref{projection_ADDA} .
%Specifically, given an arbitrary sequence of variables $\{ v_t\}_{t\geq0}$ in the dual space, we construct $\{w_t\}_{t\geq0}$ according to the following rule:
%\begin{equation*}
%{w}_{t}= \nabla d^*\Big(-\sum_{k=0}^{t}v_k\Big)
%%	\\
%%	%	x_{t+1}&=\frac{A_{t}}{A_{t+1}}x_{t}+\frac{a_{t+1}}{A_{t+1}}\hat{x}_{t+1} \\
%%	z_{t+1}& = z_{t}+a_{t+1}\nabla f(x_{t+1})
%\end{equation*}
%It plays a similar role with the well-known dual averaging inequality (Theorem 2 in \cite{nesterov2009primal}) for nonsmooth optimization in convergence analysis. 
%However, it further makes use of the smoothness of the objective in order to provide a much tighter bound for a faster convergence rate.

\begin{lemma}\label{inexact_dual_averaging_inter}
	Given a sequence of variables $\{ \zeta^{(t)}\}_{t\geq0}$ and a positive sequence $\{ a_t\}_{t\geq0}$, 
	for $\{{\nu}^{(t)}\}_{t\geq 0}$ generated by
	\begin{equation*}
		{\nu}^{(t)}= \nabla d^*\left(-\sum_{\tau=1}^{t}a_{\tau} \zeta^{(\tau)}\right),
		%	\\
		%	%	x_{t+1}&=\frac{A_{t}}{A_{t+1}}x_{t}+\frac{a_{t+1}}{A_{t+1}}\hat{x}_{t+1} \\
		%	z_{t+1}& = z_{t}+a_{t+1}\nabla f(x_{t+1})
	\end{equation*}
	where $\nu^{(0)} = x^{(0)}$ in \eqref{eq:initial-cond}, it holds that
	\begin{equation}\label{inexact}
		\begin{split}
			\sum_{\tau=1}^{t}a_\tau\left\langle \zeta^{(\tau)},{\nu}^{(\tau)}-x^*\right \rangle \leq d(x^*)-\sum_{\tau=1}^{t}\frac{1}{2}\lVert  {\nu}^{(\tau)}-{\nu}^{(\tau-1)} \rVert^2.
		\end{split}
	\end{equation}
\end{lemma}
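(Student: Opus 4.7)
The plan is to set up an auxiliary regularized potential function and exploit $1$-strong convexity of $d$ in a standard dual-averaging-style telescoping argument. Concretely, I would introduce
\[V_t(x) := d(x) + \sum_{\tau=1}^{t} a_\tau \langle \zeta^{(\tau)}, x\rangle, \qquad V_0(x) := d(x).\]
The first observation is that, unpacking the definition of $\nabla d^{*}$ as $\argmax_{x\in\mathcal{X}}\{\langle\cdot,x\rangle - d(x)\}$, the iterate $\nu^{(t)}$ is exactly the minimizer of $V_t$ over $\mathcal{X}$, and $V_t$ inherits $1$-strong convexity from $d$ regardless of $t$ (the added terms are linear in $x$). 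In particular, $\nu^{(0)} = \argmin_{x\in\mathcal X} d(x) = x^{(0)}$, consistent with the hypothesis, and $V_0(\nu^{(0)}) = d(x^{(0)}) = 0$ by \eqref{eq:initial-cond}.

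Next I would apply the strong-convexity/minimizer inequality in two different directions. On the one hand, since $\nu^{(t)}$ minimizes $V_t$ on $\mathcal{X}$ and $x^*\in\mathcal{X}$,
\[V_t(x^*) \;\geq\; V_t(\nu^{(t)}) + \tfrac{1}{2}\|x^*-\nu^{(t)}\|^2 \;\geq\; V_t(\nu^{(t)}).\]
On the other hand, using $V_t(x) = V_{t-1}(x) + a_t\langle \zeta^{(t)}, x\rangle$ together with the fact that $\nu^{(t-1)}$ minimizes the $1$-strongly convex $V_{t-1}$,
\[V_t(\nu^{(t)}) \;=\; V_{t-1}(\nu^{(t)}) + a_t\langle \zeta^{(t)}, \nu^{(t)}\rangle \;\geq\; V_{t-1}(\nu^{(t-1)}) + \tfrac{1}{2}\|\nu^{(t)}-\nu^{(t-1)}\|^2 + a_t\langle \zeta^{(t)}, \nu^{(t)}\rangle.\]

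Then I would telescope this second inequality from $\tau=1$ to $\tau=t$, using $V_0(\nu^{(0)}) = 0$, to arrive at
\[V_t(\nu^{(t)}) \;\geq\; \sum_{\tau=1}^{t}\tfrac{1}{2}\|\nu^{(\tau)}-\nu^{(\tau-1)}\|^2 + \sum_{\tau=1}^{t} a_\tau\langle \zeta^{(\tau)}, \nu^{(\tau)}\rangle.\]
Chaining this with $V_t(x^*)\geq V_t(\nu^{(t)})$ and expanding $V_t(x^*) = d(x^*) + \sum_{\tau=1}^{t} a_\tau\langle \zeta^{(\tau)}, x^*\rangle$ yields \eqref{inexact} after rearrangement.

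The argument is essentially routine once the potential $V_t$ is in place, so I do not anticipate a real obstacle. The only care point is the bookkeeping at $\tau=0$: one must verify that the initial condition $\nu^{(0)}=x^{(0)}$ imposed by the lemma is consistent with the convention that $\nu^{(0)}$ minimizes $V_0 = d$ (guaranteed by \eqref{eq:initial-cond}) so that the telescoping starts cleanly with $V_0(\nu^{(0)})=0$; otherwise a nonzero boundary term would appear. Everything else is a direct consequence of $1$-strong convexity applied twice and summation by telescoping.
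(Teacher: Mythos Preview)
Your proposal is correct and essentially identical to the paper's proof: the paper defines the same potential (calling it $m_t$ instead of $V_t$), uses the same $1$-strong-convexity inequality at the minimizer $\nu^{(\tau-1)}$ to obtain the one-step bound, telescopes using $m_0(\nu^{(0)})=0$, and then combines with $m_t(x^*)\geq m_t(\nu^{(t)})$ (phrased there as $\sum a_\tau\langle\zeta^{(\tau)},-x^*\rangle\leq -m_t(\nu^{(t)})+d(x^*)$). The only difference is cosmetic ordering of the two pieces.
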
   
\begin{proof}[Proof of Lemma \ref{inexact_dual_averaging_inter}]
	%$z_k=\sum_{l=0}^{k}ag_l$. 
	Define 
	\begin{equation*}
		\begin{split}
			m_{t}({x})= \sum_{\tau=1}^{t}a_{\tau}\left\langle \zeta^{(\tau)},x\right \rangle +d({x})
			%\\
			%& = \langle \sum_{k=1}^{t}a\nabla g_k,x \rangle +d({x})-\langle\nabla d(x_0),x \rangle) .
		\end{split}       
	\end{equation*}
	where $m_0(x)= d(x)$.
	Since 
	\begin{equation*}
		\nu^{(\tau-1)} = \argmin_{x\in\mathcal{X}} m_{\tau-1}(x)
	\end{equation*}
	and
	$m_{\tau-1}(x)$ is strongly convex with modulus $1$, we have
	\begin{equation*}
		\begin{split}
			m_{\tau-1}(x)-m_{\tau-1}({\nu}^{(\tau-1)})\geq  \frac{1}{2}\lVert x- \nu^{(\tau-1)} \rVert^2, \forall x\in\mathcal{X}.
		\end{split}
	\end{equation*}
	Upon taking $x= \nu^{(\tau)}$ in the above inequality and using 
	\begin{equation*}
		m_\tau({x}) = m_{\tau-1}({x})+a_{\tau}\left\langle \zeta^{(\tau)},x\right \rangle,
	\end{equation*}
	we obtain
	\begin{equation*}
		\begin{split}
			0\leq& m_{\tau-1}({\nu}^{(\tau)})-m_{\tau-1}({\nu}^{(\tau-1)})-\frac{1}{2}\lVert  {\nu}^{(\tau)}-{\nu}^{(\tau-1)} \rVert^2\\
			=& m_\tau({\nu}^{(\tau)})-a_{\tau} \left  \langle \zeta^{(\tau)},{\nu}^{(\tau)} \right\rangle -m_{\tau-1}({\nu}^{(\tau-1)})\\
			&-\frac{1}{2}\lVert \nu^{(\tau)}-{\nu}^{(\tau-1)} \rVert^2,
		\end{split}
	\end{equation*}
	which is equivalent to
	\begin{equation*}
		\begin{split}
			a_{\tau}\left\langle  \zeta^{(\tau)},{\nu}^{(\tau)} \right\rangle\leq&  m_\tau({\nu}^{(\tau)}) -m_{\tau-1}({\nu}^{(\tau-1)})\\
			&-\frac{1}{2}\lVert {\nu}^{(\tau)}-{\nu}^{(\tau-1)} \rVert^2 .
		\end{split}
	\end{equation*}
	Iterating the above equation from $\tau=1$ to $\tau =t$ yields
	\begin{equation}\label{inexact_gradient_result1}
		\begin{split}
			&\sum_{\tau=1}^{t}\left\langle a_{\tau} \zeta^{(\tau)},{\nu}^{(\tau)} \right\rangle\\
			\leq&  m_{t}({\nu}^{(t)}) -m_{0}({\nu}^{(0)})-\sum_{\tau=1}^{t}\frac{1}{2}\lVert  {\nu}^{(\tau)}-{\nu}^{(\tau-1)} \rVert^2 \\
			=& m_{t}({\nu}^{(t)}) -\sum_{\tau=1}^{t}\frac{1}{2}\lVert  {\nu}^{(\tau)}-\nu^{(\tau-1)} \rVert^2
		\end{split}
	\end{equation}
	We turn to consider
	\begin{equation*}
		\begin{split}
			&\sum_{\tau=1}^{t}a_{\tau} \left\langle \zeta^{(\tau)},-x^*\right \rangle\\
			&\leq  \max_{{x}\in\mathcal{X}} \left\{  \sum_{\tau=1}^{t} a_{\tau}\left\langle \zeta^{(\tau)},-x \right\rangle  -d(x)\right\}+d(x^*) \\
			& = -\min_{{x}\in\mathcal{X}} \left\{  \sum_{\tau=1}^{t} a_{\tau} \left\langle  \zeta^{(\tau)},x \right\rangle+d(x)\right\}+d(x^*)\\
			& = -m_{t}({\nu}^{(t)})+d(x^*),
		\end{split}
	\end{equation*}
	which together with \eqref{inexact_gradient_result1} leads to the inequality in \eqref{inexact}, thereby concluding the proof.
\end{proof}

\subsection{Proof of Theorem \ref{inexact_thm}}

%\subsection{Convergence Rate Analysis}
\begin{proof}[Proof of Theorem \ref{inexact_thm}]   % and the pf environment for proofs
	%	The square of the length of the hypotenuse of a right triangle equals the sum of the squares 
	%	of the lengths of the other two sides.
	%\emph{ Part \uppercase\expandafter{\romannumeral1}:}
	For all $\tau \geq 1$, we have
	\begin{align}
		& \frac{1}{n}\sum_{i=1}^{n}a\left( f_i({y}^{(\tau)}) -f_i(x^*)\right) \nonumber \\
		& \leq \frac{1}{n}\sum_{i=1}^{n} a\Big( f_i(x_{i}^{(\tau-1)}) - f_i(x^*) + \frac{L}{2}\lVert {y}^{(\tau)}-x_{i}^{(\tau-1)} \rVert^2 \nonumber \\
		& \qquad \qquad \quad +\left\langle \nabla f_i(x_{i}^{(\tau-1)}),{y}^{(\tau)}-x_{i}^{(\tau-1)} \right\rangle\Big) \nonumber \\
		%\overset{(b)}{\leq}& \sum_{i=1}^{n} a\Big( \frac{L}{2}\lVert {y}_{\tau}-x_{i,\tau-1} \rVert_2^2\\
		%&+f_i(x_{i,\tau-1})+\langle \nabla f_i(x_{i,\tau-1}),{y}_{\tau}-x_{i,\tau-1} \rangle- f_i(x^*) \Big)\\
		& \leq \frac{1}{n}\sum_{i=1}^{n}  a\Big(\frac{L}{2}\lVert {y}^{(\tau)}-x_{i}^{(\tau-1)} \rVert^2 + \left\langle \nabla f_i(x_{i}^{(\tau-1)}),{y}^{(\tau)}-x^*\right \rangle \Big) \nonumber \\
		& = \frac{1}{n}\sum_{i=1}^{n}a\Big(\frac{L}{2}\lVert {y}^{(\tau)}-x_{i}^{(\tau-1)} \rVert^2 \Big) + a\left\langle \overline{g}^{(\tau-1)}, {y}^{(\tau)}-x^*\right\rangle,
		\label{FDDA_smoothness_use}
	\end{align}
	where the two inequalities are due to Assumption \ref{lipschitzassumption}.
	By iterating \eqref{FDDA_smoothness_use} from $\tau = 1$ to $\tau = t$ and using Lemma \ref{inexact_dual_averaging_inter} ($a_\tau =a, \zeta^{(\tau)}=\overline{g}^{(\tau-1)}$, and $\nu^{(\tau)}=y^{(\tau)}$), we obtain
	\begin{align}
		& \sum_{\tau=1}^{t} a \left( f(y^{(\tau)}) - f(x^*) \right) \nonumber \\
		& \leq \frac{1}{n}\sum_{\tau=1}^{t}\sum_{i=1}^{n}a\Big(\frac{L}{2}\lVert {y}^{(\tau)}-x_{i}^{(\tau-1)} \rVert^2 \Big) \nonumber \\ 
		& \quad -\frac{1}{2}\sum_{\tau=1}^{t}\lVert y^{(\tau)} - y^{(\tau-1)}\rVert^2 + d(x^*).  \nonumber \\
		& = \frac{aL}{2n}\sum_{\tau=1}^{t}\lVert {\bf y}^{(\tau)}-{\bf x}^{(\tau-1)} \rVert^2  -\frac{1}{2}\sum_{\tau=1}^{t}\lVert y^{(\tau)} - y^{(\tau-1)}\rVert^2 + d(x^*).  \nonumber
	\end{align}
	Then we use the inequality
	$$ \|\mathbf{y}^{(\tau)} - \mathbf{x}^{(\tau-1)}\|^2 \leq 2\|\mathbf{y}^{(\tau)} - \mathbf{y}^{(\tau-1)}\|^2 + 2\|\mathbf{y}^{(\tau-1)} - \mathbf{x}^{(\tau-1)}\|^2 $$
	and the convexity of $f$ to get
	\begin{align*}
		& at\left(f(\tilde{y}^{(t)})  -f(x^*)\right) \\
		& \leq \frac{1}{n}\sum_{\tau=1}^{t} \left(aL- \frac{1 }{2}\right)\|\mathbf{y}^{(\tau)} - \mathbf{y}^{(\tau-1)}\|^2\\  
		& \quad + \frac{aL}{n}\sum_{\tau=1}^{t} \|\mathbf{x}^{(\tau-1)} - \mathbf{y}^{(\tau-1)}\|^2 + d(x^*).
	\end{align*}
	%\begin{equation*}
	%	\gamma = \frac{1}{2}-aL-\frac{8aL}{9(1-\rho({\bf M})^2)}
	%\end{equation*}
	Upon using Lemma \ref{consensus_error_lemma}, one has
	\begin{align}
		& a t\left( f(\tilde{y}^{(t)})- f(x^*)\right) + \frac{\gamma}{n}\sum_{\tau=1}^{t}\|\mathbf{y}^{(\tau)} - \mathbf{y}^{(\tau-1)}\|^2 \nonumber \\
		& \leq d(x^*) +\frac{8a\pi^2 }{9nL\left(1-(\rho({\bf M}))^2\right)}= C, \label{eq:last}
	\end{align}
	where $\gamma>0$ is defined in \eqref{def:gamma}. Therefore we have \eqref{FDDA_rate} as desired.

	From \eqref{eq:last} and $f(\tilde{y}^{(t)})  - f(x^*)\geq 0$, we have 
	$$ \sum_{\tau=1}^{t}\|\mathbf{y}^{(\tau)} - \mathbf{y}^{(\tau-1)}\|^2\leq \frac{nC}{\gamma}. $$
By using the above inequality, the convexity of $\lVert \cdot \rVert^2$, Jensen's Inequality, and Lemma \ref{consensus_error_lemma}, we arrive at
	\begin{align*}
		&	t\lVert{\tilde{\bf x}}^{(t)}-\tilde{\bf y}^{(t)}\rVert^2 \leq \sum_{\tau=1}^{t}\lVert{{\bf x}}^{(\tau)}-{{\bf y}}^{(\tau)}\rVert^2 \\
		& \leq \frac{8}{9\left(1-\rho({\bf M})\right)^2}\sum_{\tau=0}^{t-1}\lVert  {\bf y}^{(\tau+1)}-{\bf y}^{(\tau)} \rVert^2 + \frac{8\pi^2 }{9L^2\left(1-(\rho({\bf M}))^2\right)}\\
		& \leq \frac{8nC}{9\gamma\left(1-\rho({\bf M})\right)^2} + \frac{8\pi^2 }{9L^2\left(1-(\rho({\bf M}))^2\right)} = D,
	\end{align*}
	which implies \eqref{eq:x-y-dist} as desired.
\end{proof}

\section{Proof of Corollary \ref{unconstrained corollary}}
\begin{proof}[Proof of Corollary \ref{unconstrained corollary}]
	We consider
	\begin{equation}\label{unconstrained}
		\begin{split}
			& f(x_{i}^{(\tau)})-f(y^{(\tau)})\\
			& \leq \frac{1}{n}\sum_{j=1}^{n} \left( f_j(x_{i}^{(\tau)})-\langle \nabla f_j(x_{j}^{(\tau)}),y^{(\tau)}-x_{j}^{(\tau)} \rangle- f_j(x_{j}^{(\tau)}) \right)\\
			& \leq  \frac{1}{n}\sum_{j=1}^{n} \Big( \langle \nabla f_j(x_{j}^{(\tau)}),x_{i}^{(\tau)}-x_{j}^{(\tau)} \rangle-\langle \nabla f_j(x_{j}^{(\tau)}),y^{(\tau)}-x_{j}^{(\tau)} \rangle\\
			&\quad\quad\quad \quad  +\frac{L}{2}\lVert x_{i}^{(\tau)}-y^{(\tau)}+y^{(\tau)}-x_{j}^{(\tau)}  \rVert^2 \Big) \\
			& =  \frac{1}{n}\sum_{j=1}^{n} \Big( \langle \nabla f_j(x_{j}^{(\tau)}),x_{i}^{(\tau)}-y^{(\tau)} \rangle+{L}\lVert x_{i}^{(\tau)}-y^{(\tau)}  \rVert^2 \\
			& \quad \quad \quad \quad +{L}\lVert y^{(\tau)}-x_{j}^{(\tau)}  \rVert^2 \Big) \\
			& =  \left\langle \overline{g}^{(\tau)},x_{i}^{(\tau)}-y^{(\tau)} \right\rangle+{L}\lVert x_{i}^{(\tau)}-y^{(\tau)}  \rVert^2 +\frac{L}{n}\lVert \mathbf{y}^{(\tau)}-\mathbf{x}^{(\tau)}  \rVert^2,
		\end{split}
	\end{equation}
	where the two inequalities follow from Assumption \ref{lipschitzassumption}.
	When $\mathcal{X}=\mathbb{ R}^m$,
	the closed-form solutions for \eqref{primal_smooth} and \eqref{auxiliary_sequence} can be identified as
	\begin{equation*}
		x_{i}^{(\tau)}=-a{z_{i}^{(\tau)}}, \quad
		y^{(\tau)}=-a{\overline{z}^{(\tau)}},
	\end{equation*}
	implying that $y^{(\tau)}=\frac{1}{n}\sum_{i=1}^{n}x_i^{(t)}$.
	Upon summing up \eqref{unconstrained} from $i= 1$ to $i = n$, we obtain
	\begin{equation}\label{convergence:xtoy}
		\begin{split}
			\sum_{i=1}^{n}\left(f(x_{i}^{(\tau)})-f(y^{(\tau)})\right) 
			\leq 2L\lVert \mathbf{y}^{(\tau)}-\mathbf{x}^{(\tau)}  \rVert^2.
		\end{split}
	\end{equation}
	Then, we iterate \eqref{convergence:xtoy} from $\tau = 1$ to $\tau = t$ and use the convexity of $f$ to get
	\begin{equation}
		\begin{split}
			&t\sum_{i=1}^{n}\left(f(\tilde{x}_{i}^{(t)})-f(y^{(\tau)})\right) \leq  \sum_{\tau=1}^{t} \sum_{i=1}^{n}\left(f(x_{i}^{(\tau)})-f(y^{(\tau)})\right) \\
			&	\leq 2L\sum_{\tau=1}^{t}\lVert \mathbf{y}^{(\tau)}-\mathbf{x}^{(\tau)}  \rVert^2.
		\end{split}
	\end{equation}
	where $\tilde{x}_{i}^{(t)}= {t}^{-1}\sum_{\tau=1}^{t}x_i^{(\tau)}$.
	Using Lemma \ref{consensus_error_lemma}, we obtain
	\begin{equation}\label{convegence_x}
		\begin{split}
			&	t\sum_{i=1}^{n}\left(f(\tilde{x}_{i}^{(t)})-f(y^{(\tau)})\right) \\
			&	\leq  \frac{16L}{9\left(1-\rho({\bf M})\right)^2}\sum_{\tau=0}^{t-1}\lVert  {\bf y}^{(\tau+1)}-{\bf y}^{(\tau)} \rVert^2 + \frac{16\pi^2 }{9L\left(1-(\rho({\bf M}))^2\right)}.
		\end{split}
	\end{equation}
	Recall \eqref{eq:last}, we have
	\begin{align}\label{recall_convergence_result}
		& a t\left( f(\tilde{y}^{(t)})- f(x^*)\right)  
		\leq  d(x^*) +\frac{8a\pi^2 }{9nL\left(1-(\rho({\bf M}))^2\right)} \nonumber \\
		&-\left(\frac{1}{2} -aL-\frac{8aL}{9\left(1-(\rho({\bf M}))^2\right)}\right)\sum_{\tau=1}^{t}\|{y}^{(\tau)} - {y}^{(\tau-1)}\|^2 .
	\end{align}
	By multiplying $n/a>0$ on both sides of \eqref{recall_convergence_result} and adding the resultant inequality to \eqref{convegence_x}, we get
	\begin{equation}
		\begin{split}
			&	t\left( f(\tilde{x}_{i}^{(t)})-f(x^*)\right)\leq 
			t\sum_{i=1}^{n}\left(f(\tilde{x}_{i}^{(t)})-f(x^*)\right) \\
			\leq&
			-\Big( \frac{1}{2a}-{L}    -\frac{8L}{3\left(1-\rho({\bf M})\right)^2}\Big){\sum_{\tau=1}^{t}  \lVert  {\bf y}^{(\tau)}-{\bf y}^{(\tau-1)} \rVert^2}\\
			&+\frac{n}{2a}\lVert x^* \rVert^2+ \frac{8\pi^2 }{3L\left(1-(\rho({\bf M}))^2\right)}.
		\end{split}
	\end{equation}
	Upon using the condition in \eqref{step size_FDDA_unconstrained}, we arrive at \eqref{rate_FDDA_unconstrained} as desired.			
\end{proof}

\section{Proof of Theorem \ref{ADDA_thm}}
\subsection{Preliminaries}
For Algorithm \ref{ADDA}, we define 
\begin{equation*}
	{\bf u}^{(t)}=\begin{bmatrix}
		u_{1}^{(t)} \\ \vdots \\ u_{n}^{(t)}
	\end{bmatrix}, \quad
	{\bf v}^{(t)}=\begin{bmatrix}
		v_{1}^{(t)} \\ \vdots \\ v_{n}^{(t)}
	\end{bmatrix},\quad {\bf w}^{(t)}=\begin{bmatrix}
		w_{1}^{(t)} \\ \vdots \\ w_{n}^{(t)}
	\end{bmatrix},
\end{equation*}

\begin{equation*}
	{\bf q}^{(t)}=\begin{bmatrix}
		q_{1}^{(t)} \\ \vdots \\ q_{n}^{(t)}
	\end{bmatrix}, \,\,\hat{\bf \nabla}^{(t)}=\begin{bmatrix}
		{\nabla} f_1(u_{1}^{(t)})\\ \vdots \\ 	\nabla f_n(u_{n}^{(t)})
	\end{bmatrix}, 
\end{equation*}

\begin{equation*}
	\overline{u}^{(t)}=\frac{1}{n}\sum_{i=1}^{n}u_i^{(t)}, \, \overline{v}^{(t)}=\frac{1}{n}\sum_{i=1}^{n}v_i^{(t)}, \, \overline{{w}}^{(t)}=\frac{1}{n}\sum_{i=1}^{n}{  {w}}_{i}^{(t)},
\end{equation*}
\begin{equation*}
	\overline{\hat{g}}^{(t)}=\frac{1}{n}\sum_{i=1}^{n}\nabla f_i(u_{i}^{(t)}),\,
	\overline{{q}}_{t}=\frac{1}{n}\sum_{i=1}^{n}{  {q}}_{i}^{(t)} , \,  \tilde{ {\bf w}}^{(t)}= {\bf w}^{(t)}-{\bf 1}\otimes \overline{w}^{(t)},
\end{equation*}
\begin{equation*}
	\tilde{ {\bf u}}^{(t)}= {\bf u}^{(t)}-{\bf 1}\otimes \overline{u}^{(t)}, \, 	\tilde{ {\bf v}}^{(t)}= {\bf v}^{(t)}-{\bf 1}\otimes \overline{v}^{(t)},\, 	\tilde{ {\bf q}}^{(t)}= {\bf q}^{(t)}-{\bf 1}\otimes \overline{q}^{(t)}.
\end{equation*}
%\begin{equation*}
%	{\bf x}_t^{[1]}=\begin{bmatrix}
%		x_{1,t}^{[1]}\\ x_{2,t}^{[1]} \\ \vdots \\ x_{n,t}^{[1]}
%	\end{bmatrix}, {\bf x}_t^{[2]}=\begin{bmatrix}
%		x_{1,t}^{[2]}\\ x_{2,t}^{[2]} \\ \vdots \\ x_{n,t}^{[2]}
%	\end{bmatrix},
%	{\bf \nabla}_t^{[1]}=\begin{bmatrix}
%		\nabla f_1(x^{[1]}_{1,t})\\ \nabla f_2(x^{[1]}_{2,t}) \\ \vdots \\ \nabla f_n(x^{[1]}_{n,t}),
%	\end{bmatrix},
%\end{equation*}
Based on these notations, we present the steps in \eqref{ADDA_iteration} and \eqref{consensus_ADDA} in the following compact form
\begin{subequations}
	\begin{align}
		{\bf u}^{(t)} &= \frac{A_{t-1}}{A_t} \left({\bf P}   {\bf v}^{(t-1)} \right)+ \frac{a_{t}}{A_t} {\bf w}^{(t-1)}, \label{consensus_compact_u}\\
		{\bf v}^{(t)} &= \frac{A_{t-1}}{A_t}\left({\bf P}   {\bf v}^{(t-1)}\right) + \frac{a_{t}}{A_t} {\bf w}^{(t)}, \label{consensus_compact_v}\\
		{\bf q}^{(t)} &= {\bf P}   {\bf q}^{(t-1)} +  \hat{\nabla}^{(t)}-\hat{\nabla}^{(t-1)}, \label{consensus_compact_q}
	\end{align}
\end{subequations}
where ${\bf P} = P\otimes I$. According to \eqref{ADDA_iteration}, we have
\begin{subequations}\label{mean_ADDA}
	\begin{align}
		\overline{u}^{(t)}&=\frac{A_{t-1}}{A_{t}}\overline{v}^{(t-1)}+\frac{a_{t}}{A_{t}}\overline{{w}}^{(t-1)} \label{mean_ADDA_u}, \\
		\overline{v}^{(t)}&=\frac{A_{t-1}}{A_{t}}\overline{v}^{(t-1)}+\frac{a_{t}}{A_{t}}	\overline{{w}}^{(t)}\label{mean_ADDA_v}.
	\end{align}
\end{subequations}

%\begin{equation*}
%\beta+\frac{1}{t_0}\leq \frac{A_{t_0}}{a_{t_0}}\frac{a_{t_0+1}}{A_{t_0+1}}
%\end{equation*}

%whose sufficient condition can be identified as
%\begin{equation*}
%\frac{A_{t_0}}{a_{t_0}}\frac{a_{t_0+1}}{A_{t_0+1}}\leq \frac{t_0+2}{t_0+4}.
%\end{equation*}
%Then one gets
%\begin{equation*}
%\beta +\frac{1}{t_0}\leq \frac{t_0-2}{t_0}
%\end{equation*}

Before proving Theorem \ref{ADDA_thm}, we present Lemma \ref{primal_error} that establishes {upper bounds} for consensus error vectors $\tilde{\bf u}^{(t)}$ and $\tilde{\bf v}^{(t)}$. 

\begin{lemma}\label{primal_error}
	For Algorithm \ref{ADDA}, if Assumptions \ref{lipschitzassumption}, \ref{graphconnected}, and \ref{boundedness} are satisfied, then
	\begin{equation}\label{x2_consensus_error}
		\lVert \tilde{\bf v}^{(t)}  \rVert \leq\frac{a_{t}}{A_{t}}C_p, \quad	\lVert  \tilde{\bf u}^{(t)}  \rVert \leq \frac{a_{t}}{A_{t}}C_p
	\end{equation}	
	%			\begin{equation}\label{x1_consensus_error}
	%		\lVert {\bf u}^{(t)} - {\bf 1}\otimes \overline{ u}^{(t)} \rVert_2 \leq \frac{a_{t}}{A_{t}}C_p
	%		\end{equation}
	for all $t\geq 1$, where $C_p=\lceil\frac{3}{1-\beta}\rceil\sqrt{n}G$, and $\beta =\sigma_2(P)$.
\end{lemma}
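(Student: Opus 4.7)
The plan is to derive a geometric-type recursion for the consensus error $\tilde{\bf v}^{(t)}$, solve it in closed form using a convenient rescaling, and then handle $\tilde{\bf u}^{(t)}$ as a one-step propagation of the bound on $\tilde{\bf v}^{(t-1)}$.

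First, I would subtract the mean dynamics \eqref{mean_ADDA} from the per-agent recursions \eqref{consensus_compact_u}--\eqref{consensus_compact_v} to obtain clean update rules for the consensus error vectors. Letting $\mathbf{J} := \frac{1}{n}\mathbf{1}\mathbf{1}^{\mathrm{T}}\otimes I$, the identity $({\bf P}-\mathbf{J})(\mathbf{1}\otimes\overline{v}^{(t-1)}) = 0$ (which follows from the double stochasticity of $P$, exactly as in \eqref{consensus_transformation}) implies
\begin{equation*}
\tilde{\bf v}^{(t)} = \frac{A_{t-1}}{A_t}({\bf P}-\mathbf{J})\tilde{\bf v}^{(t-1)} + \frac{a_t}{A_t}\tilde{\bf w}^{(t)}, \quad \tilde{\bf u}^{(t)} = \frac{A_{t-1}}{A_t}({\bf P}-\mathbf{J})\tilde{\bf v}^{(t-1)} + \frac{a_t}{A_t}\tilde{\bf w}^{(t-1)}.
\end{equation*}

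Second, I would invoke Assumption \ref{graphconnected} to bound $\|({\bf P}-\mathbf{J})\tilde{\bf v}^{(t-1)}\| \leq \beta \|\tilde{\bf v}^{(t-1)}\|$, and Assumption \ref{boundedness} together with ${w}_i^{(t)} \in \mathcal{X}$ (a consequence of \eqref{projection_ADDA} and the definition of $\nabla d^*$) to conclude $\|\tilde{\bf w}^{(t)}\| \leq \sqrt{n}G$ since each $w_i^{(t)}-\overline{w}^{(t)}$ is a difference of elements in $\mathcal{X}$. This yields
\begin{equation*}
\|\tilde{\bf v}^{(t)}\| \leq \frac{A_{t-1}}{A_t}\beta\|\tilde{\bf v}^{(t-1)}\| + \frac{a_t}{A_t}\sqrt{n}G.
\end{equation*}

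Third, I would rescale by $A_t/a_t$ and exploit the fact that $a_t = a(t+1)$ gives $a_{t-1}/a_t = t/(t+1) \leq 1$. Defining $B_t := (A_t/a_t)\|\tilde{\bf v}^{(t)}\|$, the above becomes $B_t \leq \beta B_{t-1} + \sqrt{n}G$, which is a simple contraction with driving constant $\sqrt{n}G$. The initial condition $v_i^{(1)} = w_i^{(1)}$ gives $\tilde{\bf v}^{(1)} = \tilde{\bf w}^{(1)}$, so $B_1 = \|\tilde{\bf v}^{(1)}\| \leq \sqrt{n}G$. Unrolling the recursion and summing the geometric series yields $B_t \leq \sqrt{n}G/(1-\beta) \leq \lceil 3/(1-\beta)\rceil \sqrt{n}G = C_p$, which is the first claimed bound.

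Fourth, for $\tilde{\bf u}^{(t)}$ I would plug the just-established bound on $\|\tilde{\bf v}^{(t-1)}\|$ into the $\tilde{\bf u}^{(t)}$ recursion and again use $\|\tilde{\bf w}^{(t-1)}\| \leq \sqrt{n}G$, giving
\begin{equation*}
\|\tilde{\bf u}^{(t)}\| \leq \frac{A_{t-1}}{A_t}\beta \cdot \frac{a_{t-1}}{A_{t-1}}\cdot\frac{\sqrt{n}G}{1-\beta} + \frac{a_t}{A_t}\sqrt{n}G \leq \frac{a_t}{A_t}\left(\frac{\beta}{1-\beta}+1\right)\sqrt{n}G = \frac{a_t}{A_t}\cdot\frac{\sqrt{n}G}{1-\beta} \leq \frac{a_t}{A_t}C_p.
\end{equation*}

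The main obstacle, I expect, is the time-varying convex-combination weights $A_{t-1}/A_t$ and $a_t/A_t$ that couple consecutive iterates in a non-stationary way. The key technical insight is that scaling by $A_t/a_t$ converts the non-stationary recursion into a clean geometric contraction; the monotonicity $a_{t-1}\leq a_t$ coming from $a_t = a(t+1)$ is precisely what makes this scaling work. Verifying the initial condition $B_1 \leq \sqrt{n}G$ is a subtle but crucial step, because without $v^{(1)}_i = w^{(1)}_i$ as stated in the initialization of Algorithm \ref{ADDA}, the base case of the induction would not close.
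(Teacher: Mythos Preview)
Your argument is correct and in fact cleaner than the paper's. The paper does not use the rescaling $B_t = (A_t/a_t)\|\tilde{\bf v}^{(t)}\|$; instead it splits into two regimes. For $1\le t < \lceil 3/(1-\beta)\rceil$ it uses the crude bound $\|\tilde{\bf v}^{(t)}\|\le\sqrt{n}G$ together with $a_t/A_t\ge 1/t$ to verify the claim directly. For $t\ge\lceil 3/(1-\beta)\rceil$ it runs an induction on the un-rescaled recursion $\|\tilde{\bf v}^{(t+1)}\|\le\beta\|\tilde{\bf v}^{(t)}\|+\tfrac{a_{t+1}}{A_{t+1}}\sqrt{n}G$, which forces it to establish the auxiliary inequality $\beta+\lceil 3/(1-\beta)\rceil^{-1}\le (A_t a_{t+1})/(a_t A_{t+1})$ for large $t$; this is where the factor $3$ in $C_p$ actually enters. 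Your rescaling trick absorbs the time-varying weights via $a_{t-1}/a_t\le 1$ and converts the recursion into a stationary contraction $B_t\le\beta B_{t-1}+\sqrt{n}G$, which yields the uniform bound $B_t\le\sqrt{n}G/(1-\beta)$ in one shot and avoids both the case split and the auxiliary inequality. As a bonus, you obtain the sharper constant $\sqrt{n}G/(1-\beta)$ rather than $\lceil 3/(1-\beta)\rceil\sqrt{n}G$; the lemma's stated $C_p$ is simply a looser envelope. The only point to make explicit in a write-up is that the recursion for $\tilde{\bf u}^{(t)}$ and $\tilde{\bf v}^{(t)}$ holds for $t\ge 2$ (the algorithm loop starts there), while $t=1$ is covered by the initialization $u_i^{(1)}=x^{(0)}$ and $v_i^{(1)}=w_i^{(1)}$; you already noted the latter, and the former gives $\tilde{\bf u}^{(1)}=0$ trivially.
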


\begin{proof}[Proof of Lemma \ref{primal_error}]
	Since both $u_{i}^{(t)}, v_{i}^{(t)}, i=1,\cdots,n$, $\overline{  u}^{(t)}$ and $\overline{  v}^{(t)}$ are within the constraint set,	
	we readily have
	\begin{equation*}
		\begin{split}
			\lVert {\bf u}^{(t)}-{\bf 1}\otimes  \overline{u}^{(t)} \rVert   \leq \sqrt{n}G \,\,\,\, \textrm{and}\,\,\,\,
			\lVert {\bf v}^{(t)}-{\bf 1}\otimes  \overline{v}^{(t)} \rVert   \leq \sqrt{n}G
		\end{split}
	\end{equation*}
	by Assumption \ref{boundedness}.
	Upon using
	\begin{equation*}
		\frac{a_t}{A_t}  =\frac{2(t+1)}{t(t+3)}  \geq \frac{1}{t}, \quad \forall t\geq 1
	\end{equation*}
	and the definition of $C_p=\lceil\frac{3}{1-\beta}\rceil\sqrt{n}G$, 
	we have that \eqref{x2_consensus_error} holds for
	$
		1\leq t< \lceil\frac{3}{1-\beta}\rceil.
	$

	When $t\geq  \lceil\frac{3}{1-\beta}\rceil,$ we prove by an induction argument.
	Suppose that
	\eqref{x2_consensus_error} holds for some $t\geq \lceil \frac{3}{1-\beta}\rceil$. Next, we examine the upper bounds for $\lVert\tilde {\bf v}^{(t+1)} \rVert$ and $\lVert\tilde{\bf u}^{(t+1)} \rVert$, respectively.
	
	\emph {i) Upper bound for $\lVert\tilde {\bf v}^{(t+1)} \rVert$}. 
	%From \eqref{consensus_compact_v}, we have
	%	\begin{equation*}
	%	\overline{v}^{(t+1)}=\frac{A_{t}}{A_{t+1}}\overline{v}^{(t)}+\frac{a_{t+1}}{A_{t+1}}\overline{{w}}^{(t+1)}.
	%	\end{equation*}
	%and \eqref{consensus_compact_v}. Due to
	Using
	\begin{align*}
		&\mathbf{P}{\bf v}^{(t)}-\mathbf{1}\otimes\overline{v}^{(t)} = \left(\left(P-\frac{\mathbf{1}\mathbf{1}^{\mathrm{T}}}{n}\right)\otimes I\right)\tilde{\bf v}^{(t)},
	\end{align*}
	\eqref{consensus_compact_v} and \eqref{mean_ADDA_v},
	we obtain 
	\begin{equation*}
		\begin{split}
			\tilde{\bf v}^{(t+1)} =&\frac{A_t}{A_{t+1}}\left(\left(P-\frac{\mathbf{1}\mathbf{1}^{\mathrm{T}}}{n}\right)\otimes I\right)\tilde{\bf v}^{(t)} + \frac{a_{t+1}}{A_{t+1}}\tilde{\bf w}^{(t+1)}.
		\end{split}
	\end{equation*}
	Evaluating the norm of both sides of the above equality yields
	\begin{equation*}
		\begin{split}
			\lVert\tilde{\bf v}^{t+1} \rVert 
			=& \left\lVert\frac{A_{t}}{A_{t+1}}\left(\left(P-\frac{\mathbf{1}\mathbf{1}^{\mathrm{T}}}{n}\right)\otimes I\right)\tilde{\bf v}^{(t)}+\frac{a_{t+1}}{A_{t+1}}\tilde{\bf w}^{(t+1)}\right\rVert\\
			{\leq}& \beta\lVert \tilde{\bf v}^{(t)} \rVert+\frac{a_{t+1}}{A_{t+1}}\left\lVert \tilde{\bf w}^{(t+1)} \right\rVert\\
			%	&+\frac{a_{t+1}}{A_{t+1}}\lVert{\bf x}^{[2]}_{t}-{\bf 1}\overline{x}^{[2]}_{t} \rVert_2\lVert\hat{\bf x}_{t}-{\bf 1}\overline{\hat{x}}_{t} \rVert_2\\
			\leq & \frac{a_t}{A_t}\beta C_p + \frac{a_{t+1}}{A_{t+1}}\sqrt{n}C_p ,
			%	 \leq& \frac{a^2_{t}}{A^2_{t}}\Big(  {\beta C_p}+\sqrt{n}D\Big)^2,
		\end{split}
	\end{equation*}
	where the last inequality follows from the hypothesis that $	\lVert \tilde{\bf v}^{(t)}  \rVert \leq{a_{t}}C_p/{A_{t}}$ and  Assumption \ref{boundedness}.
	Since ${a_t}/A_t$ monotonically decreases with $t$, we have
	\begin{equation*}
		\begin{split}
			\lVert\tilde{\bf v}^{t+1} \rVert 
			\leq& \frac{a_{t}}{A_{t}}\left( \beta C_p+\sqrt{n}G\right) 
			\leq  \frac{a_{t}}{A_{t}}C_p\left( \beta+\frac{1}{\lceil\frac{3}{1-\beta}\rceil}\right) ,
			%	 \leq& \frac{a^2_{t}}{A^2_{t}}\Big(  {\beta C_p}+\sqrt{n}D\Big)^2,
		\end{split}
	\end{equation*}
	where the last inequality is due to	 $\sqrt{n}G=\frac{C_p}{\lceil\frac{3}{1-\beta}\rceil}$.
	It then remains to prove 
	\begin{equation}\label{intermediate_key}
		\left(  {\beta }+\frac{1}{\lceil\frac{3}{1-\beta}\rceil}\right)\leq \frac{A_{t}}{a_{t}}\cdot \frac{a_{t+1}}{A_{t+1}}, \quad  \forall t\geq \lceil\frac{3}{1-\beta}\rceil
	\end{equation}
	to obtain the bound for $\lVert\tilde {\bf v}^{(t+1)} \rVert$ as desired.
	To prove \eqref{intermediate_key}, we let
	\begin{equation*}
		t_0= \lceil\frac{3}{1-\beta}\rceil,
	\end{equation*}
	which implies
	\begin{equation*}
		\frac{3}{t_0} \leq 1-\beta.
	\end{equation*}
	Based on the above relation, we further obtain
	\begin{equation}\label{intermediate}
		\beta +\frac{1}{t_0}\leq \frac{t_0-2}{t_0}\leq \frac{t_0+2}{t_0+4}.
	\end{equation}
	This in conjunction with
	\begin{equation*}
		\frac{t(t+3)}{(t+1)(t+1)} \geq 1, \quad \forall t\geq 1
	\end{equation*}
	and the definitions of $a_{t}$ and $A_{t}$ yields
	\begin{equation*}
		\beta +\frac{1}{t_0}{\leq} \frac{t_0+2}{t_0+4}\cdot\frac{t_0(t_0+3)}{(t_0+1)(t_0+1)}=\frac{A_{t_0}}{a_{t_0}}\cdot \frac{a_{t_0+1}}{A_{t_0+1}}.
	\end{equation*}   
	Since
	$
	{A_{t}a_{t+1}}/{(a_{t}A_{t+1})}
	$ monotonically increases with $t$, we have \eqref{intermediate_key} satisfied.

	\emph {ii) Upper bound for $\lVert\tilde{\bf u}^{(t+1)} \rVert$}. 
	Using the same arguments as above, we have
	\begin{equation*}
		\begin{split}
			\lVert\tilde{\bf u}^{t+1} \rVert 
			= & \left\lVert\frac{A_{t}}{A_{t+1}}\left(\left(P-\frac{\mathbf{1}\mathbf{1}^{\mathrm{T}}}{n}\right)\otimes I\right)\tilde{\bf v}^{(t)}+\frac{a_{t+1}}{A_{t+1}}\tilde{\bf w}^{(t)}\right\rVert\\
			{\leq}& \beta\lVert \tilde{\bf v}^{(t)} \rVert+\frac{a_{t+1}}{A_{t+1}}\left\lVert \tilde{\bf w}^{(t)} \right\rVert\\
			%	&+\frac{a_{t+1}}{A_{t+1}}\lVert{\bf x}^{[2]}_{t}-{\bf 1}\overline{x}^{[2]}_{t} \rVert_2\lVert\hat{\bf x}_{t}-{\bf 1}\overline{\hat{x}}_{t} \rVert_2\\
			\leq & \frac{a_t}{A_t}\beta C_p + \frac{a_{t+1}}{A_{t+1}}\sqrt{n}C_p .
			%	 \leq& \frac{a^2_{t}}{A^2_{t}}\Big(  {\beta C_p}+\sqrt{n}D\Big)^2,
		\end{split}
	\end{equation*}
	By following the same line of reasoning as in the first part, we are able to obtain
	\begin{equation*}
		\lVert  \tilde{\bf u}^{(t+1)}  \rVert \leq \frac{a_{t+1}}{A_{t+1}}C_p.
	\end{equation*}
	
	%\begin{equation*}
	%\begin{split}
	%{\bf x}^{[2]}_{t+1}-{\bf x}^{[2]}_{t} &= \frac{A_{t}}{A_{t+1}}P{\bf x}^{[2]}_{t}+\frac{a_{t+1}}{A_{t+1}}\hat{\bf x}_{t}-{\bf x}^{[2]}_{t} \\
	%&=\frac{A_{t}}{A_{t+1}}\sum_{j=1}^np_{ij}\Big(x^{[2]}_{j,t} -x^{[1]}_{j,t} \Big)+\frac{A_{t}}{A_{t+1}}\sum_{j=1}^np_{ij}\Big(x^{[1]}_{i,t} -x^{[1]}_{j,t} \Big)+\frac{a_{t+1}}{A_{t+1}}\Big(\hat{x}_{i,t}-x^{[1]}_{i,t}\Big)\\
	%&=\frac{A_{t}}{A_{t+1}}\sum_{j=1}^np_{ij}\frac{a_t}{A_t}\Big(\hat{x}_{j,t} -\hat{x}_{j,t-1} \Big)+\frac{A_{t}}{A_{t+1}}\sum_{j=1}^np_{ij}\Big(x^{[1]}_{i,t} -x^{[1]}_{j,t} \Big)+\frac{a_{t+1}}{A_{t+1}}\Big(\hat{x}_{i,t}-x^{[1]}_{i,t}\Big)\\
	%\end{split}
	%\end{equation*}
	Summarizing the above bounds, the proof is completed.
\end{proof}

%\begin{lemma}\label{preservation_ADDA}
%	%	 Consider the sequence \eqref{averaging_subgradient}. 
%	For Algorithm \ref{ADDA}, we have that for any $t\geq 1$ $$
%	\overline{q}^{(t)}=\overline{\hat{g}}^{(t)}.
%	$$
%	%	\overline{s}_{t+1}=g_{t+1}.
%	%	$
%\end{lemma} 

Lemma \ref{dual_error_lemma} proves the upper bound for the consensus vector $\tilde{\bf q}^{(t)}$. 
%In particular, the errors are guaranteed to converge at an $O(\frac{1}{t})$ rate, which will be exploited in the following rate analysis of ADDA. 
\begin{lemma}\label{dual_error_lemma}
	Suppose Assumptions \ref{lipschitzassumption}, \ref{graphconnected}, and \ref{boundedness} are satisfied. For Algorithm \ref{ADDA}, we have 
	\begin{equation}\label{conservation_ADDA}
		\overline{q}^{(t)}=\overline{\hat{g}}^{(t)}
	\end{equation}
	and
	\begin{equation}\label{dual_error_ADDA}
		\lVert \tilde{\bf q}^{(t)}\rVert \leq \frac{a_{t}}{A_{t}}C_g
	\end{equation}
	for all $t\geq 1$,
	where $C_g={\lceil\frac{3}{1-\beta}\rceil L\Big( 2\sqrt{n}G+2C_p\Big)}/{(1-\beta)}$, and $\beta =\sigma_2(P)$.
\end{lemma}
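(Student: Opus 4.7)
The plan is to mirror the two–step structure of Lemma \ref{primal_error}. The conservation property \eqref{conservation_ADDA} will be derived by applying $\frac{1}{n}(\mathbf{1}^{\mathrm T}\otimes I)$ to the compact recursion \eqref{consensus_compact_q}; invoking $(A\otimes B)(C\otimes D) = AC\otimes BD$ together with the double stochasticity of $P$ (Assumption \ref{graphconnected}(i)) collapses it to $\overline{q}^{(t)} = \overline{q}^{(t-1)} + \overline{\hat{g}}^{(t)} - \overline{\hat{g}}^{(t-1)}$. An induction anchored at the initialization $q_i^{(1)} = \nabla f_i(x^{(0)}) = \nabla f_i(u_i^{(1)})$ (so that $\overline{q}^{(1)} = \overline{\hat{g}}^{(1)}$) then yields \eqref{conservation_ADDA}. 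This step essentially duplicates Lemma \ref{conservation_property}.

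For the consensus bound \eqref{dual_error_ADDA}, I would subtract $\mathbf{1}\otimes\overline{q}^{(t)}$ from both sides of \eqref{consensus_compact_q} and, using the centering identity exhibited in \eqref{consensus_transformation} together with Assumption \ref{graphconnected} and Lipschitz continuity of $\nabla f_i$ (Assumption \ref{lipschitzassumption}(iii)), obtain the scalar recursion
\begin{equation*}
\lVert\tilde{\mathbf{q}}^{(t)}\rVert \;\leq\; \beta\,\lVert\tilde{\mathbf{q}}^{(t-1)}\rVert + L\,\lVert\mathbf{u}^{(t)} - \mathbf{u}^{(t-1)}\rVert.
\end{equation*}
The crux is then to bound $\lVert\mathbf{u}^{(t)} - \mathbf{u}^{(t-1)}\rVert$ by $\tfrac{a_t}{A_t}$ times a constant so that the desired $\tfrac{a_t}{A_t}$ decay can be carried through the induction.

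For this bound I would split $\mathbf{u}^{(t)} - \mathbf{u}^{(t-1)} = (\mathbf{u}^{(t)} - \mathbf{v}^{(t-1)}) + (\mathbf{v}^{(t-1)} - \mathbf{u}^{(t-1)})$ and use the updates \eqref{consensus_compact_u}--\eqref{consensus_compact_v} to write
\begin{equation*}
\mathbf{u}^{(t)} - \mathbf{v}^{(t-1)} = \tfrac{A_{t-1}}{A_t}(\mathbf{P}-I)\tilde{\mathbf{v}}^{(t-1)} + \tfrac{a_t}{A_t}\bigl(\mathbf{w}^{(t-1)} - \mathbf{v}^{(t-1)}\bigr),
\end{equation*}
exploiting $(\mathbf{P}-I)(\mathbf{1}\otimes\bar v^{(t-1)}) = 0$ to eliminate the mean, together with
\begin{equation*}
\mathbf{v}^{(t-1)} - \mathbf{u}^{(t-1)} = \tfrac{a_{t-1}}{A_{t-1}}\bigl(\mathbf{w}^{(t-1)} - \mathbf{w}^{(t-2)}\bigr).
\end{equation*}
Applying Lemma \ref{primal_error} to $\lVert\tilde{\mathbf{v}}^{(t-1)}\rVert$, Assumption \ref{boundedness} to the two $\mathbf{w}$/$\mathbf{v}$ differences, and the (arithmetically verifiable) fact that $\tfrac{a_{t-1}}{A_{t-1}}$ is comparable to $\tfrac{a_t}{A_t}$, I would end up with $\lVert\mathbf{u}^{(t)} - \mathbf{u}^{(t-1)}\rVert \leq \tfrac{a_t}{A_t}(2\sqrt{n}G + 2C_p)$.

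I then close the induction exactly as in the proof of Lemma \ref{primal_error}. For $1 \le t < \lceil 3/(1-\beta)\rceil$ the inequality is trivial because $\tfrac{a_t}{A_t} \ge \tfrac{1}{t}$ and the prefactor $\lceil 3/(1-\beta)\rceil/(1-\beta)$ built into $C_g$ dominates; the base case $t=1$ is handled directly from $\tilde{\mathbf{q}}^{(1)} = (I - \tfrac{\mathbf{1}\mathbf{1}^{\mathrm T}}{n}\otimes I)\hat{\nabla}^{(1)}$ together with $u_i^{(1)} = x^{(0)}$ and compactness of $\mathcal{X}$. For $t\ge \lceil 3/(1-\beta)\rceil$, substituting the inductive hypothesis $\lVert\tilde{\mathbf{q}}^{(t-1)}\rVert\le\tfrac{a_{t-1}}{A_{t-1}}C_g$ into the recursion and dividing through by $\tfrac{a_t}{A_t}C_g$ reduces the goal to the scalar inequality $\beta\cdot\tfrac{A_t a_{t-1}}{a_t A_{t-1}} + \tfrac{1}{\lceil 3/(1-\beta)\rceil}\le 1$, which is precisely the type of monotonicity fact established in \eqref{intermediate_key}--\eqref{intermediate}. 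The main obstacle I expect is the $\mathbf{u}$-difference bound: pinning down the exact constant $(2\sqrt{n}G + 2C_p)$ requires tightly handling the ratios $\tfrac{a_{t-1}/A_{t-1}}{a_t/A_t}$ and carefully coupling the consensus bound of Lemma \ref{primal_error} with the raw boundedness of iterates, while the ad hoc nature of the $\mathbf{q}^{(1)}$ initialization (not produced by \eqref{consensus_ADDA}) demands a separate argument for the base case.
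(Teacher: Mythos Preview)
Your proposal is correct and follows essentially the same route as the paper: derive the conservation identity by averaging \eqref{consensus_compact_q}, obtain the contraction $\lVert\tilde{\mathbf q}^{(t)}\rVert\le\beta\lVert\tilde{\mathbf q}^{(t-1)}\rVert+L\lVert\mathbf u^{(t)}-\mathbf u^{(t-1)}\rVert$, bound the $\mathbf u$-increment by $\tfrac{a_t}{A_t}(2\sqrt nG+2C_p)$ via Lemma~\ref{primal_error} and Assumption~\ref{boundedness}, and then close with the same two-regime induction and the monotonicity fact \eqref{intermediate_key}. The only cosmetic differences are that the paper decomposes $\mathbf u^{(t)}-\mathbf u^{(t-1)}$ by inserting $\mathbf u^{(t-1)}$ (so the $(\mathbf P-I)$ term lands on $\tilde{\mathbf u}^{(t-1)}$ and the $\mathbf v^{(t-1)}-\mathbf u^{(t-1)}$ term picks up the factor $\tfrac{A_{t-1}}{A_t}\mathbf P$, which cleanly yields $\tfrac{a_{t-1}}{A_t}\le\tfrac{a_t}{A_t}$ and hence exactly $2\sqrt nG+2C_p$), and that it handles small $t$ by unrolling the recursion from the convention $\tilde{\mathbf q}^{(0)}=0$ rather than treating $t=1$ separately.
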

\begin{proof}[Proof of Lemma \ref{dual_error_lemma}]
	The proof of  \eqref{conservation_ADDA} directly follows from the proof of Lemma \ref{conservation_property}, and is omitted here for brevity.

	For \eqref{dual_error_ADDA}, we  subtract ${\bf 1}\otimes \overline{q}^{(t)}$ from both sides of \eqref{consensus_compact_q} to get
	\begin{equation}
		\begin{split}
			{\bf q}^{(t)} -{\bf 1}\otimes \overline{q}^{(t)}=& {\bf P}   {\bf q}^{(t-1)} -{\bf 1}\otimes \overline{q}^{(t-1)}\\
			& +  \hat{\nabla}^{(t)}-\hat{\nabla}^{(t-1)} - {\bf 1}\otimes (\overline{q}^{(t)}-\overline{q}^{(t-1)}). \\
		\end{split}
	\end{equation}
	Using the same procedure in \eqref{consensus_transformation} leads to
	\begin{equation}\label{def:tilde_q}
		\lVert  \tilde{\bf q}^{(t)} \rVert  \leq \beta \lVert  \tilde{\bf q}^{(t-1)} \rVert +\lVert \hat{\nabla}^{(t)}-\hat{\nabla}^{(t-1)} - {\bf 1}\otimes (\overline{q}^{(t)}-\overline{q}^{(t-1)})\rVert.
	\end{equation}		
	%			\begin{equation*}
	%				\overline{v}^{(t+1)}=\frac{A_{t}}{A_{t+1}}\overline{v}^{(t)}+\frac{a_{t+1}}{A_{t+1}}\overline{{w}}^{(t+1)}.
	%			\end{equation*}
	%	According to \eqref{ADDA_iteration}, we have
	%	\begin{equation*}
	%		\begin{split}
	%			&	\lVert	{\bf s}_{t+1}-{\bf 1}g^{[1]}_{t+1}\lVert_2\\
	%			=&\lVert P{\bf s}_{t}-{\bf 1}g^{[1]}_t+\nabla^{[1]}_{t+1}-{\bf 1}g^{[1]}_{t+1}-\nabla^{[1]}_{t}+{\bf 1}g^{[1]}_{t}\lVert_2\\
	%			\leq& \lVert P{\bf s}_{t}-{\bf 1}g^{[1]}_t\lVert_2+\lVert \nabla^{[1]}_{k}- \nabla^{[1]}_{k-1}-\mathbf{1}g^{[1]}_k+\mathbf{1}g^{[1]}_{k-1}\rVert_2
	%		\end{split}
	%	\end{equation*}
	Since the objective is smooth, we obtain
	\begin{equation*}
		\begin{split}
			&\left\lVert \hat{\nabla}^{(t)}- \hat{\nabla}^{(t-1)}-\mathbf{1}\otimes \left(\overline{q}^{(t)}-\overline{q}^{(t-1)}\right)\right\rVert\\ 
			=&    \left\lVert \hat{\nabla}^{(t)}- \hat{\nabla}^{(t-1)}-\mathbf{1}\otimes \left(\overline{\hat{g}}^{(t)}-\overline{\hat{g}}^{(t-1)}\right)\right\rVert \\ 
			= & \left\lVert \hat{\nabla}^{(t)}- \hat{\nabla}^{(t-1)}-\left(\frac{\mathbf{1}\mathbf{1}^{\mathrm{T}}}{n}\otimes I\right) \left(\hat{\nabla}^{(t)}- \hat{\nabla}^{(t-1)}\right)\right\rVert \\
			\leq &  \left \lVert\hat{\nabla}^{(t)}- \hat{\nabla}^{(t-1)} \right\rVert	\leq L\left\lVert {\bf u}^{(t)}-{\bf u}^{(t-1)} \right\rVert.
		\end{split}
	\end{equation*} 
	To bound $\left\lVert {\bf u}^{(t)}-{\bf u}^{(t-1)} \right\rVert$, we consider
	\begin{equation*}
		\begin{split}
			& {\bf u}^{(t)}-{\bf u}^{(t-1)} \\
			{	=}& \frac{A_{t-1}}{A_{t}}{\bf P}{\bf v}^{(t-1)}+\frac{a_{t}}{A_{t}}{\bf w}^{(t-1)}-{\bf u}^{(t-1)} \\
			=&\frac{A_{t-1}}{A_{t}}{\bf P}\left({\bf v}^{(t-1)} -{\bf u}^{(t-1)} \right)+\frac{A_{t-1}}{A_{t}}\left({\bf P}-I\otimes I\right) {\bf u}^{(t-1)}\\
			& +\frac{a_{t}}{A_{t}}\left({\bf w}^{(t-1)}-{\bf u}^{(t-1)}\right)
		\end{split}
	\end{equation*}
	where the first equality is due to \eqref{consensus_compact_u}. From \eqref{consensus_compact_u} and \eqref{consensus_compact_v}, we have
$
			{\bf v}^{(t-1)}-{\bf u}^{(t-1)} = \frac{a_{t-1}}{A_{t-1}}\left( {\bf w}^{(t-1)} -{\bf w}^{(t-2)}\right).
$
	In addition, we have
$
		\left({\bf P}-I\otimes I\right) {\bf u}^{(t-1)}= \left({\bf P}-I\otimes I\right) ({\bf u}^{(t-1)} - {\bf 1} \otimes \overline{u}^{(t-1)} ).
$
	Therefore, it holds that
	\begin{equation}\label{def:suc_u}
		\begin{split}
			& \left\lVert {\bf u}^{(t)}-{\bf u}^{(t-1)} \right\rVert\\
			{\leq}& \frac{A_{t-1}}{A_{t}}\frac{a_{t-1}}{A_{t-1}}\left\lVert {\bf w}^{(t-1)} -{\bf w}^{(t-2)} \right\rVert+\frac{2A_{t-1}}{A_{t}}\left\lVert\tilde{\bf u}^{(t-1)} \right\rVert\\ &+\frac{a_{t}}{A_{t}}\left\lVert{\bf w}^{(t-1)}-{\bf u}^{(t-1)}\right\rVert\\
			\leq& \frac{a_{t}}{A_{t}}\sqrt{n}G+\frac{2a_{t}}{A_{t}}C_p+\frac{a_{t}}{A_{t}}\sqrt{n}G\\
			=&\frac{a_{t}}{A_{t}}\left( 2\sqrt{n}G+2C_p\right)
		\end{split}
	\end{equation}
	where Lemma \ref{primal_error} and Assumption \ref{boundedness} are used to get the second inequality. By substituting \eqref{def:suc_u} into \eqref{def:tilde_q}, we obtain
	\begin{equation}\label{dual_induction}
		\begin{split}
			\lVert  \tilde{\bf q}^{(t)} \rVert  \leq \beta \lVert  \tilde{\bf q}^{(t-1)} \rVert +\frac{a_t}{A_t}L\left(  2\sqrt{n}G+2C_p \right).
			%			\\
			%			&\lVert{{\bf s}}_{t+1}-\mathbf{1}g^{[1]}_{t+1}\rVert_2\\
			%			%	\leq & \beta \lVert{{\bf s}}_{t}-\mathbf{1}g_{t}\rVert_2 + L\lVert {\bf x}^{[1]}_{t+1}-{\bf x}^{[1]}_{t} \rVert_2 \\
			%			\leq &  \beta \lVert{{\bf s}}_{t}-\mathbf{1}g^{[1]}_{t}\rVert_2 + L\frac{a_{t+1}}{A_{t+1}}\Big( 2\sqrt{n}G+2C_p\Big) .
			%	\leq &  \beta \lVert{{\bf s}}_{t}-\mathbf{1}g_{t}\rVert_2 + L\frac{a_{t}}{A_{t}}\Big( 2\sqrt{n}D+2C_p\Big)
		\end{split}
	\end{equation}
	By initialization, we have $\tilde{\bf q}^{(0)}=0$ and therefore
	\begin{equation*}
		\begin{split}
			\left\lVert{\tilde{\bf q}}^{(t_0)}\right\rVert
			\leq& L\left( 2\sqrt{n}G+2C_p\right)\sum_{\tau=1}^{t_0}\beta ^{t_0-\tau}
			\frac{a_{\tau}}{A_{\tau}}
			\leq  \frac{L\left( 2\sqrt{n}G+2C_p\right)}{1-\beta},
		\end{split}
	\end{equation*}
	implying that \eqref{dual_error_ADDA} is valid for 	$
	1\leq t<\lceil \frac{3}{1-\beta}\rceil$. Next, we prove that \eqref{dual_error_ADDA} also holds for $
	t\geq \lceil\frac{3}{1-\beta}\rceil$ by mathematical induction. Suppose that \eqref{dual_error_ADDA} holds true for some $t\geq \lceil \frac{3}{1-\beta}\rceil$. Using this hypothesis and \eqref{dual_induction}, we obtain
	\begin{equation*}
		\begin{split}
			\lVert  \tilde{\bf q}^{(t+1)} \rVert  \leq & \frac{a_{t}}{A_{t}}\beta C_g +\frac{a_{t+1}}{A_{t+1}}L\left(  2\sqrt{n}G+2C_p \right) \\
			\leq & \frac{a_{t}}{A_{t}}C_g\left(  {\beta }+\frac{1}{\lceil\frac{3}{1-\beta}\rceil}\right).
			%			\\
			%			&\lVert{{\bf s}}_{t+1}-\mathbf{1}g^{[1]}_{t+1}\rVert_2\\
			%			%	\leq & \beta \lVert{{\bf s}}_{t}-\mathbf{1}g_{t}\rVert_2 + L\lVert {\bf x}^{[1]}_{t+1}-{\bf x}^{[1]}_{t} \rVert_2 \\
			%			\leq &  \beta \lVert{{\bf s}}_{t}-\mathbf{1}g^{[1]}_{t}\rVert_2 + L\frac{a_{t+1}}{A_{t+1}}\Big( 2\sqrt{n}G+2C_p\Big) .
			%	\leq &  \beta \lVert{{\bf s}}_{t}-\mathbf{1}g_{t}\rVert_2 + L\frac{a_{t}}{A_{t}}\Big( 2\sqrt{n}D+2C_p\Big)
		\end{split}
	\end{equation*}
	%	\begin{equation*}
	%		\begin{split}
	%			&\lVert{{\bf s}}_{t+1}-\mathbf{1}g^{[1]}_{t+1}\rVert_2\\
	%			%	\leq & \beta \lVert{{\bf s}}_{t}-\mathbf{1}g_{t}\rVert_2 + L\lVert {\bf x}^{[1]}_{t+1}-{\bf x}^{[1]}_{t} \rVert_2 \\
	%			\leq &  \beta \frac{a_{t}}{A_{t}}C_g + L\frac{a_{t}}{A_{t}}\Big( 2\sqrt{n}G+2C_p\Big) \\
	%			\leq & \frac{a_{t}}{A_{t}}C_g\Big(  {\beta }+\frac{1}{\lceil\frac{3}{1-\beta}\rceil}\Big).
	%			%	\leq &  \beta \lVert{{\bf s}}_{t}-\mathbf{1}g_{t}\rVert_2 + L\frac{a_{t}}{A_{t}}\Big( 2\sqrt{n}D+2C_p\Big)
	%		\end{split}
	%	\end{equation*}
	Finally, using the same argument with  \eqref{intermediate_key} and \eqref{intermediate} in the proof of Lemma \ref{primal_error}, we arrive at \eqref{dual_error_ADDA} as desired.
\end{proof}

%\begin{equation*}
%\begin{split}
%\overline{x}^{[1]}_{t+1}&=\frac{A_{t}}{A_{t+1}}\overline{x}^{[2]}_{t}+\frac{a_{t+1}}{A_{t+1}}\overline{\hat{x}}_{t} \\
%\overline{x}^{[2]}_{t+1}&=\frac{A_{t}}{A_{t+1}}\overline{x}^{[2]}_{t}+\frac{a_{t+1}}{A_{t+1}}	\overline{\hat{x}}_{t+1} 
%\end{split}
%\end{equation*}
\subsection{Proof of Theorem \ref{ADDA_thm}}

\begin{proof}[Proof of Theorem \ref{ADDA_thm}]
	Using $A_{\tau-1}=A_{\tau}-a_\tau$, we have
	\begin{equation*}
		\begin{split}
			&A_{t}\Big(f(\overline{v}^{(t)})-f(x^*)\Big) \\
			=& \sum_{\tau=1}^{t}\left( A_{\tau} f(\overline{v}^{(\tau)})-A_{\tau-1}f(\overline{v}^{(\tau-1)})\right) -\sum_{\tau=1}^{t}a_\tau f(x^*)\\
			=& \sum_{\tau=1}^{t}\Big(A_{\tau} \left(f(\overline{v}^{(\tau)})- f(\overline{u}^{(\tau)})\right)+a_{\tau}\left(f(\overline{u}^{(\tau)})-f(x^*)\right)\\
			&\quad \quad \,\, +A_{\tau-1}\left(f(\overline{u}^{(\tau)})-f(\overline{v}^{(\tau-1)})\right) \Big)\\
		\end{split}
	\end{equation*}
	Upon using the convexity of $f$, we obtain
	\begin{equation*}
		\begin{split}
			&A_{t}\left(f(\overline{v}^{(t)})-f(x^*)\right) \\
			\leq &\sum_{\tau=1}^{t}\Big(A_{\tau} \left(f(\overline{v}^{(\tau)})- f(\overline{u}^{(\tau)})\right)+a_{\tau}\left\langle \nabla f(\overline{u}^{(\tau)}), \overline{u}^{(\tau)} -x ^* \right\rangle\\
			&\quad \quad \,\,+A_{\tau-1}\left\langle\nabla f(\overline{u}^{(\tau)}), \overline{u}^{(\tau)}-\overline{v}^{(\tau-1)}\right\rangle \Big)
		\end{split}
	\end{equation*}
	By \eqref{mean_ADDA_v},
	%	\begin{equation*}
	%		A_\tau\overline{v}^{(\tau)}={A_{\tau-1}}\overline{v}^{(\tau-1)}+{a_{t}}	\overline{w}^{(\tau)}.
	%	\end{equation*}
	%	Therefore we are able to obtain
	we obtain
	\begin{equation}\label{ADDA_rate_intermediate}
		\begin{split}
			&A_{t}\Big(f(\overline{v}^{(t)})-f(x^*)\Big) \\
			\leq& \sum_{\tau=1}^{t}A_\tau\Big( f(\overline{v}^{(\tau)})- f(\overline{u}^{(\tau)})+\left\langle\nabla f(\overline{u}^{(\tau)}), \overline{u}^{(\tau)}-\overline{v}^{(\tau)}\right\rangle \Big)\\
			&+\sum_{\tau=1}^{t}a_{\tau}\left\langle \nabla f(\overline{u}^{(\tau)}), \overline{w}^{(\tau)} -x^*  \right\rangle \\
			{\leq} & \sum_{\tau=1}^{t}\frac{A_\tau L}{2}\underbrace{\lVert \overline{u}^{(\tau)}-\overline{v}^{(\tau)} \rVert^2}_{(\uppercase\expandafter{\romannumeral1})}+\frac{1}{n}\sum_{i=1}^{n}\underbrace{\sum_{\tau=1}^{t}a_{\tau}\left\langle q_{i}^{(\tau)}, {w}_{i}^{(\tau)} -x^*  \right\rangle}_{(\uppercase\expandafter{\romannumeral2})}\\
			& +\frac{1}{n}\sum_{\tau=1}^{t}\underbrace{\sum_{i=1}^{n}a_{\tau}\left\langle \nabla f(\overline{u}^{(\tau)}) -q_{i}^{(\tau)}, {w}_{i}^{(\tau)} -x^*  \right\rangle}_{(\uppercase\expandafter{\romannumeral3})}
		\end{split}
	\end{equation}
	where the last inequality is due to the smoothness of $f$.
	To bound $(\uppercase\expandafter{\romannumeral1})$, we consider
	\begin{equation}\label{d}
		\begin{split}
			&\lVert \overline{u}^{(\tau)}-\overline{v}^{(\tau)} \rVert^2\\
			=&\frac{1}{n}\sum_{i=1}^{n}\left\lVert \overline{u}^{(\tau)}-{u}^{(\tau)}_{i}+{u}^{(\tau)}_{i}-{v}^{(\tau)}_{i}+{v}^{(\tau)}_{i}-\overline{v}^{(\tau)} \right \rVert^2\\
			\leq & \frac{1}{n}\sum_{i=1}^{n}\Big(3\left\lVert \overline{u}^{(\tau)}-{u}^{(\tau)}_{i} \right\rVert^2+3\left\lVert {u}^{(\tau)}_{i}-{v}^{(\tau)}_{i} \right\rVert^2\\
			&\quad \quad \quad +3\left\lVert {v}^{(\tau)}_{i}-\overline{v}^{(\tau)} \right\rVert^2\Big) \\
			\leq &  \left(\frac{a_{\tau}}{A_{\tau}} \right)^2\frac{ 6C_p^2+{3}\left\lVert {\bf w}^{(\tau)}-{\bf w}^{(\tau-1)} \right\rVert^2}{n}
		\end{split}
	\end{equation}
	where the first inequality follows from
	$
		\lVert x+y+z\rVert^2 \leq	3\lVert x\rVert^2+	3\lVert y\rVert^2+	3\lVert z\rVert^2,
	$
	and the  last inequality is due to Lemma \ref{primal_error} and \eqref{mean_ADDA}.
	For $(\uppercase\expandafter{\romannumeral2})$, by letting $\zeta^{(\tau)}=q_{i}^{(\tau)}$ and $\nu^{(\tau)}={  w}_{i}^{(\tau)}$ in Lemma \ref{inexact_dual_averaging_inter}, we have
	\begin{equation}\label{e}
		\begin{split}
			\sum_{\tau=1}^{t}a_\tau\left\langle  q_{i}^{(\tau)},{w}_{i}^{(\tau)}-x^*\right \rangle \leq d(x^*)-\sum_{\tau=1}^{t}\frac{1}{2}\lVert  {w}_{i}^{(\tau)}-{w}_{i}^{(\tau-1)} \rVert^2.
		\end{split}
	\end{equation}
	To bound $(\uppercase\expandafter{\romannumeral3})$, we use \eqref{conservation_ADDA} to get
	\begin{equation*}
		\begin{split}
			&a_{\tau}\left\langle \nabla f(\overline{u}^{(\tau)})-q_{i}^{(\tau)}, {{w}}_{i}^{(\tau)} -x^*  \right\rangle\\
			\leq & Ga_\tau\left\lVert\nabla f(\overline{u}^{(\tau)})-\overline{\hat{g}}^{(\tau)}+\overline{q}^{(\tau)}-q_{i}^{(\tau)} \right\rVert \\
			\leq & Ga_\tau  \left(\left\lVert\nabla f(\overline{u}^{(\tau)})-\overline{\hat{g}}^{(\tau)} \right\rVert+ \left\lVert \overline{q}^{(\tau)}-q_{i}^{(\tau)} \right\rVert\right).
			%\\
			%=& D\sum_{i=1}^{n} a_k \Big( \lVert \frac{1}{n}\nabla f(\overline{x}^{[1]}_{k})-g_k \rVert_2+ \frac{a_k}{A_k}C_g\Big)
		\end{split}
	\end{equation*}
	Upon using Lemma \ref{primal_error}, we obtain
	\begin{equation*}
		\begin{split}
			&\left\lVert \nabla f(\overline{u}^{(\tau)})-\overline{\hat{g}}^{(\tau)} \right\rVert \leq  \frac{1}{n}\sum_{i=1}^{n}\left\lVert \nabla f_i(\overline{u}^{(\tau)})-\nabla f_i(u_{i}^{(\tau)})\right\rVert\\
			\leq & \frac{L}{n}\sum_{i=1}^{n}\left\lVert \overline{u}^{(\tau)}-u_{i}^{(\tau)}\right\rVert  \leq  L \sqrt{   \frac{\lVert \tilde{\bf u}\rVert^2}{n}} 
			\leq  \left(\frac{a_\tau}{A_\tau}\right)\frac{LC_p}{\sqrt{n}}.
		\end{split}
	\end{equation*}
	Recall Lemma  \ref{dual_error_lemma} that
	$\left\lVert \overline{q}^{(\tau)}-q_{i}^{(\tau)} \right\rVert\leq {C_ga_{\tau}}/(\sqrt{n}A_\tau)$.  Therefore
	\begin{equation}\label{f}
		\begin{split}
			&	\sum_{i=1}^{n}a_{\tau}\left\langle \nabla f(\overline{u}^{(\tau)})-q_{i}^{(\tau)}, {{w}}_{i}^{(\tau)} -x^*  \right\rangle\\
			&\leq  \left(\frac{a_\tau^2}{A_\tau} \right) {\sqrt{n}G(LC_p+C_g)}.
		\end{split}
	\end{equation}
	Finally, by collectively substituting \eqref{d}, \eqref{e}, and \eqref{f} into \eqref{ADDA_rate_intermediate}, we get
	\begin{equation*}
		\begin{split}
			&A_{t}\left(f(\overline{v}^{(t)})-f(x^*)\right) \\
			{\leq} & \left(\frac{G(LC_p+C_g)}{\sqrt{n}}+\frac{3LC_p^2}{n}\right)\sum_{\tau=1}^{t}\frac{a_{\tau}^2}{A_{\tau}} \\
			&+d(x^*)+\frac{1}{2n}\sum_{\tau=1}^{t}\left(\frac{3La_{\tau}^2}{A_{\tau}}-1\right)\left\lVert  {\bf w}^{(\tau)}-{\bf w}^{(\tau-1)} \right\rVert^2.
		\end{split}
	\end{equation*}
	Based on the condition in \eqref{ADDA_condition} and the  fact that ${a_\tau^2}/{A_\tau}\leq 2a$, we obtain \eqref{ADDA_rate} as desired.
	
	The inequality in \eqref{consensus_error_ADDA} directly follows from Lemma \ref{primal_error}.
	%\begin{equation*}
	%\begin{split}
	%&f(\overline{  x}^{[2]}_t)-f(x^*)\\
	%\leq& \frac{nd(x^*)}{A_t}+\Big(\frac{D(LC_p+nC_g)}{\sqrt{n}L(1+2\gamma)}+\frac{(2+\gamma+\frac{3}{\gamma})C_p^2}{2(1+2\gamma)}\Big)\frac{t}{A_t}.
	%\end{split}
	%\end{equation*}
\end{proof}

		%				\begin{equation}
		%	\begin{split}
		%	&	\lVert{{\bf s}}_t-\mathbf{1}g_t\rVert_2^2\\
		%	\leq& L^2\Big( 2\sqrt{n}D+2C_p\Big)^2\Big(\sum_{k=1}^{t}\beta ^{t-k}
		%	\frac{a_k}{A_k}\Big)^2\\
		%	\leq &L^2\Big( 2\sqrt{n}D+2C_p\Big)^2\sum_{k=1}^{t}\Big(\frac{\beta ^{\frac{t-k}{2}}}{\sqrt{A_k}}\Big)^2\sum_{k=1}^{t}
		%	\Big(	\frac{\beta ^{\frac{t-k}{2}}a_k}{\sqrt{A_k}}\Big)^2
		%	\end{split}
		%	\end{equation}

		%	\begin{equation*}
		%	\lVert {\bf x}^{[1]}_{t+1}-{\bf x}^{[1]}_{t}\lVert=\lVert{\bf x}^{[1]}_{t+1}-{\bf 1}\overline{ x}^{[1]}_{t+1}+{\bf 1}\overline{ x}^{[1]}_{t}-{\bf x}^{[1]}_{t}+{\bf 1}\overline{ x}^{[1]}_{t+1}-{\bf 1}\overline{ x}^{[1]}_{t}\rVert
		%	\end{equation*}
%	which together with Lemma \ref{gamma_continuity} yields \eqref{consensus_error}. 
	
%\end{proof}

% you can choose not to have a title for an appendix
% if you want by leaving the argument blank

% use section* for acknowledgment
%\section*{Acknowledgment}
%
%
%The authors would like to thank...

% Can use something like this to put references on a page
% by themselves when using endfloat and the captionsoff option.
\ifCLASSOPTIONcaptionsoff
  \newpage
\fi

% trigger a \newpage just before the given reference
% number - used to balance the columns on the last page
% adjust value as needed - may need to be readjusted if
% the document is modified later
%\IEEEtriggeratref{8}
% The "triggered" command can be changed if desired:
%\IEEEtriggercmd{\enlargethispage{-5in}}

% references section
%\bibliography{ifacconf}     
% can use a bibliography generated by BibTeX as a .bbl file
% BibTeX documentation can be easily obtained at:
% http://mirror.ctan.org/biblio/bibtex/contrib/doc/
% The IEEEtran BibTeX style support page is at:
% http://www.michaelshell.org/tex/ieeetran/bibtex/
\bibliographystyle{IEEEtran}
% argument is your BibTeX string definitions and bibliography database(s)
\bibliography{barejrnl.bib}

\end{document}